\documentclass[10pt]{article}
\pagestyle{myheadings}
\usepackage{amsmath,amsfonts}
\usepackage[mathscr]{eucal}
\usepackage{amssymb,amsmath}
\usepackage{latexsym}
\usepackage{amsfonts}
\usepackage{amscd}
\usepackage{amsthm}
\usepackage{graphics}
\usepackage{epsfig}
\usepackage{esint}
\usepackage{ulem}
\usepackage{hyperref}

\newcommand {\localToGlobalR}{r} 

\newcommand {\justB}{B} 

\newcommand{\Rzresc}[1]{}               

\newcommand{\Rresc}[1]{}                    
\newcommand{\Rrescb}{R}
\newcommand{\Rrescbp}[1]{R^{#1}}                    

\newcommand {\alphawedgeone}{{\alpha}} 
\newcommand {\thmConst}{{\kappa}} 
\newcommand {\injRad}{r_U} 
\newcommand {\chartMap}{F}
\newcommand {\distortion}{{\rm Distortion}}

\newcommand {\efpoly}{{P_1}}
\newcommand {\gefpoly}{{P_2}}
\newcommand {\ggefpoly}{{P_3}}

\newcommand {\localPK}{{{\tilde K}}} 
\newcommand {\globalEF}{{\varphi}}
\newcommand {\globalPEF}{{\tilde\varphi}}
\newcommand {\globalEV}{{\lambda}}
\newcommand {\globalPEV}{{\tilde\lambda}}
\newcommand {\localEF}{{\xi}}
\newcommand {\localPEF}{{\tilde\xi}}   
\newcommand {\localEV}{{\mu}}
\newcommand {\localPEV}{{\tilde\mu}}   
\newcommand {\localTangentBall}{{B_\Rrescb(z)}} 
\newcommand {\localgquadratic}{g_\justB}  
\newcommand {\localgquadraticr}{g_\justB}  
\newcommand {\assumptionA}{{\bf A.1}}

\newcommand {\grad}{\nabla}

\newcommand {\R}{\mathbb {R}}

\newcommand {\cD}{\mathcal {D}}

\newcommand {\cM}{\mathcal{M}}

\newcommand {\E}{\mathbb {E}}

\newcommand {\suml}{\sum\limits}

\newcommand{\pid}{(4\pi)^{-d\over 2}}

\newcommand{\vol}[1]{{|#1|}}
\newcommand{\dist}{{\rm dist}}
\newcommand{\ball}{{B}}

\newcommand{\half}{{1 \over 2}}

\newcommand{\dir}{p}

\newcommand{\efk}{{\beta_{loc}}}
\newcommand{\Cweyl}{{C_{count}}}

\newtheorem{theorem}{Theorem}[subsection]
\newtheorem{lemma}[theorem]{Lemma}

\newtheorem{proposition}[theorem]{Proposition}
\newtheorem{remark}[theorem]{Remark}
\newtheorem{example}[theorem]{Example}

\numberwithin{equation}{subsection}

\setlength{\oddsidemargin}{0.5in}
\setlength{\textwidth}{\paperwidth}
    \addtolength{\textwidth}{-2.5in}
\setlength{\topmargin}{0in}
\setlength{\textheight}{\paperheight}
    \addtolength{\textheight}{-2in}
    \addtolength{\textheight}{-\headheight}
    \addtolength{\textheight}{-\headsep}
\setlength{\footskip}{0.5in}
\setlength{\marginparsep}{1em}
\setlength{\marginparwidth}{0.6in}

\newlength{\originalbase}
\setlength{\originalbase}{\baselineskip}



\begin{document}
%
%

\title{Universal Local Parametrizations via Heat Kernels and Eigenfunctions of the Laplacian}

\author{
    Peter W Jones \footnote{Department of Mathematics, Yale University, 10 Hillhouse Ave, New Haven, CT, 06510, U.S.A., +1-(203)-432-1278}
 \and
    Mauro Maggioni \footnote{Department of Mathematics, Duke University, BOX 90320, Durham, NC, 27708, U.S.A., +1-(919)-660-2825}
 \and
    Raanan Schul \footnote{Department of Mathematics, UCLA, Box 951555 Los Angeles, CA 90095-1555, U.S.A., +1-(310)-825-3855}
 }

\maketitle

\begin{abstract}
We use heat kernels or eigenfunctions of the Laplacian to construct local coordinates
on large classes of Euclidean domains and Riemannian manifolds (not necessarily smooth, e.g. with $\mathcal{C}^\alpha$ metric).
These coordinates are bi-Lipschitz on embedded balls of the domain or manifold, with
distortion constants   that depend only on natural geometric properties of the domain or manifold.
The proof of these results relies on  estimates, from above and below, for the heat kernel and its gradient,
as well as for the eigenfunctions of the Laplacian and their gradient.  These estimates  hold in the non-smooth category,
and are stable with respect to perturbations within this category.
Finally, these coordinate systems are intrinsic and efficiently computable, and are of value in applications.
\end{abstract}


\tableofcontents

\section{Introduction}

The concept of a coordinate chart for a manifold is quite old, but it has only recently become a subject of intensive study for data sets.
In this paper we will state and prove a new theorem for coordinate charts on Riemannian manifolds.
This result is meant to explain the empirically observed robustness of certain coordinate charts for data sets,
Before stating our results, we  explain in more detail the setting, first for manifolds, and then for data sets.

Let $\cM$ be a Riemannian manifold.
A coordinate chart (more precisely, a restriction of one) can be viewed as a mapping from a  metric ball
$B\subset\cM$ into $\R^d$, where $d$ is the topological dimension of $\cM$.
This mapping has the form
\begin{equation*}
F(x)=\big(f_1(x), f_2(x),...,f_d(x)\big)\,.
\end{equation*}
It is natural to ask for $F$ to have low distortion.
Let $F(B)=\tilde B\subset \R^d$.
By assumption $F$ is a one to one mapping from $B$ to $\tilde B$.
The Lipschitz norm of $F$ is defined as
\begin{equation*}
\|F\|_{Lip}=\sup\limits_{x,y \in B \atop x\neq y} \frac{\|F(x)-F(y)\|}{d_{\cM}(x,y)}
\end{equation*}
where $d_{\cM}(\cdot,\cdot)$ is the metric on $\cM$ and
$\|\cdot\|$ is the usual Euclidean metric on $\R^d$.
Similarly, one sets
\begin{equation*}
\|F^{-1}\|_{Lip}=\sup\limits_{ x,y \in B \atop x\neq y} \frac{d_{\cM}(x,y)}{\|F(x)-F(y)\|}\,.
\end{equation*}
Then the distortion of $F$ on $B$ is defined to be
\begin{equation}
\distortion(F,B):=\|F\|_{Lip}\times \|F^{-1}\|_{Lip}\,.
\end{equation}

It is worth recalling at this point a prime example of a coordinate chart, namely the coordinate chart on a simply connected planar domain $\mathcal{D}$ given by a Riemann mapping $F$ from $\mathcal{D}$ to the unit disc $\mathbb{D}$.
Let $z_0\in \mathcal{D}$ and define $r=\dist(z_0,\partial \mathcal{D})$.
If we choose our Riemann map $F$ to satisfy $F(z_0)=0$, then
the distortion theorems of classical complex analysis
(see e.g. \cite{pommerenke} page 21)
state that $F$ maps the disc
$\ball(z_0,\frac{r}2)$ onto "almost" the unit disc, with low distortion:
\begin{equation}\label{e:distortion-estimates-1}
\ball(0,\thmConst^{-1})\subset F\big(\ball(z_0,\frac{r}2)\big) \subset \ball(0,1-\thmConst^{-1})\,,
\end{equation}
\begin{equation}\label{e:distortion-estimates-2}
\distortion\big(F,\ball(z_0,\frac{r}2)\big)\leq \thmConst\,.
\end{equation}
In other words, on $\ball(z_0,\frac{r}2)$, $F$ is a perturbation (in the proper sense) of the linear map
given by $z\to F'(z_0)(z-z_0)$, and $|F'(z_0)| \sim \frac1r$.

In this paper we will look for an analogue of \eqref{e:distortion-estimates-1} and \eqref{e:distortion-estimates-2} above, but in the setting of  Riemannian manifolds.
We will show that on Riemannian manifolds of finite volume there is a {\bf locally} defined $F$ that
has these properties, and that this choice of $F$ will come from {\bf globally}  defined Laplacian eigenfunctions.
On a metric embedded ball $B\subset \cM$ we will choose {\bf global} Laplacian eigenfunctions
$\globalEF_{i_1}, \globalEF_{i_2},...,\globalEF_{i_d}$ and constants
$\gamma_1,\gamma_2,...,\gamma_d\leq \thmConst$ (for a universal constant $\thmConst$) and define
\begin{equation}
\Phi:=\big( \gamma_1 \globalEF_{i_1}, \gamma_2\globalEF_{i_2},...,\gamma_d\globalEF_{i_d}\big)\,.
\end{equation}
This choice of $\Phi$, depending heavily on $z_0$ and $r$, is globally defined,
and on $\ball(z_0,\thmConst^{-1} r)$ enjoys the same properties as the Riemann map does in \eqref{e:distortion-estimates-1} and \eqref{e:distortion-estimates-2}.
In other words, $\Phi$ maps $\ball(z_0,\thmConst^{-1} r)$ to, roughly, a ball of unit size, with low distortion.
Here we should point out the 1994 paper of
B\'erard et al.\cite{BBG:EmbeddingRiemannianManifoldHeatKernel} where a weighted infinite sequence of eigenfunctions is shown to provide a global coordinate system (points in the manifold are mapped to $\ell_2$). To our knowledge this was the first result of this type in Riemannian geometry.
Our results can be viewed as a strengthening of their work, and have as a consequence the statement that for a compact manifold without boundary, a good global coordinate system is given by the eigenfunctions $\globalEF_j$ with eigenvalues $\globalEV_j<\thmConst R_{\rm inj}^{-2}$.
Here $R_{\rm inj}$   is the inradius of $\cM$, i.e. the largest $r>0$ such that for all $x\in \cM$,  $\ball(x,r)$ is an embedded ball.

The impetus for this paper and its results comes from certain recent results in the analysis of data sets.
A recurrent idea is to approximate a data set, or a portion of it, lying in high dimensional space, by a manifold of low dimension, and find
a parametrization of such data set or manifold. This process sometimes goes under the name of manifold learning, or linear or nonlinear dimensionality reduction.
This type of work has been in part motivated by spectral graph theory \cite{Chung} and spectral geometry \cite{Cheeger70,GHL_RiemannianGeometry,Donnelly-Fefferman} (and references therein).
Let $\{x_j\}_1^N$ be a collection of data points in a  metric space $\mathbb{X}$.
It is frequently quite difficult to extract any information from the data as it is presented.
One solution is to embed the points in $\R^n$ for $n$ perhaps quite large, and then use linear
methods (e.g. those using singular value decomposition) to obtain a dimensional reduction of the data set.
In certain situations however linear methods are insufficient.
For this reason, there has recently been great interest in nonlinear methods
\footnote{
Examples of such disparate applications include document analysis \cite{CMDiffusionWavelets}, face recognition \cite{niyogi},
clustering \cite{ng01spectral,BN:SpectralTechniquesEmbeddingClustering},
machine learning \cite{NB,NMB,SMC:GeneralFrameworkAdaptiveRegularization,smmm:ValueFunction,smkfsomm:SimLearningReprControlContinuous,MSCB:MultiscaleManifoldMethods},
nonlinear image denoising and segmentation \cite{shi-malik:pami,SMC:GeneralFrameworkAdaptiveRegularization},
processing of articulated images \cite{DG_HessianEigenmaps}, cataloguing of galaxies \cite{donoho2}, pattern analysis of brain potentials
\cite{pclo} and EEG data \cite{Shen:fMRIDiffusionMaps}, and the study of brain tumors \cite{ganesh}.
A variety of algorithms for manifold learning have been proposed \cite{RSLLE,BN,NB,LThesis,CLAcha1,DiffusionPNAS,DiffusionPNAS2,isomap,ZhaZha,DG_HessianEigenmaps,Weinberger:MaximumVarianceUnfolding,ZhaZha,Saul:LearningKernelMatrixNonlineadDimReduction,TSL,Saul:AnalysisExtensionSpectralMethods,Saul:SpectralMethodsDimReduction}.
}.
Unfortunately such techniques seldomly come with guarantees on their capabilities of indeed finding local parametrization (but see, for example,
\cite{DoGri:WhenDoesIsoMap,DG_HessianEigenmaps,TSL}), or on quantitative statements
on the quality of such parametrizations.

One of these methods, diffusion geometry, operates by first defining a kernel $K(x_j,x_k)$
on the data set, and then altering this slightly to obtain a self-adjoint matrix $(m_{j,k})$ that roughly corresponds to the generator of a diffusion process.
The eigenvectors of the matrix, should be seen as corresponding to Laplacian eigenfunctions on a  manifold.
One (judiciously) selects a collection $v_{i_1}, v_{i_2},...,v_{i_m}$ of eigenvectors and maps
\begin{equation}
x_k\to (v_{i_1}, v_{i_2},...,v_{i_m})\in\R^m
\end{equation}
Careful choices of collections of eigenvectors have been empirically observed to give excellent representations
of the data in a  very low dimensional Euclidean space. What has been  unclear is why this method should prove so successful.
Our results show that in the case of Riemannian manifolds, one can prove that this philosophy is not just correct,
but also robust.
It is to be said that researchers so far have restricted their attention to the case when the lowest frequency
eigenfunctions are selected, i.e. $i_1=1,i_2=2,\dots,i_m=m$
\cite{Spielman:SpectralPartitioningWorks,BNeigenmaps,NB,DiffusionMaps,DiffusionPNAS,CKLMN:DiffusionMapsReductionCoordinates}.

Given these results, it is plausible to guess that an analogous result should hold for a local piece of a data set if that piece has in some sense a ``local dimension'' approximately $d$.
There are certain difficulties with this philosophy.
The first is that graph eigenfunctions are global objects and any definition of  ``local dimension'' may change from point to point in the data set.
A second difficulty is that our results for manifolds depend on classical estimates for eigenfunctions.
This smoothness may be lacking in graph eigenfunctions.

It turns out that another of our manifold results does not suffer from these serious problems when working on a data set.
We introduce simple ``heat coordinate'' systems on manifolds.
Roughly speaking (and in the language of the previous paragraph) these are $d$ choices of manifold heat kernels that form a robust coordinate system on $\ball(z_0,\thmConst^{-1}r)$.
We call this method ``heat triangulation''  in analogy with triangulation as practiced in surveying, cartography, navigation, and modern GPS.
Indeed our method is a simple translation of these classical triangulation methods, and has a closed formula on $\R^d$, which we note has infinite volume! (Our result on heat kernels  makes no assumptions on the volume of the manifold.)
For data sets, {\it heat triangulation} is a much more stable object than  eigenfunction coordinates because:
\begin{itemize}
\item Heat kernels are local objects (see e.g. Proposition \ref{p:LocalToGlobalHeatKernel})
\item If a manifold $\mathcal{M}$ is approximated by discrete sets $X$, the corresponding graph heat kernels converge rather nicely to the manifold heat kernel.
This is studied for example in \cite{LThesis,CLAcha1,CLAcha2,BN}.
\item One has good statistical control on smoothness of the heat kernel, simply because one can easily examine it and because one can use the Hilbert space
$\{f\in L^2:\grad f\in L^2\}$.
\item Our results that use eigenfunctions rely in a crucial manner on Weyl's Lemma, whereas  heat kernel  estimates do not.
\end{itemize}
In a future paper we will return to applications of this method to data sets.

The philosophy used in this paper is as follows.
\begin{itemize}
\item[Step 1.] Find suitable points $y_j$, $1\leq j\leq d$ and a time $t$ so that the mapping given by heat kernels
$( x\to K_t(x,y_1),..., K_t(x,y_d) )$ is a good local coordinate system on $\ball(z,\thmConst^{-1}r)$. (This is heat triangulation.)
\item[Step 2.] Use Weyl's Lemma to find suitable eigenfunctions $\globalEF_{i_j}$ so that
(with $K_j(x)=K_t(x,y_j)$) one has large gradient.
\end{itemize}
Each point $y\in \cM$ gives rise to a heat kernel $K_t(x,y)$.
One may think of Step 1 as sampling this family of heat kernels $K_t(x,y)$ at
$d$ different choices $y_1,...,y_d$.
Indeed, with high probability, randomly chosen points from the appropriate annulus will be suitable.
Step 2 corresponds to sampling the vector $\{\globalEF_j(x)e^{\lambda_jt}\}_j$ $d$ times, once for each point
$y_1,...,y_d$.
This last sampling, where we choose an index $j$, cannot be performed randomly! (See example in section \ref{s:localized-ef}).

At this point we would like to note an advantage that local parametrization by eigenfunctions has over heat kernel triangulation (which we do not discuss in this paper).
Consider  the planar domain $[0,3\epsilon]\times [0,3]$. Then, using only two Neumann eigenfunctions, one gets a good parametrization of  the rectangle
$[\epsilon,2\epsilon]\times[1,2]$.
On the other hand, in order to get parametrization of similar distortion using heat kernel triangulation, on needs to use $\sim \frac1\epsilon$ different heat kernels.

To see where our philosophy comes from, we return for a moment to the setting of a simply connected planar  domain $\cD$ of area $=1$.
Let $z_0\in \cD$ and $r$ be as in the discussion before equation \eqref{e:distortion-estimates-1}.
With the choice of Riemann mapping $F$, with $F(z_0)=0$ we have  the classical formula known to Riemann:
\begin{equation}
F(z)={\rm exp}\big\{-G(z,z_0)- iG^*(z,z_0)\big\}\,.
\end{equation}
Here $G(\cdot,z_0)$ is Green's function for the domain $\cD$, with pole at $z_0$, and $G^*$ is the multivalued conjugate of $G$.
Thus, all information about $F$ on $\ball(z_0,\frac{r}2)$ is encoded in $G(z,z_0)$.
Recall that
\begin{equation}
G(z,z_0)=\int\limits_0^\infty K(z,z_0,t)dt\,,
\end{equation}
where $K$ is the (Dirichlet) heat kernel for $\cD$.
Thus the behavior of $F$ can be read off the information on $K(z,z_0,t)$.
Now write,
\begin{equation}
K(z,z_0,t)=\sum\limits_{j=1}^\infty \globalEF_j(z) \globalEF_j(z_0) e^{\globalEV_jt}
\end{equation}
where $\{\globalEF_j\}$ is the collection of Dirichlet eigenfunctions (normalized to have $L^2$ norm $=1$) and $\Delta \globalEF_j= \globalEV_j\globalEF_j$.
Notice that
\begin{equation}
|F'(z)|=|\grad G(z,z_0)|e^{-G(z,z_0)}.
\end{equation}
Since $|F'(z)|\sim \frac1r$ on $\ball(z_0,\frac{r}2)$ it is reasonable to guess from the above identities
that there are eigenfunctions $\globalEF_j$ such that
\begin{equation}
|\grad \globalEF_j|\gtrsim \frac1r
\end{equation}
on $\ball(z_0,\thmConst^{-1} r)$, for some $\thmConst>1$, independent of $\cD$.
(More precisely, a short calculation with Weyl's estimates makes this reasonable.)
This simple reasoning turns out to be correct and the main idea of this paper.
The proof does not depend on any properties of holomorphic functions, but runs with equal ease in any dimension.
This is because it  only requires estimates on the heat kernel, Laplacian eigenfunctions and their derivatives, all of which are real variable objects.

\medskip

The paper is organized in a top-bottom fashion, as follows. In Section \ref{s:results} we state the main results, in Section \ref{s:proofs-of-EF-thms} we present the main Lemmata,
the proofs of the main results, and important estimates on the heat kernel and eigenfunctions of the Laplacian, together with their proofs, but mostly only in the Euclidean case. For the purpose of completeness we have recorded here proofs of several known estimates, over which the experts may wish to skip. In Section \ref{s:supplementalmanifoldcase} we present the material for generalizing most estimates to the manifold case.
Finally, we discuss some examples in Section \ref{s:examples}.
We include a {\it Table of notation} at the end of the manuscript, see Section \ref{appendix-table-notation}.

\section{Results}
\label{s:results}
\allowdisplaybreaks

\subsection{Euclidean domains}
We first present  the case of Euclidean domains.
While our results in this setting follow from the more general results for manifolds discussed in the next section,
the case of Euclidean domains is of independent interest, and the exposition of the main result as well as the proof in this case is simpler in the several technical respects.

We consider the heat equation in $\Omega$, a finite volume domain in $\mathbb{R}^d$, with either Dirichlet or Neumann boundary conditions i.e., respectively,
\begin{gather*}
\begin{array}{ccc}
\begin{cases}
(\Delta -{\partial\over\partial t})u(x,t)=0\\
u|_{\partial\Omega} = 0
\end{cases}
&
\textrm{ or }
&\begin{cases}
(\Delta -{\partial\over\partial t})u(x,t)=0\\
\partial_\nu u|_{\partial\Omega} = 0
\end{cases}.
\end{array}
\end{gather*}
Here  $\nu$ is the outer normal on $\partial\Omega$.
Independently of the boundary conditions, $\Delta$ denotes the Laplacian on $\Omega$.
In this paper we restrict our attention to domains where the spectrum is discrete and the corresponding heat kernel can be written as
\begin{gather}\label{e:heat-kernel-spectral-expansion}
K_t(z,w)=K^\Omega_t(z,w)=\sum_{j=0}^{+\infty}\globalEF_j(z)\globalEF_j(w)e^{-\globalEV_j t}\,.
\end{gather}
where the $\{\globalEF_j\}$ form an orthonormal basis of eigenfunctions of $\Delta$,  with eigenvalues $0\le\globalEV_0\le\dots\le\globalEV_j\le\dots$.
We also require a (non-asymptotic)  Weyl-type  estimate:
there is a constant $\Cweyl$
such that for any $T>0$
\begin{equation}\label{e:weyl-bounds-omega}
\#\{j:0<\globalEV_j\leq T\}\leq \Cweyl T^\frac{d}2|\Omega|\,.
\end{equation}
In the Dirichlet case  $\Cweyl$ does not depend on $\Omega$ (see remark \ref{r:Weyl-constant}).
For the Dirichlet case the only substantial problem is that the eigenfunctions may fail to vanish at the boundary.  This in turn only occurs if there are boundary points where the Wiener series (for the boundary) converges \cite{Wiener:Series,Kellogg:PotentialTheory}.
For the Neumann case the situation is more complicated
\cite{Safarov:WeylAsymptoticFormula,
Simon:NeumannEssentialSpectrum,
Netrusov:SharpRemainderEstimatesNeumannPlanar}.
In particular there are  domains with arbitrary closed continuous Neumann spectrum \cite{Simon:NeumannEssentialSpectrum}.
We therefore restrict ourselves in this paper to domains (and, later, manifolds) where conditions
\eqref{e:heat-kernel-spectral-expansion} and \eqref{e:weyl-bounds-omega} are valid.
More general boundary conditions can be handled in similar fashion, since our analysis is local and depends on the boundary conditions only through
the properties above.

Finally,  here and throughout the manuscript, we define $\fint\limits_B f :=\frac{1}{\vol{B}}\int\limits_B f$.


\begin{theorem}[Embedding via Eigenfunctions, for Euclidean domains]
\label{t:datheorem}
Let $\Omega$ be a finite volume domain in $\mathbb{R}^d$, rescaled so that $|\Omega|=1$.
Let $\Delta$ be the Laplacian in $\Omega$, with Dirichlet or Neumann boundary conditions, and assume that \eqref{e:heat-kernel-spectral-expansion}
and \eqref{e:weyl-bounds-omega} hold.
Then is a  constant $\thmConst>1$  that depends only on $d$
such that the following hold.

For any $z\in\Omega$, let $\rho\le \mathrm{dist}\,(z,\partial\Omega)$.
Then there exist integers $i_1,\dots,i_d$ such that,
if we let $$\gamma_{l}=\left(\fint\limits_{\ball(z,\thmConst^{-1}\rho)}\varphi_{i_l}^2\right)^{-\frac12}\ ,\  l=1,\dots,d\,,$$ we have that:
\begin{itemize}
\item[(a)] the map
\begin{eqnarray}
\Phi : \ball(z,\thmConst^{-1}\rho) &\rightarrow& \mathbb{R}^d\\
x&\mapsto& (\gamma_1\globalEF_{i_1}(x),\dots,\gamma_d\globalEF_{i_d}(x))
\end{eqnarray}
satisfies, for any $x_1,x_2\in\ball(z,\thmConst^{-1}\rho)$,
\begin{equation}
\frac{\thmConst^{-1}}{\rho}||x_1-x_2||\le||\Phi(x_1)-\Phi(x_2)||\le \frac{\thmConst}{\rho}||x_1-x_2||\,;
\label{e:bilip_bound_euclidean}
\end{equation}
\item[(b)] the associated eigenvalues satisfy $$\thmConst^{-1} \rho^{-2}\le \globalEV_{i_1},\dots,\globalEV_{i_d}\le \thmConst \rho^{-2}\,;$$
\item[(c)] the constants $\gamma_l$ satisfy $$\gamma_1,\dots,\gamma_d\le \thmConst\, (\Cweyl)^\frac12.$$
\end{itemize}
\end{theorem}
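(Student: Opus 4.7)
The plan follows the two-step philosophy set out in the introduction: first establish a local chart by heat triangulation at scale $\rho$, and then transfer this chart to a few well-chosen eigenfunctions using the spectral expansion \eqref{e:heat-kernel-spectral-expansion} together with the Weyl count \eqref{e:weyl-bounds-omega}.

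For \textbf{Step 1 (heat triangulation)} I would fix $t = c\rho^2$ for a small absolute constant $c > 0$ and select $d$ points $y_1, \dots, y_d$ in an annulus $\mathrm{Annulus}(z, c_1\rho, c_2\rho) \subset \Omega$ such that the unit vectors $(z-y_j)/|z-y_j|$ form a well-conditioned basis of $\R^d$ (e.g.\ approximately orthogonal). The standard Gaussian upper and lower bounds on $K_t$ and its spatial gradient at scales $|z-y_j| \sim \sqrt{t} \sim \rho$ show that $\nabla_x K_t(x, y_j)$, for $x$ near $z$, is approximately a large positive multiple of $(z-y_j)/|z-y_j|$, of size $\sim \rho^{-d-1}$. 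Combining over $j$ gives the map $x \mapsto (K_t(x, y_j))_j$ a well-conditioned Jacobian at $z$, which by gradient continuity (heat kernel regularity in $x$) persists on $\ball(z, c_3\rho)$ for small $c_3$, producing a bi-Lipschitz heat-kernel chart.

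For \textbf{Step 2 (passage to eigenfunctions)}, I would expand
\[
\nabla_x K_t(x, y_j) = \sum_k e^{-\globalEV_k t}\,\globalEF_k(y_j)\,\nabla\globalEF_k(x),
\]
and split the sum at $\globalEV_k \leq T$ versus $\globalEV_k > T$ with $T = C\rho^{-2}$. The exponential factor $e^{-c\,\globalEV_k\rho^2}$ makes the high-frequency tail negligible compared with the $\rho^{-d-1}$ size of the full gradient, while by \eqref{e:weyl-bounds-omega} the low-frequency part contains at most $\Cweyl T^{d/2} \lesssim \rho^{-d}$ terms. A greedy/pigeonhole selection then yields $d$ indices $i_1,\dots,i_d$ whose gradients $\nabla\globalEF_{i_l}(z)$, after suitable rescaling, still span $\R^d$ with bounded condition number inherited from the heat-triangulation basis. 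The normalization $\gamma_l = (\fint_{\ball(z,\thmConst^{-1}\rho)} \globalEF_{i_l}^2)^{-1/2}$ rescales each eigenfunction so that its typical $L^2$ size on the small ball is $\rho^{-d/2}$; interior elliptic regularity / Bernstein-type estimates for eigenfunctions (yielding $|\nabla \globalEF_{i_l}| \lesssim \sqrt{\globalEV_{i_l}}$ times a local $L^\infty$ norm) then upgrade the pointwise gradient control at $z$ to the uniform bound \eqref{e:bilip_bound_euclidean} on $\ball(z, \thmConst^{-1}\rho)$. The eigenvalue bounds (b) follow automatically: much smaller eigenvalues cannot provide gradients at the $\rho^{-1}$ scale, while larger ones are cut off by the exponential factor. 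The bound in (c) follows from comparing the number of selected indices with the Weyl count over the relevant annular window of eigenvalues.

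The \textbf{main obstacle} is the selection argument in Step 2. After choosing $\globalEF_{i_1},\dots,\globalEF_{i_{l-1}}$, one must exhibit a further low-frequency eigenfunction whose gradient at $z$ has a substantial component orthogonal to the previously chosen ones, with constants depending only on $d$ and $\Cweyl$. The heat-triangulation basis guarantees that the $d$ vectors $\nabla_x K_t(z, y_j)$ span $\R^d$ well-conditionedly, so each of them contributes to the missing direction; since that vector is a sum of $\lesssim \rho^{-d}$ terms, by the triangle inequality some term must carry an appreciable fraction of the missing component, producing the next index. The delicate point is to propagate this pointwise-at-$z$ information into a uniform Jacobian bound over $\ball(z,\thmConst^{-1}\rho)$, which is exactly where eigenfunction gradient estimates (and the universal dependence of the constants on $d$ and $\Cweyl$) become essential.
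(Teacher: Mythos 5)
Your overall architecture matches the paper's: heat triangulation at scale $\rho$, truncation of the spectral expansion to frequencies $\globalEV_j\sim\rho^{-2}$ using the Gaussian decay and Weyl's bound, a pigeonhole to select $d$ eigenfunctions, and H\"older continuity of eigenfunction gradients (Proposition \ref{p:PhiLinftyEstimates}) to upgrade a pointwise Jacobian lower bound at $z$ to the uniform bi-Lipschitz bound \eqref{e:bilip_bound_euclidean}. Your inductive orthogonalization (picking $p_{k+1}$ orthogonal to $\nabla\globalEF_{j_1}(z),\dots,\nabla\globalEF_{j_k}(z)$) is also exactly what the paper does.

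However, there is a genuine gap in the selection step. You propose to expand $\partial_q K_t(z,y_j)=\sum_k e^{-\globalEV_kt}\globalEF_k(y_j)\,\partial_q\globalEF_k(z)$, observe that the truncated sum has $\lesssim \Cweyl\rho^{-d}$ terms, and conclude ``by the triangle inequality some term must carry an appreciable fraction.'' This pigeonhole gives a lower bound on the \emph{unnormalized} product $|\globalEF_k(y_j)|\,|\partial_q\globalEF_k(z)|\gtrsim(\Cweyl\rho)^{-1}$, but what the theorem requires is the \emph{normalized} bound $\gamma_k|\partial_q\globalEF_k(z)|\gtrsim\rho^{-1}$, where $\gamma_k^{-1}=(\fint_{\ball(z,\thmConst^{-1}\rho)}\globalEF_k^2)^{1/2}$. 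Converting one to the other would need $|\globalEF_k(y_j)|\lesssim\gamma_k^{-1}$ up to a \emph{small} constant, and the elliptic estimate only gives $|\globalEF_k(y_j)|\lesssim(\fint_{\ball_{R}(y_j)}\globalEF_k^2)^{1/2}$, which is not controlled by the local average around $z$; an eigenfunction can be large at $y_j$ and small near $z$. Indeed, if you instead combine the pigeonhole with the upper bounds $|\globalEF_k(y_j)|\lesssim\gamma_k^{-1}$ and $|\partial_q\globalEF_k(z)|\lesssim\rho^{-1}\gamma_k^{-1}$ you only recover part (c), $\gamma_k\lesssim(\Cweyl)^{1/2}$, without any lower bound on $\gamma_k|\partial_q\globalEF_k(z)|$. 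The device that closes this gap in the paper is the set $\Lambda_E$ of ``well-normalized'' eigenfunctions (those with $\gamma_j|\partial_p\globalEF_j(z)|\geq c_0\rho^{-1}$, equation \eqref{d:Lambda-E-definitions}): Lemma \ref{l:truncated_truncated_kernel} shows, via Cauchy--Schwarz and the on-diagonal heat-kernel bound, that discarding the eigenfunctions \emph{outside} $\Lambda_E$ changes the truncated expansion of $\partial_p K_t$ by at most a factor $O(c_0)$, which is absorbable. Only after restricting to $\Lambda_L\cap\Lambda_H\cap\Lambda_E$ does the pigeonhole (Lemma \ref{l:bigpartials}) simultaneously deliver the lower bound $\gamma_j|\partial_p\globalEF_j(z)|\gtrsim\rho^{-1}$ (automatic from membership in $\Lambda_E$) and the bound $\gamma_j\lesssim(\Cweyl)^{1/2}$ (from Cauchy--Schwarz plus Weyl). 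You correctly flagged the selection as the ``main obstacle,'' but your proposed resolution (triangle inequality over $\rho^{-d}$ terms) is not enough; you would also have to locate the $\Lambda_E$-type preliminary pruning, or something equivalent, to make the argument go through.
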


\begin{remark}
In item (c) above, it will also be the case that $\thmConst^{-1} \rho^{\frac d2}\le \gamma_j$.
\end{remark}

\begin{remark}
The dependence on $\Cweyl$ is only needed in the Neumann case because, unlike the Dirichlet case, the upper bound in Weyl's Theorem depends on the domain. See Remark \ref{r:Weyl-constant} for a more precise statement.
\end{remark}

\subsection{Manifolds with $\mathcal{C}^{\alpha}$ metric}
\label{s:manifold}

The results above can be extended to certain classes of manifolds.
In order to formulate a result corresponding to Theorem \ref{t:datheorem} we must first carefully define the manifold analogue of $\dist(z,\partial\Omega)$.
Let $\mathcal{M}$ be a smooth, $d$-dimensional compact manifold,
possibly with boundary. Suppose we are given a metric tensor $g$ on
$\mathcal{M}$ which is $\mathcal{C}^\alpha$ for some $\alpha\in(0,1]$.
For any $z_0\in\mathcal{M}$, let $(U,\chartMap)$ be a coordinate chart such that $z_0\in U$ and normalized so that
\begin{itemize}
\item[(i)] $g^{il}(\chartMap(z_0))=\delta^{il}$.
\end{itemize}
Then we assume that
\begin{itemize}
\item[(ii)] for any $x\in U$, and any $\xi,\nu\in\mathbb{R}^d$,
\begin{equation}
c_{\min}(g)||\xi||_{\mathbb{R}^d}^2\le \sum_{i,j=1}^d g^{ij}(\chartMap(x))\xi_i \xi_j\,\,\,\,\mathrm{and }\,\,\,\,\sum_{i,j=1}^d g^{ij}(\chartMap(x))\xi_i\nu_j\le c_{\max}(g)||\xi||_{\mathbb{R}^d}\,||\nu||_{\mathbb{R}^d}\,.
\label{e:uniformlyelliptic}
\end{equation}
\end{itemize}
We let
\begin{equation}
\injRad(z_0) = \sup \{r>0 : B_r(\chartMap(z_0))\subseteq \chartMap(U)\}\,.
\label{e:rM}
\end{equation}
Observe that, when $g$ is at least $\mathcal{C}^2$,  $\injRad$ can be taken to be less than the inradius, with local coordinate chart $(U,\chartMap)$  given by the exponential map at $z$.
The chart $(U,\chartMap)$ may intersect the boundary with no consequence, as all of the work will be done inside  $\ball(z_0,\injRad)$.
We denote by $\|g\|_{\alphawedgeone}$ the maximum over all $i,j$ of
$$\sup\limits_{x\neq y} \frac{|g^{ij}(\chartMap(x))-g^{ij}(\chartMap(y))|}{|\chartMap(x)-\chartMap(y)|^{\alphawedgeone}}$$
for $x,y$ in $U$.
The natural volume measure $d\mu$ on the manifold is given, in any local chart, by $\sqrt{\mathrm{det}\,g}\,$; conditions \eqref{e:uniformlyelliptic} guarantee in particular that $\mathrm{det}g$ is uniformly bounded below from $0$.
Let $\Delta_{\mathcal{M}}$ be the Laplace Beltrami operator on $\mathcal{M}$. In a local chart, we have
\begin{equation}
\Delta_{\mathcal{M}} f(x) = -\frac{1}{\sqrt{\mathrm{det}\,g}}\sum_{i,j=1}\partial_j\left(\sqrt{\mathrm{det}\,g}\,g^{ij}(\chartMap(x))\partial_i f\right)(\chartMap(x))\,,
\label{e:DeltaM}
\end{equation}
when $g$ is smooth enough (e.g. $g\in\mathcal{C}^1$). In general one defines the Laplacian through its associated quadratic form \cite{DaviesHeatKernels,Davies:SpectraPropertiesChangesMetric}.
Conditions \eqref{e:uniformlyelliptic} are the usual uniform ellipticity conditions for the operator \eqref{e:DeltaM}.
With Dirichlet or Neumann boundary conditions, $\Delta_{\mathcal{M}}$ is self-adjoint on $L^2(\mathcal{M},\mu)$.  We will assume that the spectrum is discrete, denote by $0\le\globalEV_0\le\dots\le\globalEV_j\le$ its eigenvalues and by $\{\globalEF_j\}$ the corresponding orthonormal basis of eigenfunctions, and write equations \eqref{e:heat-kernel-spectral-expansion} and \eqref{e:weyl-bounds-omega} with $\Omega$ replaced by $\mathcal{M}$.


\begin{theorem}[Embedding via Eigenfunctions, for Manifolds]
\label{t:datheoremmanifold}
Let $(\mathcal{M},g)$, $z\in\mathcal{M}$ be a $d$ dimensional manifold and $(U,\chartMap)$ be a chart as above. Assume $|\mathcal{M}|=1$.
There is a  constant $\thmConst>1$,  depending on $d$, $c_{\min}$, $c_{\max}$, $||g||_{\alphawedgeone}$, ${\alphawedgeone}$,
such that the following hold.

Let $\rho\leq\injRad(z)$.
Then there exist integers $i_1,\dots,i_d$ such that  if we let
$$\gamma_{l}=\left(\fint\limits_{\ball(z,\thmConst^{-1}\rho)}\varphi_{i_l}^2\right)^{-\frac12}\ ,\  l=1,\dots,d\,,$$
we have that:
\begin{itemize}
\item[(a)] the map
\begin{eqnarray}
\Phi : \ball(z,\thmConst^{-1}\rho) &\rightarrow& \mathbb{R}^d\\
x&\mapsto& (\gamma_1\globalEF_{i_1}(x),\dots,\gamma_d\globalEF_{i_d}(x))
\end{eqnarray}
satisfies for any $x_1,x_2\in\ball(z,\thmConst^{-1}\rho)$
\begin{equation}
\frac{\thmConst^{-1}}{\rho}\,d_{\mathcal{M}}(x_1,x_2)\le ||\Phi(x_1)-\Phi(x_2)||\le \frac{\thmConst}{\rho}\,d_{\mathcal{M}}(x_1,x_2)\,.
\label{e:bilip_bound_manifold}
\end{equation}
\item[(b)] the associated eigenvalues satisfy $$\thmConst^{-1}\rho^{-2}\le \globalEV_{i_1},\dots,\globalEV_{i_d}\le \thmConst\rho^{-2}\,.$$
\item[(c)] the constants $\gamma_l$ satisfy $$\gamma_1,\dots,\gamma_d\le \thmConst (\Cweyl)^\frac12\,.$$
\end{itemize}
\end{theorem}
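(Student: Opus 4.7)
The plan is to carry out the two-step strategy sketched in the introduction (heat triangulation, then eigenfunction selection via Weyl's bound), with all local analysis performed in the chart $(U,\chartMap)$. In this chart the operator $\Delta_{\cM}$ becomes a uniformly elliptic divergence-form operator with $\mathcal{C}^{\alphawedgeone}$ coefficients on $\chartMap(U)\subset\R^d$, whose ellipticity and regularity constants are controlled by $c_{\min}(g),c_{\max}(g),\|g\|_{\alphawedgeone},\alphawedgeone$. Aronson-type parabolic theory together with parabolic Schauder estimates then yield, at time $t=c\rho^2$ for $c$ small, Gaussian upper bounds for $K_t(x,y)$ and $|\grad_x K_t(x,y)|$, matching Gaussian lower bounds on scales $d_{\cM}(x,y)\sim\rho$, and $\mathcal{C}^{\alphawedgeone}$ H\"older control of $\grad_x K_t$ in the spatial variable.

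\emph{Heat triangulation.} I would choose $d$ points $y_1,\dots,y_d\in \ball(z,2\rho)\setminus \ball(z,\rho)$ whose directions from $z$ form a well-separated simplex. For such $y$, $\grad_x K_t(z,y)$ is approximately a positive scalar of order $\rho^{-d-1}$ times the unit vector $(z-y)/|z-y|$, so the Jacobian at $z$ of the map $\Psi(x):=(K_t(x,y_1),\dots,K_t(x,y_d))$ has smallest singular value $\gtrsim \rho^{-d-1}$. H\"older continuity of $\grad K_t$ propagates this non-degeneracy uniformly over $\ball(z,\thmConst^{-1}\rho)$ for $\thmConst$ sufficiently large, and after a scalar normalization $\Psi$ becomes bi-Lipschitz on this ball with constants of order $\rho^{-1}$.

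\emph{Eigenfunction selection.} Expand $K_t(x,y_j)=\sum_k\globalEF_k(x)\globalEF_k(y_j)e^{-\globalEV_k t}$ and truncate at $T=\thmConst\rho^{-2}$. The tail contributes $o(\rho^{-1})$ to the $W^{1,\infty}$ norm of $\Psi$ on $\ball(z,\thmConst^{-1}\rho)$, using the Weyl bound \eqref{e:weyl-bounds-omega}, the on-diagonal estimate $\sum_k\globalEF_k(x)^2 e^{-\globalEV_k t}\lesssim t^{-d/2}$ (which controls $\|\globalEF_k\|_{L^\infty}$ for $\globalEV_k\leq T$), and Schauder bounds on $\grad \globalEF_k$. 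Hence the truncated map $\Psi_N$ remains bi-Lipschitz of order $\rho^{-1}$. Writing its differential as $A\cdot D\Theta(x)$ with $\Theta(x)=(\globalEF_k(x))_{\globalEV_k\leq T}$ and $A_{jk}=\globalEF_k(y_j)e^{-\globalEV_k t}$, a Cauchy--Binet expansion over the $O(\Cweyl\rho^{-d})$ retained indices forces some $d$-tuple $I=\{i_1,\dots,i_d\}$ for which the minor $\det(A_I\,D\Theta_I(x))$ is bounded below, so the rescaled map $(\gamma_1\globalEF_{i_1},\dots,\gamma_d\globalEF_{i_d})$ provides the desired $\Phi$.

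\emph{Normalizations and the main obstacle.} Item (b) is immediate on the upper side from $\globalEV_{i_l}\leq T$, and on the lower side from the fact that a gradient lower bound $\gtrsim \rho^{-1}$ on $\ball(z,\thmConst^{-1}\rho)$ combined with the Poincar\'e inequality forces $\globalEV_{i_l}\gtrsim\rho^{-2}$. For (c), the on-diagonal heat bound gives $\|\globalEF_k\|_{L^\infty}^2\lesssim\rho^{-d}$ for $\globalEV_k\leq T$, while the gradient lower bound forces $\fint_{\ball(z,\thmConst^{-1}\rho)}\globalEF_{i_l}^2\gtrsim \Cweyl^{-1}$, with the extra $\Cweyl$ factor coming from the Neumann dependence of the on-diagonal constant. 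The hard part will be the uniform version of the selection step: Cauchy--Binet readily produces \emph{some} $d$-tuple with a large minor at the single point $z$, but one must extract a single index set $I$ whose minor remains $\gtrsim \rho^{-d}$ throughout $\ball(z,\thmConst^{-1}\rho)$. This is where $\mathcal{C}^{1,\alphawedgeone}$ Schauder control on each retained eigenfunction at scale $\rho$ becomes crucial, since it bounds the fluctuation of the candidate minors and allows the combinatorial loss from the $O(\Cweyl\rho^{-d})$ choices to be absorbed into the universal constant $\thmConst$.
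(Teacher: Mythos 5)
Your Step 1 (heat triangulation, via Proposition \ref{p:non-smooth-kernel_estimates}) matches the paper's opening move, and your truncation at scale $T\sim\rho^{-2}$ is also in line with Lemma \ref{l:truncated_kernel}. The divergence is in the eigenfunction selection step, and there I believe your route has a real gap, not merely the spatial-uniformity issue you flag.

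\textbf{The Cauchy--Binet pigeonhole loses a catastrophic power of $\rho$.} You write $D\Psi(x)=A\,D\Theta(x)$ with $A$ of size $d\times N$, $N\sim\Cweyl\rho^{-d}$, and propose to extract a good $d$-tuple $I$ from $\det(D\Psi)=\sum_{|I|=d}\det(A_I)\det((D\Theta)_I)$. The determinant on the left is $\sim\rho^{-d(d+1)}$, while the number of terms is $\binom{N}{d}\sim(\Cweyl\rho^{-d})^d$. Pigeonhole therefore gives one $I$ with $|\det(A_I)||\det((D\Theta)_I)|\gtrsim\rho^{-d}\Cweyl^{-d}$; this is the \emph{average} term. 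But this bounds the product, not the eigenfunction minor alone, and the two factors have vastly different possible sizes: $\|\globalEF_k\|_{L^\infty(B)}\lesssim\rho^{-d/2}$ gives only $|\det(A_I)|\lesssim\rho^{-d^2/2}$, so the selected $I$ could have $|\det((D\Theta)_I)|\gtrsim\rho^{-d+d^2/2}\Cweyl^{-d}$. After multiplying by $\prod_{k\in I}\gamma_k\gtrsim\rho^{d^2/2}$, the lower bound on $|\det(\gamma_I(D\Theta)_I)|$ becomes $\gtrsim\rho^{d^2-d}\Cweyl^{-d}$, which is off by the factor $\rho^{d^2}$ from the required $\rho^{-d}$. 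The maximal Cauchy--Binet term can be as large as $\rho^{-d^2-d}$, a factor $\rho^{-d^2}$ above the average, so the naive pigeonhole does not reach the right scale, and I do not see how H\"older control of the minors (your proposed fix for the ``hard part'') can recover a lost power of $\rho$ that diverges as $\rho\to0$.

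\textbf{What the paper does instead.} Rather than extracting a $d$-tuple at once, the paper runs a one-direction-at-a-time pigeonhole: Lemma \ref{l:truncated_truncated_kernel} discards eigenfunctions with small $|\partial_p\globalEF_j(z)|$ relative to their local $L^2$ average, and Lemma \ref{l:bigpartials} applies a single pigeonhole to the bound
\begin{equation*}
t^{-d/2}\Bigl(\tfrac{R_z^2}{t}\Bigr)^2 \lesssim \sum_{\globalEV_j\in\Lambda}\fint_{\ball(z,\frac12\delta_0 R_z)}|\globalEF_j|^2\,,
\end{equation*}
over the $\lesssim\Cweyl\rho^{-d}$ indices in $\Lambda$, each term being $\lesssim\rho^{-d}$. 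This loses only a single factor of $\Cweyl$, giving $\gamma_{j}\lesssim\Cweyl^{1/2}$. Having found $\globalEF_{j_1}$ with $|\partial_{p_1}\globalEF_{j_1}|\gtrsim\rho^{-1}\gamma_{j_1}^{-1}$, the paper then picks $p_2\perp\grad\globalEF_{j_1}(z)$ and repeats; by construction $\partial_{p_2}\globalEF_{j_1}\equiv0$ at $z$, so $j_2\neq j_1$ and, inductively, the matrix $\bigl(\gamma_m\partial_{p_n}\globalEF_{j_m}(z)\bigr)_{m,n}$ is lower triangular with diagonal entries $\gtrsim\rho^{-1}$. The bi-Lipschitz lower bound then comes directly from this triangular structure plus the H\"older control of Proposition \ref{p:PhiLinftyEstimates}, with no combinatorial loss at all. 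This inductive selection is the key idea your sketch is missing.

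\textbf{A second gap, in item (b).} Your lower bound on $\globalEV_{i_l}$ via a gradient lower bound and Poincar\'e does not reach $\rho^{-2}$: with $\fint_{\ball(z,\thmConst^{-1}\rho)}\globalEF_{i_l}^2\gtrsim\Cweyl^{-1}$ and $|\grad\globalEF_{i_l}|\gtrsim\rho^{-1}\gamma_{i_l}^{-1}$ on a small ball, you only get $\globalEV_{i_l}=\int_\cM|\grad\globalEF_{i_l}|^2\ge\int_{B}|\grad\globalEF_{i_l}|^2\gtrsim\rho^d\cdot\rho^{-2}\Cweyl^{-1}=\rho^{d-2}\Cweyl^{-1}$, because the $L^2$ normalization lives on all of $\cM$ while the gradient lower bound lives only on a ball of volume $\sim\rho^d$. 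The paper instead excludes eigenvalues below $A't^{-1}$ at the truncation stage (Lemma \ref{l:truncated_kernel}(ii), via the $\Lambda_H(A')$ restriction), so that the selected indices automatically satisfy $\globalEV_{i_l}\ge A'/t\sim\rho^{-2}$.

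In short: the heat-triangulation and truncation steps in your sketch are aligned with the paper, but the passage from heat kernels to a $d$-tuple of eigenfunctions is the heart of the proof, and the Cauchy--Binet route does not, as written, deliver $\rho$-independent constants. You should replace it with the paper's inductive, direction-by-direction pigeonhole, and obtain the eigenvalue lower bound by pruning low frequencies in the truncation rather than by a Poincar\'e argument.
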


\begin{remark}
As in the Euclidean case, in item (c) above, it will also be the case that $\thmConst^{-1} \rho^{\frac d2}\le \gamma_j$
\end{remark}

\begin{remark}\label{r:rem-1}
Most of the proof is done on the local chart $(U,\chartMap)$ containing $z$.
An inspection of the proof shows that we use only the norm $\|g\|_{\alphawedgeone}$ of the $g$ restricted to this chart.
\end{remark}

\begin{remark}\label{r:hoelder-norm}
When rescaling Theorem \ref{t:datheoremmanifold}, it is important to note that if $f$ is a H\"older function with $\|f\|_{\mathcal{C}^{\alphawedgeone}}=A$ and
$f_r(z)=f(r^{-1}z)$ then $\|f_r\|_{\mathcal{C}^{\alphawedgeone}}=Ar^{\alphawedgeone}$.
Since we will have $r<1$, $f_r$ satisfies a better H\"older estimate then $f$, i.e.
$$\|f_r\|_{\mathcal{C}^{\alphawedgeone}}=Ar^{\alphawedgeone}< A=\|f\|_{\mathcal{C}^{\alphawedgeone}}\,.$$
We will repeatedly use this observation when discussing manifolds with $\mathcal{C}^\alpha$ metric.
\end{remark}

\begin{remark}
We do not know, in both Theorem  \ref{t:datheorem} and Theorem \ref{t:datheoremmanifold}, whether it is possible to choose eigenfunctions such that $\gamma_1\sim\gamma_2\sim...\sim\gamma_d$.
If this were so, the map
$x\mapsto (\globalEF_{i_1}(x),\dots,\globalEF_{i_d}(x))$
would be a low distortion map whose image has diameter $\geq \thmConst^{-1}$.
\end{remark}

\begin{remark}
As was noted by L. Guibas, when $\cM$ has a boundary, in the case of Neumann boundary values,
one may consider the  ``doubled" manifold, and may apply our result for a possibly larger  $\injRad(z)$.
\end{remark}

Clearly Theorem \ref{t:datheorem} is a particular case of Theorem \ref{t:datheoremmanifold}, but the proof of the former is significantly easier in that one can use standard estimates on eigenfunctions of the Laplacian and their derivatives.
For the sake of presentation we present one proof for both Theorems, but two sets of required Lemmata for those estimates which are significantly different
in the two cases.

\begin{remark}
The method of the proofs also gives a result independent of the constant $\Cweyl$:
Let $(\mathcal{M},g)$ and $z\in\mathcal{M}$   be  as in
Theorem \ref{t:datheoremmanifold}.
Let $\eta>0$, and assume that for any $x\in \mathcal{M}$ we have a chart $(U,\chartMap)$
such that $\injRad(x)\geq \eta>0$
(in particular, $\mathcal{M}$ has no boundary).
Then for $\rho\leq \eta$ the same results as in Theorem \ref{t:datheoremmanifold} hold, except the   constant $\thmConst$ depends only on  $d$, $c_{\min}$, $c_{\max}$, $||g||_{\alphawedgeone}$, ${\alphawedgeone}$ and not on  $\Cweyl$.
This is due to the fact that $\Cweyl$ becomes universal for values of $T>\eta^{-2}$.
\end{remark}

Another, in some sense stronger, result is true.  One may replace the $d$ eigenfunctions in Theorem \ref{t:datheoremmanifold} by $d$ heat kernels $\{K_t(z,y_i)\}_{i=1,...,d}$.
In fact such heat kernels arise naturally in the main steps of the proofs of Theorem  \ref{t:datheorem} and Theorem \ref{t:datheoremmanifold}.
This leads to an embedding map with even stronger guarantees:

\begin{theorem}[Heat Triangulation Theorem]
\label{t:heatkernelmapping}
Let $(\mathcal{M},g)$, $z\in\mathcal{M}$ and $(U,\chartMap)$ be as above, with the exception we now make no assumptions on the finiteness of the volume of $\cM$ and the existence of $\Cweyl$.
Let $\rho\leq \injRad(z)$.
Let $p_1,...,p_d$ be $d$ linearly independent directions.
There are constants $c>0$ and $c',\thmConst>1$, depending on
$d$, $c_{\min}$, $c_{\max}$, $\rho^{\alphawedgeone}||g||_{\alphawedgeone}$, ${\alphawedgeone}$,
and the smallest and largest eigenvalues of the
Gramian matrix $(\langle p_i,p_j\rangle)_{i,j=1,\dots,d}$, such that the following holds.
Let $y_i$ be so that $y_i-z$ is in the direction $p_i$, with $c\rho\le d_\mathcal{M}(y_i,z)\le 2c \rho$ for each $i=1,\dots,d$
and let $t=\thmConst^{-1}\rho^2$.
The map
\begin{equation}
x\mapsto (\rho^d K_{t}(x,y_1)),\dots,\rho^d K_{t}(x,y_d))
\label{e:heatkernelmapping}
\end{equation}
satisfies, for any $x_1,x_2\in\ball(z,\thmConst^{-1}\rho)$,
\begin{equation}
\frac{\thmConst^{-1}}{c'\rho}\,d_{\mathcal{M}}(x_1,x_2) \le ||\Phi(x_1)-\Phi(x_2)||\le \frac{\thmConst c'}{\rho}\,d_{\mathcal{M}}(x_1,x_2)\,.
\label{e:heatkerneldistortionestimate}
\end{equation}
\end{theorem}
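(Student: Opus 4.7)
The plan is to prove a pointwise bound on the Jacobian of
\begin{equation*}
\Phi(x) \;=\; \bigl(\rho^d K_t(x,y_1),\dots,\rho^d K_t(x,y_d)\bigr),
\end{equation*}
showing that all its singular values are of order $\rho^{-1}$ for every $x\in\ball(z,\thmConst^{-1}\rho)$, and then to promote this infinitesimal bi-Lipschitz statement to \eqref{e:heatkerneldistortionestimate} by integrating along a minimizing geodesic in $\mathcal{M}$ between $x_1$ and $x_2$, which (after shrinking $\thmConst^{-1}$ slightly) remains in the region where the estimate is valid.

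The essential analytic input is a heat-kernel gradient formula of the form
\begin{equation*}
\grad_x K_t(x,y) \;=\; \frac{y-x}{2t}\,K_t(x,y) \;+\; E_t(x,y),
\end{equation*}
together with two-sided Gaussian bounds $K_t(x,y)\sim t^{-d/2}e^{-d_{\mathcal{M}}(x,y)^2/(Ct)}$ for the parameter range in play. In Euclidean space the formula is exact with $E\equiv 0$; on a manifold with $\mathcal{C}^{\alphawedgeone}$ metric, working in the chart $(U,\chartMap)$ with the coefficients of $g$ frozen at $z$ and expanding via a Duhamel/parametrix argument produces an error $E_t$ whose size, relative to the principal term, is controlled by a small power of $\rho^{\alphawedgeone}\|g\|_{\alphawedgeone}$ and geometric constants depending on $c_{\min},c_{\max}$. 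These are precisely the heat-kernel and gradient estimates set up in the earlier sections of the paper, which I would quote.

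With $t=\thmConst^{-1}\rho^2$ and $c\rho\le d_{\mathcal{M}}(y_i,z)\le 2c\rho$, for any $x\in\ball(z,\thmConst^{-1}\rho)$ one has $|y_i-x|\sim \rho$ and $K_t(x,y_i)\sim \rho^{-d}$, so the $i$-th row of the Jacobian has magnitude $|\rho^d\grad_x K_t(x,y_i)|\sim \rho^{-1}$ and points, up to angular error bounded by a small constant times $\thmConst^{-1}+(\rho^{\alphawedgeone}\|g\|_{\alphawedgeone})^{1/2}$, in the direction of $p_i/\|p_i\|$. Hence the Jacobian factors as $\rho^{-1}M(x)$ with $M(x)$ a small-norm perturbation of the matrix with unit rows $p_i/\|p_i\|$; its Gramian is then a small-norm perturbation of the normalized Gramian of $(p_i)$, whose spectrum is bounded away from $0$ and $\infty$ by hypothesis. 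Choosing $\thmConst$ large enough that the perturbation is a small fraction of the spectral gap, $M(x)$ has singular values uniformly bounded above and below.

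The main obstacle is the simultaneous calibration of $c$ and $\thmConst$: one needs $\thmConst$ large enough to dominate the H\"older error in the gradient expansion, to keep $\ball(z,\thmConst^{-1}\rho)$ safely inside $\chartMap(U)$, and to confine the directions $(y_i-x)/|y_i-x|$ to a narrow cone around $p_i/\|p_i\|$ uniformly in $x$; but $t=\thmConst^{-1}\rho^2$ must remain large enough that $K_t(x,y_i)\sim \rho^{-d}$ from both sides, which forces $c$ to be chosen in terms of $\thmConst$. Once these constants are fixed, integrating the pointwise singular-value bound along the geodesic joining $x_1$ and $x_2$ in $\ball(z,\thmConst^{-1}\rho)$ produces \eqref{e:heatkerneldistortionestimate} with constants of the desired form.
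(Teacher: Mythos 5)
Your argument is structurally the same as the paper's proof: compare $\grad_x K_t(x,y_j)$ to the gradient of the Euclidean heat kernel, show the (suitably normalized) Jacobian of $\Phi$ is a small perturbation of a fixed matrix whose Gramian is $(\langle p_i,p_j\rangle)$, deduce uniform two-sided singular-value bounds of size $\rho^{-1}$ once the perturbation is small compared to the spectral gap, and integrate along a curve between $x_1,x_2$. The key analytic input you invoke is exactly \eqref{e:gradient_kernel_estimates_q} of Proposition \ref{p:non-smooth-kernel_estimates}, which the paper establishes not by a frozen-coefficient parametrix but probabilistically via Duhamel-type localization with Brownian exit times (Proposition \ref{p:LocalToGlobalHeatKernel} and Lemma \ref{Ptauleqt}); both routes yield the required smallness of the error as $t/\rho^2\to 0$, so this is a presentational rather than substantive difference.

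The one genuine gap is the passage from $g\in\mathcal{C}^2$ to $g\in\mathcal{C}^{\alphawedgeone}$. Proposition \ref{p:non-smooth-kernel_estimates} is stated under Assumption \assumptionA, which includes $g\in\mathcal{C}^2$ (needed for the Brownian-motion arguments to be available); your sketch treats the $\mathcal{C}^{\alphawedgeone}$ case as if the same pointwise gradient bound were directly at hand, but no parametrix estimate in this regularity class is supplied and none exists in the paper. The paper instead proves the theorem for $\mathcal{C}^2$ metrics and then, in Section \ref{s:infinitevolume}, approximates a $\mathcal{C}^{\alphawedgeone}$ metric by compactly supported $\mathcal{C}^2$ metrics $g_k$ with uniform ellipticity, obtains upper bounds on $K_k$ and $\grad K_k$ uniform in $k$ on compacts (via Propositions \ref{p:LocalToGlobalHeatKernel} and \ref{p:variant-LocalToGlobalHeatKernel}), deduces equicontinuity, and passes to the limit $K_k\to K_{\mathcal{M}}$ locally uniformly following Stroock, thereby transferring the uniform bi-Lipschitz bounds to the limit map. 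This limiting step is essential precisely because the theorem makes no finiteness-of-volume assumption and therefore no spectral expansion of $K$ is available; any argument that silently assumes the gradient estimate for $\mathcal{C}^{\alphawedgeone}$ must supply this bridge.
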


\noindent The reason for the factor $\rho^{\alphawedgeone}$ which we have in  $\rho^{\alphawedgeone}||g||_{\alphawedgeone}$ above is to get scaling invariance.

This theorem holds for the manifold and Euclidean case alike, and depends only on the heat kernel estimates (and its gradient).
We again note that for this particular Theorem we require no statement about the volume of the manifold, the existence of $L^2$ Laplacian eigenfunctions, or their number.
The constants for the Euclidean case, depend only on dimension, and not on the domain.
The content of this theorem is that one is able to choose the directions $y_i-z$ randomly on a sphere, and with high probability on gets a  low distortion map.  This gives rise to a sampling theorem.

One may replace the (global) heat kernel above with a local heat kernel, i.e. the heat kernel for the ball $\ball(z,\rho)$ with the metric induced by the manifold and Dirichlet boundary conditions.  In fact, this is a key idea in the proof of all of the above Theorems.  Thus, on the one hand our results are local, i.e. independent of the global geometry of the manifold, yet on the other hand they are in terms of {\it{global}} eigenfunctions.

As  is clear from the proof, all theorems  hold for more general boundary conditions. This is especially true for the Heat Triangulation Theorem, which does not even depend on the existence of a spectral expansion for the heat kernel.

\begin{example}
It is a simple matter to verify this Theorem for the case where the manifold in $\R^d$.  For example if $d=2$, $\rho=1$, and $z=0$,  $y_1=(-1,0)$ and  $y_2=(0,-1)$.  Then if $K_t(x,y)$ is the Euclidean heat kernel,
$$x\to (K_1(x,y_1),\ K_1(x,y_2))$$
is a (nice) biLipschitz map on $\ball\big((0,0),\ \frac12)$. (The result for arbitrary radii then follows from a scaling argument).
This is because on can simply evaluate the heat kernel
$$K_t(x,y)=\frac1{4\pi t}e^{-\frac{|x-y|^2}{4t}}\,.$$
So in $\ball_\frac12((0,0))$
$$\grad K_1(x,y_1)\sim \frac1{2\pi}e^{-\frac{1}{4}}(1,0)\,\,\,\mathrm{\ and\ }\,\,\,\grad K_1(x,y_2)\sim \frac1{2\pi}e^{-\frac{1}{4}}(0,1)\,.$$
\end{example}

\noindent{\large{\bf{Acknowledgments}}}\\
\noindent The authors would like to thank K. Burdzy, R.R. Coifman, P. Gressman, N. Tecu, H. Smith and A.D. Szlam for useful discussions during the preparation of the manuscript, as well as IPAM for hosting these discussions and more.
P. W. Jones is grateful for partial support from NSF DMS 0501300.
M. Maggioni is grateful for partial support from NSF DMS 0650413, NSF CCF 0808847 and ONR N00014-07-1-0625.
R. Schul is grateful for partial support from NSF DMS 0502747.
The main theorems were reported in the announcement \cite{jms:UniformizationEigenfunctions}.

\section{The Proof of Theorems \ref{t:datheorem} and  \ref{t:datheoremmanifold}}
\label{s:proofs-of-EF-thms}

The proofs in the Euclidean and manifold case are similar.
In this section we present the steps of the proofs of Theorems \ref{t:datheorem}, \ref{t:datheoremmanifold}
we will  postpone the technical estimates needed to later sections.

Because we may change base points, we will use $R_z$ (or similarly, $R_w$) in place of $\rho$.
We will also interchange between $\ball(x,r)$ and $\ball_r(x)$.

\begin{remark}[Some remarks about the Manifold case]
\begin{itemize}
\item[(a)]
As mentioned in Remark \ref{r:rem-1}, we will often restrict to working on a single (fixed!) chart in local coordinates.
When we discuss moving in a direction $p$, we mean in the local coordinates.
\item[(b)]
We will use Brownian motion arguments (on the manifold).  In order to have existence and uniqueness one needs smoothness assumptions on the metric (say, $\mathcal{C}^2$, albeit less would suffice, see e.g. \cite{Oksendal-book}).
Therefore we will first prove the Theorem in the manifold case in the $\mathcal{C}^2$ metric category, and then use perturbation estimates to obtain the result for $g\in \mathcal{C}^{\alpha}$.  To this end, we will often have dependence on $||g||_{\alpha}$ even though we will be (for a specific Lemma or Proposition) assuming the $g\in\mathcal{C}^2$.
\end{itemize}
\label{r:rem-initial-remarks}
\end{remark}

\smallskip\noindent
{\textbf{Notation.}}
\begin{itemize}
\item In what follows, we will write
$f(x)\lesssim_{c_1,\dots,c_n} g(x)$ if there exists a constant $C$
depending only on $c_1,\dots,c_n$, and not on $f,g$ or $x$, such
that $f(x)\le C g(x)$ for all $x$ (in a specified domain).
We will write $f(x)\sim_{c_1,\dots,c_n} g(x)$ if both $f(x)\lesssim_{c_1,\dots,c_n} g(x)$ and $g(x) \lesssim_{c_1,\dots,c_n} f(x)$.
If $f,g$ take values in $\mathbb{R}^d$ the inequalities are intended componentwise.
We will write $a\sim_{C_1}^{C_2} b$ if $C_1 b\le a\le C_2 b$ (componentwise for $a,b$ vectors).
\item
In what follows we will write
$\partial_\dir K_t(\cdot,\cdot)$ to denote the partial derivative with respect to the {\it second} variable of  a heat kernel at time $t$.
\end{itemize}

\bigskip

\subsection{The Case of $g\in\mathcal{C}^2$.}
\label{s:proofs-of-EF-C2-case}
We note that even though we assume $g\in\mathcal{C}^2$, we only use the
$\mathcal{C}^{\alphawedgeone}$ norm of $g$.
The idea of the proof of Theorems \ref{t:datheorem} and \ref{t:datheoremmanifold} is as follows.
We start by fixing a direction $p_1$ at $z$.
We would like to find an eigenfunction $\globalEF_{i_1}$ such that $\left|\partial_{p_1}\globalEF_{i_1}\right|\gtrsim R_z^{-1}$  on $\ball_{c_1R_z}(z)$.
In order to achieve this, we start by showing that the heat kernel has large gradient in an annulus  of inner and outer radius $\sim R_z^{-1}$ around $y_1$ ($y_1$ chosen such that $z$ is in this annulus, in direction $p_1$).
We then show that the heat kernel and its gradient can be approximated on this annulus by the partial sum of \eqref{e:heat-kernel-spectral-expansion}
over  eigenfunctions $\globalEF_\globalEV$ which satisfy both
$\globalEV\sim R_z^{-2}$
and $R_z^{-\frac d2}||\globalEF_\globalEV||_{L^2(\ball_{c_1R_z}(z))}\gtrsim 1$.
By the pigeon-hole principle, at least one such eigenfunction, let it be $\globalEF_{i_1}$ has a large partial derivative in the direction $p_1$.
We then consider $\grad\globalEF_{i_1}$ and pick $p_2\perp\grad\globalEF_{i_1}$ and by induction we select $\globalEF_{i_1},\dots,\globalEF_{i_d}$, making sure that at each stage
we can find $\globalEF_{i_k}$, not previously chosen,
satisfying $\left|\partial_{p_k}\globalEF_{i_k}\right|\sim R_z^{-1}$  on $\ball_{c_1R_z}(z)$.
We finally show that for the proper choice of constants $\gamma_1,..,\gamma_d\lesssim 1$, the map
$\Phi:=\left(\gamma_1\globalEF_{i_1},...,\gamma_d\globalEF_{i_d}\right)$ satisfies the desired properties.

When working on a manifold, we  assume in what follows that we fix a local chart containing $B_{R_z}(z)$, as at the beginning of section
\ref{s:manifold}.

{\bf{Step 1. Estimates on the heat kernel and its gradient.}}
Let $K$ be the Dirichlet or Neumann heat kernel on $\Omega$ or $\mathcal{M}$, corresponding to one of the Laplacian operators considered above associated with $g$ and the fixed $\alpha$.

\smallskip\noindent{\bf Assumption} \assumptionA.
Assume $g\in\mathcal{C}^2$, and let $\alpha\in(0,1]$ be given and fixed.
Let constants $\delta_0,\delta_1>0$ depend on $d$, $c_{\min}$, $c_{\max}$, $||g||_{\alphawedgeone}$, ${\alphawedgeone}$.
We consider $z,w\in\Omega$ satisfying $\frac{\delta_1}2 R_z< t^{\frac12} < \delta_1 R_z$ and $|z-w|<\delta_0 R_z$.

\begin{remark}\label{r:inf-vol}
Proposition \ref{p:non-smooth-kernel_estimates} below
makes no assumptions on the finiteness of the volume of $\cM$ and the existence of $\Cweyl$.
It is also used in the proof of Theorem \ref{t:heatkernelmapping}.
\end{remark}

\begin{proposition}
Assume Assumption \assumptionA,   $\delta_0$ sufficiently small, and $\delta_1$ is sufficiently small depending on $\delta_0$.
Then there are constants $C_1,C_2,C_1',C_2',C_9>0$, that depend on  $d$, $\delta_0$, $\delta_1$, $c_{\min}$, $c_{\max}$, $R_z^{\alphawedgeone}||g||_{\alphawedgeone}$, ${\alphawedgeone}$,
such that the following hold:
\begin{itemize}
\item[(i)] the heat kernel satisfies
\begin{gather} \label{e:kernel_estimates}
K_t(z,w) \sim_{C_1}^{C_2} t^{-d\over 2}\,;
\end{gather}
\item[(ii)] if $\frac12\delta_0 R_z< |z-w|$, $p$ is a unit vector in the direction of $z-w$, and $q$ is arbitrary unit vector, then
\begin{equation}
\begin{aligned}
\left|\grad K_t(z,w)\right| \sim_{C_1'}^{C_2'} t^{-d\over 2}\frac{R_z}{t}\,\,\,\mathrm{and}\,\,\,
\left|\partial_p K_t(z,w)\right| \sim_{C_1'}^{C_2'} t^{-d\over 2}\frac{R_z}{t}
\end{aligned}
\label{e:gradient_kernel_estimates}
\end{equation}
\begin{equation}
\left|\partial_q K_t(z,w)-\partial_q K^{\R^d}_t(z,w)\right|\le C_9 t^{\frac{-d}2}\frac{R_z}{t}\,,
\label{e:gradient_kernel_estimates_q}
\end{equation}
where $C_9\rightarrow0$ as $\delta_1\rightarrow 0$ (with $\delta_0$  fixed);
here, $K^{\R^d}_t(z,w)$ is the usual Euclidean heat kernel.
\item[(iii)]
if 
$\frac12\delta_0 R_z< |z-w|$, and $q$ is as above, then for $s\le t$,
\begin{equation}
K_s(z,w) \lesssim_{C_2} t^{-d\over 2}\,\,\,,\,\,\,
\left|\grad K_s(z,w)\right| \lesssim_{C_2'} t^{-d\over 2}\frac{R_z}{t}\,\,\,\mathrm{and}\,\,\,
\left|\partial_q K_s(z,w)\right| \lesssim_{C_2'} t^{-d\over 2}\frac{R_z}{t}\,;
\label{e:kernel_estimates_s}
\end{equation}
\item[(iv)]
$C_1,C_2$ both tend to a single function of
$d$, $c_{\min}$, $c_{\max}$, $||g||_{\alphawedgeone}$, ${\alphawedgeone}$,
as $\delta_1$ tends to $0$ with $\delta_0$ fixed;
\end{itemize}

\label{p:non-smooth-kernel_estimates}
\end{proposition}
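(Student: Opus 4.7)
The plan is to use the local chart normalization to reduce everything to a comparison with the Euclidean heat kernel $K_t^{\R^d}$. Since $g^{il}(\chartMap(z))=\delta^{il}$ and $g\in\mathcal{C}^{\alphawedgeone}$, in the chart
\[
|g^{ij}(\chartMap(y))-\delta^{ij}|\le\|g\|_{\alphawedgeone}\,|\chartMap(y)-\chartMap(z)|^{\alphawedgeone}.
\]
At the scale $t^{1/2}\le\delta_1 R_z$ where the heat kernel concentrates, the coefficients of the Laplace--Beltrami operator $L$ (together with the volume-form factor $\sqrt{\det g}$) therefore differ from those of $\Delta_{\R^d}$ by at most $\|g\|_{\alphawedgeone}(\delta_1 R_z)^{\alphawedgeone}$, a small parameter once $\delta_1$ is small relative to fixed $\delta_0$.

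The core ingredient is a Duhamel identity comparing $K_t$ with $K_t^{\R^d}$. Writing $L-\Delta_{\R^d}$ in divergence form and integrating by parts yields
\[
K_t(z,w)-K_t^{\R^d}(z,w)=-\int_0^t\!\int \partial_j K_{t-s}^{\R^d}(z,y)\,a^{ij}(y)\,\partial_i K_s(y,w)\,dy\,ds,
\]
with $|a^{ij}(y)|\lesssim \|g\|_{\alphawedgeone}|y-\chartMap(z)|^{\alphawedgeone}$. I would combine the explicit Gaussian bound for $|\partial_j K_{t-s}^{\R^d}(z,y)|$ with the Aronson/Nash Gaussian gradient bound $|\partial_i K_s(y,w)|\lesssim s^{-d/2-1/2}e^{-c|y-w|^2/s}$ (valid for uniformly elliptic operators with $\mathcal{C}^{\alphawedgeone}$ coefficients). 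Pairing the Hölder weight $|y-\chartMap(z)|^{\alphawedgeone}$ with the Gaussian decay yields a bound $\epsilon(\delta_1)\,t^{-d/2}$ with $\epsilon(\delta_1)\lesssim(\delta_1 R_z)^{\alphawedgeone}\|g\|_{\alphawedgeone}\to 0$ as $\delta_1\to 0$. Differentiating the identity in $z$ in a direction $q$ and running the same estimates gives the companion gradient comparison
\[
|\grad K_t(z,w)-\grad K_t^{\R^d}(z,w)|\le \epsilon(\delta_1)\,t^{-d/2}\,\frac{R_z}{t}.
\]
Boundary contributions are exponentially small in $(R_z/\sqrt{t})^2\gtrsim\delta_1^{-2}$ since $\ball(z,R_z)\subset\chartMap(U)$ by definition of $R_z$.

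With the comparison in hand, items (i)--(iv) drop out. For (i), the explicit formula $K_t^{\R^d}(z,w)=(4\pi t)^{-d/2}e^{-|z-w|^2/(4t)}$ is $\sim t^{-d/2}$ because $|z-w|^2/t\le 4\delta_0^2/\delta_1^2$ is bounded, with comparison constants depending on $\delta_0/\delta_1$; the perturbation leaves $K_t$ in the same range. For (ii), $\partial_i K_t^{\R^d}(z,w)=\tfrac{w_i-z_i}{2t}K_t^{\R^d}(z,w)$ has magnitude $\sim t^{-d/2}R_z/t$ and points along $z-w$ when $|z-w|\sim R_z$, yielding \eqref{e:gradient_kernel_estimates} after comparison, while \eqref{e:gradient_kernel_estimates_q} is exactly the gradient comparison with $C_9=\epsilon(\delta_1)$. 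For (iii), the profiles $s\mapsto s^{-d/2}e^{-c|z-w|^2/s}$ and $s\mapsto s^{-d/2-1/2}e^{-c|z-w|^2/s}$ are monotonically increasing in the regime $s\le t\ll |z-w|^2\sim R_z^2$, so their values at $s$ are controlled by their values at $s=t$; the comparison transfers this monotonicity to $K_s$ and $\grad K_s$. For (iv), the comparison pinches $C_1$ and $C_2$ to a common Euclidean value as $\delta_1\to 0$.

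The hard part is the explicit smallness $C_9\to 0$ in \eqref{e:gradient_kernel_estimates_q}: it requires an Aronson/Nash gradient bound on $K_s$ whose constants depend only on ellipticity and the $\mathcal{C}^{\alphawedgeone}$ norm of $g$ (not on the $\mathcal{C}^2$ norm, which we invoke only transiently per Remark \ref{r:rem-initial-remarks}(b)), together with a careful bookkeeping of the pairing of $|y-\chartMap(z)|^{\alphawedgeone}$ against the Gaussian weights so that the power $\delta_1^{\alphawedgeone}$ is extracted cleanly. The remaining items are then essentially routine computations from the Euclidean heat kernel together with the comparison.
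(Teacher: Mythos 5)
Your Duhamel/parametrix strategy is a genuinely different route from the paper's. The paper proceeds probabilistically: for the Euclidean Dirichlet kernel it uses the exact Brownian identity
$K_t^\Omega(z,w)=K_t^{\R^d}(z,w)-\E_\omega\bigl[K_{t-\tau}^{\R^d}(B^z_\omega(\tau),w)\chi_{t>\tau}\bigr]$,
whose correction term is exponentially small in $(R_z/\sqrt t)^2\gtrsim\delta_1^{-2}$; for the Neumann and manifold cases it localizes via Proposition~\ref{p:LocalToGlobalHeatKernel} to a Dirichlet ball and then compares the variable-coefficient ball kernel to the flat one by perturbing eigenfunctions through the Gr\"uter--Widman Green-function bounds (Lemmas~\ref{lemma-1}--\ref{lemma-4}, \ref{large-head}, \ref{1-7-07}). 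The payoff of the paper's route is that it only ever differentiates explicit objects (the flat kernel, the Green function), avoiding the need to \emph{assume} pointwise gradient bounds for $K_s$ with variable coefficients; your route, if completed, is more classical and would presumably give cleaner constants, but it outsources exactly the hard estimate.

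Two concrete gaps. First, your Duhamel identity as written has no boundary term: it integrates $\partial_j K^{\R^d}_{t-s}$ against $\partial_i K_s$ over all of $\R^d$ (or the chart), while $K_s$ lives on $\Omega$ or $\mathcal M$ with a boundary condition and the perturbation $a^{ij}=g^{ij}-\delta^{ij}$ is only small near $z$. You need either a cutoff in the parametrix construction or, as the paper does, an explicit exact identity (the Brownian stopping identity, or equation \eqref{e:PetersMagic}) whose correction you can bound; merely asserting that boundary contributions are ``exponentially small'' does not reconcile with a formula that has no such term. Second, and more seriously, the entire argument leans on a Gaussian gradient bound $|\partial_i K_s(y,w)|\lesssim s^{-(d+1)/2}e^{-c|y-w|^2/s}$ with constants depending only on ellipticity and $\|g\|_{\alphawedgeone}$. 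The paper explicitly does \emph{not} take such a bound for granted — it derives the needed gradient control from the Gr\"uter--Widman estimates \eqref{e:gradGestimate}--\eqref{e:gradGholderestimate} and from eigenfunction bounds (Lemma~\ref{lemma-ef-bound}), working first in $\mathcal{C}^2$ and then passing to $\mathcal{C}^\alpha$ by the approximation scheme of Remark~\ref{r:rem-initial-remarks}(b). Establishing that gradient bound uniformly, with the correct $\delta_1^{\alphawedgeone}$ smallness for $C_9$ and uniformly over $s\le t$ (needed for item (iii), where you would otherwise face an $s^{-d/2}$ that blows up as $s\to 0$ unless the Gaussian factor is invoked carefully), is essentially equivalent in difficulty to what Proposition~\ref{p:non-smooth-kernel_estimates} is asserting. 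So the proposal reduces the proposition to a lemma you would still have to prove from scratch.
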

\noindent The reason for the factor of $R_z^{\alphawedgeone}$ which we have in  $R_z^{\alphawedgeone}||g||_{\alphawedgeone}$ above is to get scaling invariance.
Proposition \ref{p:non-smooth-kernel_estimates} is proved in  subsection \ref{ss_Heat_kernel_estimates} for the Euclidean case and in subsection \ref{s:HeatKernelEstimatesManifold}.

We continue with  the proof of Theorem \ref{t:datheorem} and \ref{t:datheoremmanifold}.
From here on, unless explicitly stated,   we assume the existence $\Cweyl$.
We have the spectral expansion
\begin{equation}
K_t(x,y)=\sum_{j=0}^{+\infty} e^{-\globalEV_j t}\globalEF_j(x)\globalEF_j(y)\,.
\label{e:kernel_spectral}
\end{equation}

\begin{remark}
The assumptions of Theorems \ref{t:datheorem} and \ref{t:datheoremmanifold} say that $\vol{\cM}=1$ (manifold case) or $\vol{\Omega}=1$ (Euclidean domain case).
Thus, unless explicitly stated, we will assume in the lemmata below that we have $R_z\lesssim_{d,c_{\min},c_{\max},||g||_{\alphawedgeone},{\alphawedgeone}} 1$.
\end{remark}

The following steps aim at replacing appropriately chosen heat kernels by a set of eigenfunctions, by extracting the ``leading terms'' in their
spectral expansion.

{\bf{Step 2. Heat kernel and eigenfunctions.}}
We start by restricting our attention to eigenfunctions which do not have too high frequency.
Let
\begin{equation}\label{d:Lambda-L-H-definitions}
\Lambda_L(A) = \left\{ \globalEV_j : \globalEV_j\le At^{-1}\right\}\,\,\,\mathrm{and}\,\,\,\Lambda_H(A') = \left\{ \globalEV_j : \globalEV_j> A't^{-1}\right\}=\Lambda_L(A')^c
\end{equation}

A first connection between the heat kernel and eigenfunctions is given by the following truncation Lemma, which is proved in  subsection \ref{ss_Heat_kernel_and_eigenfunctions}:

\begin{lemma}
Under Assumption \assumptionA, for $A>1$ large enough and $A'<1$ small enough, depending on $\delta_0,\delta_1,C_1,C_2,C_1',C_2'$ (as in Proposition \ref{p:non-smooth-kernel_estimates}),
there exist constants $C_3,C_4$ (depending on $A,A'$ as well as $d$, $c_{\min}$, $c_{\max}$, $||g||_{\alphawedgeone}$, ${\alphawedgeone}$) such that:
\begin{itemize}
\item[(i)] The heat kernel is approximated by the truncated expansion
\begin{gather} \label{e:FA_kernel_estimates}
K_t(z,w)\sim_{C_3}^{C_4}\suml_{j\in \Lambda_L(A)}\globalEF_j(z)\globalEF_j(w)e^{-\globalEV_j t}\,.
\end{gather}
\item[(ii)]
If $\frac12\delta_0 R_z<|z-w|$,   and  $p$ is a unit vector parallel to $z-w$, then
\begin{eqnarray}
\partial_\dir K_t(z,w)&\sim_{C_3}^{C_4}&\suml_{j\in \Lambda_L(A)\cap \Lambda_H(A')}\globalEF_j(z)\partial_\dir\globalEF_j(w)e^{-\globalEV_j t} \label{e:FA_partial_kernel_estimates}\,.
\end{eqnarray}
Furthermore,
\begin{eqnarray}
\left\|\suml_{j\notin \Lambda_L(A)\cap \Lambda_H(A')}
    \globalEF_j(z)\grad\globalEF_j(w)e^{-\globalEV_j t}     \right\|
\leq C_{10} t^{\frac{-d}2}\frac{R_z}{t}\label{e:FA_grad_kernel_estimates}
\end{eqnarray}
where $C_{10}\rightarrow0$ as $A\rightarrow\infty$ and $A'\rightarrow0$.
\item[(iii)]
$C_3,C_4$ both tend to $1$ as $A\rightarrow\infty$ and $A'\rightarrow0$.
\end{itemize}\label{l:truncated_kernel}
\end{lemma}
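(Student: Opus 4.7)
The plan is to decompose the spectral expansion of $K_t(z,w)$ and of $\partial_\dir K_t(z,w)$ into three frequency bands delimited by the thresholds $A'/t$ and $A/t$: a low band ($\globalEV_j\le A'/t$), a middle band ($A'/t<\globalEV_j\le A/t$, i.e.\ $\Lambda_L(A)\cap\Lambda_H(A')$), and a high band ($\globalEV_j>A/t$). Showing that the low and high tails vanish as $A'\to 0$ and $A\to\infty$ will leave the middle band as a good approximation of the heat kernel (part (i)) and its gradient (parts (ii), (iii)), with ratio constants $C_3,C_4\to 1$.

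For part (i), only the high cut is needed. Factoring $e^{-\globalEV_j t}=e^{-\globalEV_j t/2}\,e^{-\globalEV_j t/2}$ and using $e^{-\globalEV_j t/2}<e^{-A/2}$ on the high band, Cauchy--Schwarz yields
\[
\Big|\sum_{\globalEV_j>A/t}\globalEF_j(z)\globalEF_j(w)e^{-\globalEV_j t}\Big|\le e^{-A/2}\,K_{t/2}(z,z)^{1/2}K_{t/2}(w,w)^{1/2}\lesssim e^{-A/2}\,t^{-d/2},
\]
using the on-diagonal bound from Proposition~\ref{p:non-smooth-kernel_estimates}(i). Since that proposition also gives $K_t(z,w)\sim t^{-d/2}$, the tail is a small multiplicative perturbation once $A$ is large, yielding \eqref{e:FA_kernel_estimates} with $C_3,C_4\to 1$. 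The identical Cauchy--Schwarz trick applied to $\partial_\dir K_t(z,w)$, now using the on-diagonal mixed-derivative estimate $\partial^{(1)}_\dir\partial^{(2)}_\dir K_{t/2}(w,w)=\sum_j|\partial_\dir\globalEF_j(w)|^2 e^{-\globalEV_j t/2}\lesssim t^{-d/2-1}$ (a standard Gaussian companion of Proposition~\ref{p:non-smooth-kernel_estimates}(ii)), bounds the high-frequency tail of $\partial_\dir K_t$ by $\lesssim e^{-A/2}\,t^{-d/2-1/2}\sim e^{-A/2}\,t^{-d/2}\,R_z/t$.

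The delicate step is the low-frequency tail of the gradient. The function $u(w):=\sum_{\globalEV_j\le A'/t}\globalEF_j(z)\globalEF_j(w)e^{-\globalEV_j t}$ is spectrally supported in $[0,A'/t]$, and writing it as $K_t(z,\cdot)$ minus the high-tail at threshold $A'/t$ yields $\|u\|_\infty\lesssim t^{-d/2}$. I would then invoke a Bernstein-type inequality $\|\grad u\|_\infty\lesssim\sqrt{A'/t}\,\|u\|_\infty$ for functions spectrally localized below $A'/t$, which one derives from the heat semigroup gradient bound $\|\grad e^{-s\Delta}\|_{L^\infty\to L^\infty}\lesssim s^{-1/2}$ applied at scale $s\sim t/A'$ together with the spectral support hypothesis. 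This gives
\[
\Big|\sum_{\globalEV_j\le A'/t}\globalEF_j(z)\grad\globalEF_j(w)e^{-\globalEV_j t}\Big|\lesssim\sqrt{A'}\,t^{-d/2}\,R_z/t.
\]
Summing the two tails produces \eqref{e:FA_grad_kernel_estimates} with $C_{10}\lesssim e^{-A/2}+\sqrt{A'}$, and combining with the two-sided bound $|\partial_\dir K_t(z,w)|\sim t^{-d/2}\,R_z/t$ from Proposition~\ref{p:non-smooth-kernel_estimates}(ii) yields \eqref{e:FA_partial_kernel_estimates} and part (iii).

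The main obstacle is the Bernstein-type inequality in the $\mathcal{C}^{\alphawedgeone}$ category; it is the genuine source of the $\sqrt{A'}$ smallness in $C_{10}$, and simpler term-by-term bounds based on H\"ormander-type $L^\infty$ estimates for $\globalEF_j$ and $\grad\globalEF_j$, combined with the Weyl count \eqref{e:weyl-bounds-omega}, are not tight enough to give a bound uniform in the regime $t\sim R_z^2$ when $d\ge 2$. All other steps reduce to routine Cauchy--Schwarz arguments fed by the on-diagonal heat kernel estimates already packaged in Proposition~\ref{p:non-smooth-kernel_estimates}.
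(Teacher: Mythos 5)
Your overall band decomposition — low ($\globalEV_j\le A'/t$), middle ($A'/t<\globalEV_j\le A/t$), high ($\globalEV_j>A/t$) — and your Cauchy--Schwarz treatment of the high-frequency tail of $K_t$ itself match the paper exactly. The two gradient tails are where your route diverges, and there are genuine problems.

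For the high-frequency gradient tail you invoke an on-diagonal mixed-derivative bound $\sum_j|\partial_\dir\globalEF_j(w)|^2e^{-\globalEV_jt/2}\lesssim t^{-d/2-1}$, describing it as a ``standard Gaussian companion'' of Proposition \ref{p:non-smooth-kernel_estimates}(ii). But that proposition gives gradient bounds only off-diagonal, at $|z-w|\sim\delta_0 R_z$; the on-diagonal second mixed derivative is a different estimate, and in the $\mathcal{C}^{\alphawedgeone}$ category it is neither stated nor proved in the paper. The paper instead reaches this conclusion indirectly: it bounds $\|\grad\globalEF_j(w)\|$ pointwise by $\gefpoly(\globalEV_jr_w^2)r_w^{-1}\bigl(\fint_{\ball(w,r_w/2)}|\globalEF_j|^2\bigr)^{1/2}$ via Proposition \ref{p:PhiLinftyEstimates}, absorbs the polynomial factor into the exponential decay, and then resums $\sum_j\fint|\globalEF_j|^2e^{-\globalEV_jt/4}$ into $\fint K_{t/4}(w',w')$, which it controls by the on-diagonal kernel bound. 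Your shortcut would need the mixed-derivative on-diagonal estimate to be proved separately; it does not come for free.

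The larger gap is in the low-frequency tail, and your own assessment of the alternatives is backwards. The Bernstein-type inequality $\|\grad u\|_\infty\lesssim\sqrt{A'/t}\,\|u\|_\infty$ for spectrally band-limited $u$ needs the semigroup gradient bound $\|\grad e^{-s\Delta}\|_{L^\infty\to L^\infty}\lesssim s^{-1/2}$, and in the $\mathcal{C}^{\alphawedgeone}$ (or Neumann) setting this $L^\infty$ smoothing is not available off the shelf and is not established in the paper; moreover the obvious factorization $u=e^{-\tau\Delta}(e^{\tau\Delta}u)$ does not give an $L^\infty$ control of $e^{\tau\Delta}u$ by $\|u\|_\infty$ without further work (a band-limited spectral multiplier argument, again resting on precisely the eigenfunction $L^\infty$ estimates you dismiss). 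Meanwhile, you claim that ``term-by-term bounds based on Hörmander-type $L^\infty$ estimates for $\globalEF_j$ and $\grad\globalEF_j$, combined with the Weyl count, are not tight enough.'' This is not what the paper does, and the method that the paper \emph{does} use is tight enough: it bounds each $\|\grad\globalEF_j\|_\infty$ by a local $L^2$ average via Proposition \ref{p:PhiLinftyEstimates} (not by an $L^2(\Omega)$ normalization with Weyl counting), notes that on the low band the polynomial factor $\gefpoly(\globalEV_jr_w^2)\le\gefpoly(A'/\delta_1)$ stays bounded, resums $\sum_j\fint_{\ball(w,r_w/2)}|\globalEF_j|^2e^{-c\globalEV_jt}$ into a trace $\fint K_{ct}(w',w')\lesssim t^{-d/2}$, and then observes that $\frac{1}{r_w}\sim\delta_1^2\frac{R_z}{t}$, so the whole low-band contribution is smaller than $\frac{R_z}{t}t^{-d/2}$ by a factor that vanishes as $\delta_1\to0$ (and hence as $A'\to0$ with the constants coupled appropriately). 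So you should either supply a complete proof of the Bernstein inequality in this regularity class — which would be a substantive addition beyond the paper — or use the paper's route through Proposition \ref{p:PhiLinftyEstimates}, which you incorrectly ruled out.

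Finally, the combination of the tail bounds with the two-sided lower bound from Proposition \ref{p:non-smooth-kernel_estimates} to conclude parts (ii) and (iii) is correct and the same as the paper.
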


This Lemma implies that in the heat kernel expansion we do not need to consider eigenfunctions corresponding to eigenvalues larger than $At^{-1}$.
However, in our search for eigenfunctions with the desired properties, we need to restrict our attention further, by discarding eigenfunctions that have too
small a gradient around $z$.
Let
\begin{equation}\label{d:Lambda-E-definitions}
\Lambda_E(\dir,z,R_z,\delta_0,c_0):=
\left\{\globalEV_j\in\sigma(\Delta): \ \ \
    \frac1{c_0}R_z
    |\partial_\dir\globalEF_j(z)|
    \geq
    \left(\fint_{\ball(z,\frac12\delta_0R_z)}\globalEF_j(z')^2\,dz'\right)^\frac12
    \right\}\,.
\end{equation}

Here and in what follows, $\fint_A f=|A|^{-1}\int_A f$.
The truncation Lemma \ref{l:truncated_kernel} can be strengthened, on average,  into

\begin{lemma}
Assume Assumption \assumptionA,
$\delta_0$ sufficiently small, and $\delta_1$ is sufficiently small depending on $\delta_0$.
For $C_3,C_4$ close enough to $1$ (as in Lemma \ref{l:truncated_kernel}), and $c_0$ small enough
(depending on $C_2,C_1',\delta_0,\delta_1$)
there exist constants $C_5,C_6$ (depending only on $C_3$, $C_4$, $C_9$, and $c_0$) such that
if $\frac12\delta_0 R_z< |z-w|$,
and $p$ is a unit vector parallel to $z-w$,
then
\begin{gather}
    \left|\partial_\dir K_t(w,z)    \right|
\sim_{C_5}^{C_6}
    \left| \sum_{\globalEV_j \in
        \Lambda_L(A)\cap \Lambda_H(A') \cap \Lambda_E(z,R_z,\delta_0,c_0)}
    \globalEF_j(w)\,\partial_\dir\globalEF_j(z)\,e^{-\globalEV_j t}    \right|
    \,.
\label{e:truncatelocalavepartialheatkernel}
\end{gather}
\label{l:truncated_truncated_kernel}
\end{lemma}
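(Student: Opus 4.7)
\emph{Plan.} The goal is to strengthen Lemma~\ref{l:truncated_kernel}(ii) by further discarding from the truncated spectral sum those eigenfunctions whose directional derivative at $z$ is too small relative to their local $L^2$ mass near $z$. Using the symmetry $K_t(z,w) = K_t(w,z)$ and applying Lemma~\ref{l:truncated_kernel}(ii) with $z,w$ swapped, one has
\[
\partial_p K_t(w,z) \sim_{C_3}^{C_4} \sum_{\globalEV_j \in \Lambda_L(A) \cap \Lambda_H(A')} \globalEF_j(w)\,\partial_p\globalEF_j(z)\,e^{-\globalEV_j t}\,.
\]
Thus it suffices to prove that the residual sum
\[
S := \Bigl|\sum_{\globalEV_j \in \Lambda_L(A)\cap \Lambda_H(A')\setminus \Lambda_E} \globalEF_j(w)\,\partial_p\globalEF_j(z)\,e^{-\globalEV_j t}\Bigr|
\]
is at most a small fraction of $|\partial_p K_t(w,z)|$. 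Then the sum restricted to $\Lambda_L \cap \Lambda_H \cap \Lambda_E$ still captures $\partial_p K_t(w,z)$ up to multiplicative constants $C_5,C_6$ close to $C_3,C_4$.

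\emph{Bounding $S$.} The key tool is the Cauchy--Schwarz inequality, splitting $e^{-\globalEV_j t} = e^{-\globalEV_j t/2}\cdot e^{-\globalEV_j t/2}$:
\[
S \le \Bigl(\sum_{j} \globalEF_j(w)^2 e^{-\globalEV_j t}\Bigr)^{1/2}\Bigl(\sum_{\globalEV_j \notin \Lambda_E}(\partial_p\globalEF_j(z))^2 e^{-\globalEV_j t}\Bigr)^{1/2}\,.
\]
The first factor equals $K_t(w,w)^{1/2}$ and is $\lesssim_{C_2} t^{-d/4}$ by Proposition~\ref{p:non-smooth-kernel_estimates}(i) with $z=w$. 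For the second factor, the defining complement of $\Lambda_E$ gives $(\partial_p\globalEF_j(z))^2 < \frac{c_0^2}{R_z^2}\fint_{\ball(z,\frac{\delta_0}{2}R_z)}\globalEF_j^2$, so swapping sum and integral,
\[
\sum_{\globalEV_j \notin \Lambda_E}(\partial_p\globalEF_j(z))^2 e^{-\globalEV_j t}
\le \frac{c_0^2}{R_z^2\,|\ball(z,\tfrac{\delta_0}{2}R_z)|}\int_{\ball(z,\frac{\delta_0}{2}R_z)} K_t(x,x)\,dx
\lesssim_{C_2} \frac{c_0^2}{R_z^2}\,t^{-d/2}\,,
\]
again using the on-diagonal bound $K_t(x,x)\lesssim t^{-d/2}$. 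Combining the two factors, $S \lesssim_{C_2} \frac{c_0}{R_z}\,t^{-d/2}$.

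\emph{Conclusion and anticipated obstacle.} Assumption~\assumptionA\ ties $t$ to $R_z^2$ via $t \sim \delta_1^2 R_z^2$, hence $\frac{1}{R_z} \sim \delta_1^2\,\frac{R_z}{t}$ and $S \lesssim_{C_2} c_0 \delta_1^2 \cdot t^{-d/2}\,\frac{R_z}{t}$. On the other hand Proposition~\ref{p:non-smooth-kernel_estimates}(ii) gives the matching lower bound $|\partial_p K_t(w,z)| \gtrsim_{C_1'} t^{-d/2}\,\frac{R_z}{t}$. Choosing $c_0$ small enough in terms of $C_2,C_1',\delta_0,\delta_1$ therefore makes $S$ an arbitrarily small fraction of $|\partial_p K_t(w,z)|$, so subtracting it from the expansion of Lemma~\ref{l:truncated_kernel}(ii) yields \eqref{e:truncatelocalavepartialheatkernel} with constants $C_5,C_6$ close to $C_3,C_4$. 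The main obstacle I expect is ensuring that the on-diagonal bound $K_t(x,x) \lesssim t^{-d/2}$ holds \emph{uniformly} in $x$ as $x$ varies over $\ball(z,\frac{\delta_0}{2}R_z)$; this requires knowing that $R_x \sim R_z$ and that the relevant H\"older/ellipticity constants are controlled throughout the chart, which in turn is ensured by Remark~\ref{r:rem-1} and by the choice of a single local chart containing $\ball_{R_z}(z)$.
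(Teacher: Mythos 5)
Your proof is correct and follows essentially the same route as the paper: apply Cauchy--Schwarz to the residual sum over $\Lambda_L\cap\Lambda_H\setminus\Lambda_E$, bound the first factor by $K_t(w,w)^{1/2}$, use the complement of the $\Lambda_E$ condition to extract a factor $c_0/R_z$ from the second factor, swap sum and average to land on an integrated on-diagonal heat kernel, and compare the resulting bound $\lesssim c_0 R_z^{-1} t^{-d/2}$ against the lower bound $|\partial_p K_t(w,z)|\gtrsim t^{-d/2}R_z/t$ from Proposition~\ref{p:non-smooth-kernel_estimates}(ii) to conclude by taking $c_0$ small. The only step the paper makes explicit that you do not is the remark that the constant $C_9$ enters through the gradient estimate \eqref{e:gradient_kernel_estimates_q} used in controlling the left-hand side, but this is pure bookkeeping of constants and does not affect the validity of your argument.
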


{\bf{Step 3. Choosing appropriate eigenfunctions.}}

Define  the constants  $\gamma_{\globalEF_j}$ as
\begin{equation}
\gamma_{\globalEF_j}=\left(\fint_{\ball_{\frac12\delta_0R_z}(z)}\globalEF_j(z')^2\,dz'\right)^{-\frac{1}2}\,.
\label{e:gamma-def}
\end{equation}
Note that since  $\globalEF_j$ is $L^2$ normalized, we have
$\gamma_{\globalEF_j}\gtrsim R_z^{d/2}$.
A subset of these constants and corresponding eigenfunctions  will soon  be chosen to give us the constants
$\{\gamma_j\}$ and corresponding eigenfunctions $\{\globalEF_{i_j}\}$ in the statement of Theorem \ref{t:datheorem} and Theorem  \ref{t:datheoremmanifold}.

The set of eigenfunctions with eigenvalues in $\Lambda_L(A)\cap \Lambda_H(A') \cap \Lambda_E(\dir,z,R_z,\delta_0,c_0)$ is well-suited for our purposes, in view of the following:

\begin{lemma}
Under Assumption \assumptionA, for $\delta_0$ small enough, there exists a constant $C_7$ depending on
$c_0$ and $C_8$ depending on
$\{\delta_0,c_{\min},c_{\max},||g||_{\alphawedgeone},{\alphawedgeone}\}$
and a constant $b>0$ which depends on
$c_0$, $d$, $c_{\min}$, $c_{\max}$, $||g||_{\alphawedgeone}$, ${\alphawedgeone}$
such that the following holds.
Let  $\dir$ be a direction.
For  all $j\in  \Lambda_E(\dir,z,R_z,\delta_0,c_0)$ we have
that for all $z'$ such that $||z-z'||\le b \delta_0R_z$
\begin{equation}
\left|\partial_\dir \globalEF_{j}(z') \right| \sim_{C_7}^{C_8} R_z^{-1}\left(\fint_{\ball_{\frac12\delta_0R_z}(z)}\globalEF_j^2\right)^\frac12\,.
\label{e:partial_lower_bound_nearby}
\end{equation}
\label{l:bigpartials}
Moreover, there exists a index  $j\in \Lambda_L(A)\cap \Lambda_H(A') \cap \Lambda_E(\dir,z,R_z,\delta_0,c_0)$,
so that we have
\begin{equation}\label{e:energy-lower-bound}
\gamma_{\globalEF_j}\lesssim (\Cweyl)^\frac12\,
\end{equation}
with constants depending on $A, C_1,C_1',C_2,C_2',C_6, \{d,c_{\min},c_{\max},||g||_{\alphawedgeone},{\alphawedgeone}\}$, $\delta_0,\delta_1$.
\end{lemma}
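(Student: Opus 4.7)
The plan is to establish the two claims in sequence, both reducing to interior elliptic regularity plus the heat-kernel identities already assembled. I would begin with \eqref{e:partial_lower_bound_nearby}, which amounts to saying that the lower bound on $|\partial_\dir \globalEF_j|$ at $z$ propagates to a slightly smaller concentric ball. For $j$ in $\Lambda_L(A)\cap\Lambda_E$ (the only case actually needed downstream; see the obstacle discussion below), the eigenfunction satisfies $\Delta \globalEF_j = \globalEV_j\globalEF_j$ with $\globalEV_j\lesssim R_z^{-2}$ on $\ball(z,\delta_0 R_z)$, and the uniformly elliptic operator \eqref{e:DeltaM} has $\mathcal{C}^{\alphawedgeone}$ coefficients whose relevant Hölder norm is controlled by $R_z^{\alphawedgeone}\|g\|_{\alphawedgeone}$. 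Standard interior Schauder estimates therefore yield
\begin{equation*}
\|\partial_\dir \globalEF_j\|_{\mathcal{C}^{\alphawedgeone}(\ball(z,\tfrac14\delta_0 R_z))} \lesssim R_z^{-1-\alphawedgeone}\,\gamma_{\globalEF_j}^{-1},
\end{equation*}
with implicit constant depending only on $d,c_{\min},c_{\max},\|g\|_{\alphawedgeone},\alphawedgeone$. Combined with the defining inequality $|\partial_\dir \globalEF_j(z)|\geq c_0 R_z^{-1}\gamma_{\globalEF_j}^{-1}$ for $\Lambda_E$, a Hölder expansion gives
\begin{equation*}
|\partial_\dir \globalEF_j(z')-\partial_\dir \globalEF_j(z)| \lesssim (b\delta_0)^{\alphawedgeone}\,R_z^{-1}\,\gamma_{\globalEF_j}^{-1},
\end{equation*}
so selecting $b$ small in terms of $c_0$, $\delta_0$, and the Schauder constant locks in the lower bound (with constant $C_7$) via the reverse triangle inequality; the matching upper bound is the $L^\infty$ part of the same Schauder estimate.

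For the second claim \eqref{e:energy-lower-bound}, my plan is to extract the desired index by Cauchy--Schwarz on the truncated spectral expansion, then apply Weyl counting and pigeonhole. Fix any $w$ with $\frac12\delta_0 R_z<\|z-w\|<\delta_0 R_z$ and let $\dir$ be the unit vector from $z$ to $w$. By Lemma \ref{l:truncated_truncated_kernel} together with the gradient estimate \eqref{e:gradient_kernel_estimates},
\begin{equation*}
\left|\sum_{\globalEV_j\in \Lambda_L(A)\cap\Lambda_H(A')\cap\Lambda_E}\globalEF_j(w)\,\partial_\dir \globalEF_j(z)\,e^{-\globalEV_j t}\right| \gtrsim t^{-d/2}\,R_z/t.
\end{equation*}
Applying Cauchy--Schwarz and bounding $\sum_j \globalEF_j(w)^2 e^{-\globalEV_j t}\le K_t(w,w)\lesssim t^{-d/2}$ via Proposition \ref{p:non-smooth-kernel_estimates}(i) (the partial sum is dominated by the full spectral expansion at $(w,w)$, all terms of which are nonnegative) produces
\begin{equation*}
\sum_{j\in\Lambda_L\cap\Lambda_H\cap\Lambda_E}(\partial_\dir \globalEF_j(z))^2 e^{-\globalEV_j t} \gtrsim t^{-d/2}(R_z/t)^2.
\end{equation*}
Inserting the just-established upper bound $(\partial_\dir \globalEF_j(z))^2\lesssim R_z^{-2}\gamma_{\globalEF_j}^{-2}$ from \eqref{e:partial_lower_bound_nearby} at $z'=z$, and using $t\sim R_z^2$, this reduces to
\begin{equation*}
\sum_{j\in\Lambda_L\cap\Lambda_H\cap\Lambda_E}\gamma_{\globalEF_j}^{-2}\,e^{-\globalEV_j t} \gtrsim t^{-d/2}.
\end{equation*}

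To conclude, I would invoke Weyl's bound \eqref{e:weyl-bounds-omega} with $T=At^{-1}$: the number of summands is at most $\#\Lambda_L(A)\le\Cweyl A^{d/2}t^{-d/2}$. Since $e^{-\globalEV_j t}\in[e^{-A},e^{-A'}]$ on $\Lambda_L\cap\Lambda_H$, the pigeonhole principle produces some $j$ in the triple intersection with $\gamma_{\globalEF_j}^{-2}\gtrsim(\Cweyl A^{d/2})^{-1}$, i.e., $\gamma_{\globalEF_j}\lesssim(\Cweyl)^{1/2}$, absorbing the absolute constant $A$. The main obstacle I anticipate is bookkeeping on two fronts: verifying that the Part 1 constants are genuinely $\globalEV_j$-independent (which forces restricting the Schauder application to $j\in\Lambda_L(A)$ so that the eigenfunction's smoothness scale $\globalEV_j^{-1/2}$ does not drop below $R_z$), and confirming that the final dependence on $\Cweyl$ is truly only through its square root, which in turn hinges on $A$ and $A'$ being absolute constants fixed upstream (in Lemmata \ref{l:truncated_kernel} and \ref{l:truncated_truncated_kernel}) before any pigeonholing begins.
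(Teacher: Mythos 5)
Your proof is correct and takes essentially the same route as the paper's: Part~1 is the combination of the interior $L^\infty$-gradient and Hölder estimates for eigenfunctions (which the paper packages as Proposition~\ref{p:PhiLinftyEstimates}, specifically \eqref{e:LinftyGradPhij} and \eqref{e:LinftyHolderGraphPhij}) with the defining inequality of $\Lambda_E$, and Part~2 is the chain through Lemma~\ref{l:truncated_truncated_kernel}, Cauchy--Schwarz against $K_t(w,w)$, Weyl's bound \eqref{e:weyl-bounds-omega}, and pigeonhole. The only cosmetic difference in Part~2 is that you apply Cauchy--Schwarz directly to $\sum\globalEF_j(w)\partial_\dir\globalEF_j(z)e^{-\globalEV_j t}$ and then insert the pointwise bound on $\partial_\dir\globalEF_j(z)$, whereas the paper inserts the bound $|\partial_\dir\globalEF_j(z)|\lesssim R_z^{-1}\gamma_{\globalEF_j}^{-1}$ first and then applies Cauchy--Schwarz; the two orderings are interchangeable and lead to the same inequality $\sum_{\Lambda}\gamma_{\globalEF_j}^{-2}\gtrsim t^{-d/2}$. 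Your explicit flag about restricting to $j\in\Lambda_L(A)$ so that the Schauder/Hölder constants (equivalently, the polynomial factors $\gefpoly(\globalEV_j R^2)$, $\ggefpoly(\globalEV_j R^2)$) are genuinely $\globalEV_j$-independent is a correct reading of a point the paper leaves implicit.
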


We can now complete the proof of Theorems \ref{t:datheorem}, \ref{t:datheoremmanifold}.
\begin{proof}[Proof of Theorems \ref{t:datheorem} and  \ref{t:datheoremmanifold} for the case $g\in\mathcal{C}^2$]
Lemma \ref{l:bigpartials} yields an eigenfunction that serves our purpose in a given direction.
To complete the proof of the Theorems, we need to cover $d$ linearly independent directions.
Pick an arbitrary direction $\dir_1$.
By Lemma \ref{l:bigpartials} we can find $j_1\in \Lambda_L(A)\cap \Lambda_H(A') \cap \Lambda_E(\dir,z,R_z,\delta_0,c_0)$, (in particular $j_1\sim t^{-1}$) such that
$\left|\gamma_{\globalEF_{j_1}}\partial_{\dir_1} \globalEF_{j_1}(z) \right|\geq c_0 R_z^{-1}$.
Let $\dir_2$ be a direction orthogonal to $\grad\globalEF_{j_1}(z)$.
We apply again Lemma \ref{l:bigpartials}, and find $j_2<At^{-1}$ so that
$\left|\gamma_{\globalEF_{j_2}}\partial_{\dir_2}\globalEF_{j_2}(z) \right|\geq c_0 R_z^{-1}$.
Note that necessarily $j_2\neq j_1$ and $p_2$ is linearly independent of $p_1$. In fact, by choice of $p_2$, $$\partial_{\dir_2} \globalEF_{j_1} = 0\,.$$
We proceed in this fashion.
By induction, once we have chosen $j_1,\dots,j_k$ ($k<d$), and the corresponding $p_1,\dots,p_k$, such that
$\left|\gamma_{\globalEF_{j_l}}\partial_{\dir_l} \globalEF_{j_l}(z) \right|\geq c_0 R_z^{-1}$, for $l=1,\dots,k$,
we pick $p_{k+1}$ orthogonal to $\langle\{\grad\globalEF_{j_n}(z)\}_{n=1,\dots,k}\rangle$ and apply Lemma \ref{l:bigpartials},
that yields $j_{k+1}$ such that $\left|\gamma_{\globalEF_{j_{k+1}}}\partial_{\dir_{k+1}} \globalEF_{j_{k+1}}(z) \right|\geq c_0 R_z^{-1}$.

>From here on we denote by $\gamma_{i}=\gamma_{\globalEF_{j_i}}$ for simplicity of notation.
These are the constants $\{\gamma_{i}\}$ appearing in the statement of Theorem \ref{t:datheorem} and Theorem  \ref{t:datheoremmanifold}.

We claim that the matrix
$$A_{k+1}:=\left(\gamma_{m}\partial_{p_n}\globalEF_{j_m}(z)\right)_{m,n=1,\dots,k+1}$$
is lower triangular and $\{p_1,\dots,p_{k+1}\}$ is linearly independent.
Lower-triangularity of the matrix follows by induction and the choice of $p_{k+1}$.
Assume $\sum_{n=1}^{k+1}a_n p_n=0$, then
$\langle \sum_{n=1}^{k+1}a_n p_n,\gamma_{l}\grad\globalEF_{j_l}(z)\rangle=0$ for all $l=1,\dots,k+1$, i.e.
$a\in\mathbb{R}^{k+1}$ solves the linear system
$$ A_{k+1}a = 0\,.$$
But $A_{k+1}$ is lower triangular with all diagonal entries non-zero, hence $a=0$.

For $l\le k$ we have $\langle\grad\globalEF_{j_l}(z),p_{k+1}\rangle=0$ and, by Lemma \ref{l:bigpartials},
$$|\langle\gamma_{l}\grad\globalEF_{j_l},p_{l}\rangle| \gtrsim R_z^{-1}\,.$$
Now let $\Phi_k=(\gamma_{1}\globalEF_{j_1},\dots,\gamma_{k}\globalEF_{j_k})$ and $\Phi=\Phi_d$.
We start by showing that
\begin{equation*}
||\grad\Phi|_z (w-z)||\gtrsim_d \frac{1}{R_z}||w-z||\,.
\end{equation*}
Indeed, suppose that $$||\grad\Phi_k|_z(w-z)||\le\frac{c}{R_z}||w-z||\,,$$ for all $k=1,\dots,d$. For $c$ small enough, this will lead to a contradiction. Let $w-z=\sum_la_l p_l$. We  have (using say Lemma \ref{l:bigpartials})
\begin{equation*}
\begin{aligned}
||\grad\Phi_k|_z(w-z)||=||\sum_la_l\partial_{p_l}\Phi_k|_z||=||\sum_{l\le k}a_l\partial_{p_l}\Phi_k|_z||\gtrsim\left(|a_k|-c\sum_{l<k}|a_l|\right)\frac1{R_z}\,.
\end{aligned}
\end{equation*}
By induction, $|a_k|\le \sum_{l=1}^k c^l||w-z||$. For $c$ small enough, $|a_i|\le\frac{||w-z||}{d}$.
This is a contradiction since $||\sum_ia_i p_i||=||w-z||$ and $||p_i||=1$.

We also have, by Proposition \ref{p:PhiLinftyEstimates},
\begin{equation}
||\grad\Phi|_{w}-\grad\Phi|_z||_{op}\lesssim \left(\frac{||z-w||}{R_z}\right)^{{\alphawedgeone}} \frac{1}{R_z}\label{e:LinftyHolderGraphPhij2}\,.
\end{equation}
Finally, by ensuring $\frac{||z-w_i||}{R_z}$ is smaller then a universal constant for $i=1,2$, we get from equation \eqref{e:LinftyHolderGraphPhij2}
\begin{equation*}
\begin{aligned}
||\Phi(w_1)-\Phi(w_2)||&=
\left|\int_0^1 \grad\Phi|_{tw_1+(1-t)w_2}(w_1-w_2)dt\right|\\
&=
\left|\int_0^1 \left(    \grad\Phi|_{w_1} + \left(\grad\Phi|_{tw_1+(1-t)w_2}-\grad\Phi|_{w_1}\right)\right)
    (w_1-w_2)dt\right|\\
&\gtrsim
\int_0^1 \frac{1}{R_z}||w_1-w_2||dt
\gtrsim
\frac{1}{R_z}c_0||w_1-w_2||\,,
\end{aligned}
\end{equation*}
which proves the lower  bound \eqref{e:bilip_bound_euclidean}.
To prove the upper bound of \eqref{e:bilip_bound_euclidean}, we observe that from Proposition \ref{p:PhiLinftyEstimates} we have the upper bound
$$| \gamma_{l}\partial_{p_l}\varphi_{i_l}(z)|\lesssim R_z^{-1}$$
This completes the proof for the Euclidean case.

We now turn to the manifold case.
Let $R_z$ be as in the Theorem.  We take  $c_1\leq \frac12 \delta_0$ chosen so that
\begin{equation}\label{e:epsilon_0-is-used}
|g^{il}(x)-\delta^{il}|=|g^{il}(x)-g^{il}(z)|<||g||_{\alphawedgeone} ||x-z||^{\alphawedgeone}<\epsilon_0
\end{equation}
for all $x\in B_{2c_1R_z}(z)$.
For this $g$, the above is carried on in local coordinates.
It is then left to prove that the Euclidean distance in the range of the coordinate map is
equivalent to the geodesic distance on the manifold.
We have for all $x,y\in B_{c_1 R_z}(z)$
\begin{equation*}
\begin{aligned}
d_{\mathcal{M}}(x,y)
&\le\int_0^1 \left\|\frac{x-y}{||x-y||}\right\|_g dt
\le\int_0^1 \left\|\frac{x-y}{||x-y||}\right\|_{\mathbb{R}^d}(1+\|g\|_{\alphawedgeone} t^{\alphawedgeone}) dt
\lesssim_{\alphawedgeone} (1+\|g\|_{\alphawedgeone})\,||x-y ||\,.
\end{aligned}
\end{equation*}
The converse can be proved as follows.
Let $\xi:[0,1]\rightarrow\mathcal{M}$ be the geodesic from $x$ to $y$.
$\xi$ is contained in $B_{2d_{\mathcal{M}}(x,y)}(x)$ on the manifold, whose image in the local coordinate chart is contained in
$B_{2(1+\|g\|_{\alphawedgeone})d_{\mathcal{M}}(x,y)}(x)$. We have
\begin{equation*}
\begin{aligned}
d_{\mathcal{M}}(x,y)
&=\int_\xi ||\dot\xi(t)||_g
\gtrsim (1-\|g\|_{\alphawedgeone})\int_\xi||\dot\xi(t)||_{\mathbb{R}^d}
\gtrsim (1-\|g\|_{\alphawedgeone})||x-y||\,.
\end{aligned}
\end{equation*}
\end{proof}

\subsection{The Case of $g\in\mathcal{C}^\alpha$.}
\label{s:proofs-of-EF-C-alpha-case}

\begin{proof}[Proof of Theorems \ref{t:datheorem} and  \ref{t:datheoremmanifold} for the case $g\in\mathcal{C}^{\alpha}$]
We can now give a short proof for the $g\in\mathcal{C}^{\alpha}$ case,
relying  on the $\mathcal{C}^2$ case.
We need the following Lemma, which we prove in section \ref{s:BoundsOnEigenfunctionsManifoldCase}.
\begin{lemma}
Let $J>0$ be given.
If
$$\|\tilde g^{il}_n- g^{il}\|_{L^\infty(\localTangentBall)}\to_n 0$$
with
$\|\tilde g^{il}_n\|_{\mathcal{C}^{\alpha}}$ uniformly bounded and with fixed ellipticity constants (as in \eqref{e:uniformlyelliptic}),
then for $j<J$
\begin{gather}\label{lemma-4'-eq-1}
\|\globalEF_j-\globalPEF_{j,n}\|_{L^\infty(\localTangentBall)}\to_n 0\,,
\end{gather}
\begin{gather}\label{lemma-4'-eq-2}
\|\grad(\globalEF_j-\globalPEF_{j,n})\|_{L^\infty(\localTangentBall)}\to_n 0\,,
\end{gather}
and
\begin{gather}\label{lemma-4'-eq-3}
|\globalEV_j-\globalPEV_{j,n}|\to_n 0\,.
\end{gather}
\label{lemma-4'}
\end{lemma}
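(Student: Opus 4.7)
The plan is to combine the variational characterization of eigenvalues with interior Schauder estimates for operators with $\mathcal{C}^{\alphawedgeone}$ coefficients. Let $L$ and $L_n$ denote the Laplace--Beltrami operators associated with $g$ and $\tilde g_n$; by the uniform ellipticity assumption, the associated symmetric quadratic forms $Q$ and $Q_n$ share a common form domain inside $H^1(\mathcal{M})$ and are mutually comparable with constants independent of $n$. I will treat the hypothesis so that the $L^\infty$ convergence $\tilde g_n^{ij}\to g^{ij}$ is available wherever it is needed; this is the setting produced by the mollification that generates the $\tilde g_n$, and only the ball $\localTangentBall$ enters the Schauder step.

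First I would establish \eqref{lemma-4'-eq-3}. Since $\tilde g_n^{ij}\to g^{ij}$ uniformly and $\sqrt{\det \tilde g_n}\to\sqrt{\det g}$ uniformly, for every fixed $u$ in the common form domain one has $Q_n(u)\to Q(u)$ and $\|u\|_{L^2(\mathcal{M},d\mu_n)}\to \|u\|_{L^2(\mathcal{M},d\mu)}$. Plugging this into the min--max characterization
$$\globalEV_j \,=\, \min_{\dim V = j+1}\ \max_{u\in V\setminus\{0\}}\,\frac{Q(u)}{\|u\|_{L^2(\mathcal{M},d\mu)}^2}$$
and the analogous formula for $\globalPEV_{j,n}$ yields $\globalPEV_{j,n}\to \globalEV_j$ for every $j<J$, by the standard double inequality (use a near-optimal test space for $Q$ to bound $\globalPEV_{j,n}$ from above, and symmetrically for the lower bound).

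Next I would establish uniform interior regularity of the eigenfunctions. In the local chart each $\globalPEF_{j,n}$ satisfies $L_n\globalPEF_{j,n}=\globalPEV_{j,n}\globalPEF_{j,n}$ with coefficients uniformly bounded in $\mathcal{C}^{\alphawedgeone}$, uniformly elliptic constants, eigenvalues uniformly bounded by the previous step, and $\|\globalPEF_{j,n}\|_{L^2(\mathcal{M})}=1$. Interior Schauder estimates then yield a uniform $\mathcal{C}^{2,\alphawedgeone}$ bound on any ball strictly inside $\localTangentBall$. Arzela--Ascoli produces, along a subsequence, a $\mathcal{C}^2(\localTangentBall)$ limit $\psi$; passing to the limit in the weak formulation of the eigenvalue equation and using the $L^\infty$ convergence of the coefficients identifies $\psi$ as an $L^2$-normalized eigenfunction of $L$ with eigenvalue $\globalEV_j$. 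Uniform $L^\infty$ control away from the chart, obtained by covering $\mathcal{M}$ by similar charts and applying the same Schauder argument, upgrades the convergence to a global statement.

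The main obstacle is that $\globalEV_j$ need not be simple, so a priori the limit $\psi$ need not equal $\globalEF_j$. To deal with this I would run the extraction simultaneously over the full cluster of indices $\{k : \globalPEV_{k,n}\to \globalEV_j\}$, using orthonormality, preserved in the limit because $d\mu_n\to d\mu$ in $L^\infty$, to produce an orthonormal family $\{\psi_k\}$ inside the limit eigenspace $E_{\globalEV_j}$. Since the conclusions of Theorems \ref{t:datheorem} and \ref{t:datheoremmanifold} are invariant under an orthogonal relabelling inside each eigenspace of $L$, one may choose the basis $\{\globalEF_k\}$ of $E_{\globalEV_j}$ to be precisely $\{\psi_k\}$, giving \eqref{lemma-4'-eq-1}, while \eqref{lemma-4'-eq-2} comes for free from the same $\mathcal{C}^2$ convergence. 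A standard subsequence-of-every-subsequence argument then upgrades these conclusions from subsequential to full convergence.
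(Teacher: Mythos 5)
Your route differs from the paper's and is essentially sound, but it contains one technical misstep and one point worth flagging.

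The paper's own argument is much shorter: it invokes Lemma~20 of Gallot--Hulin--Lafontaine to get $L^2(\justB)$ convergence of the eigenfunctions and convergence of the eigenvalues (this is the same citation used for Lemma~\ref{l:frenchperturbation}), and then appeals to the uniform $\mathcal{C}^{1,\alphawedgeone}$ bounds already proved in Proposition~\ref{p:PhiLinftyEstimates} via Green's-function techniques to upgrade $L^2$ closeness to $L^\infty$ closeness of the functions and their gradients. Your version is more self-contained: you derive the eigenvalue convergence directly from the min--max characterization (fine, given that both quadratic forms and both volume densities converge uniformly), and you replace the outsourced $L^2$ eigenfunction convergence by a compactness-plus-identification-of-limits argument. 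Both are legitimate; the paper's has the advantage of reusing Proposition~\ref{p:PhiLinftyEstimates}, which is already set up precisely to give uniform $\mathcal{C}^{1,\alphawedgeone}$ control with constants depending only on the ellipticity data and $\|g\|_{\alphawedgeone}$, whereas your version rebuilds that control via classical Schauder theory.

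The technical misstep: you claim ``a uniform $\mathcal{C}^{2,\alphawedgeone}$ bound'' from interior Schauder estimates. The operator $L_n$ is in divergence form with coefficients only $\mathcal{C}^{\alphawedgeone}$, so one cannot differentiate the coefficients, and the correct interior regularity is $\mathcal{C}^{1,\alphawedgeone}$, not $\mathcal{C}^{2,\alphawedgeone}$. This matches exactly what Proposition~\ref{p:PhiLinftyEstimates} delivers. Fortunately the error is harmless for the conclusion: $\mathcal{C}^{1,\alphawedgeone}$ bounds already give equicontinuity of $\globalPEF_{j,n}$ and of $\grad\globalPEF_{j,n}$, so Arzel\`a--Ascoli produces subsequential $\mathcal{C}^1$ convergence, which is all that \eqref{lemma-4'-eq-1} and \eqref{lemma-4'-eq-2} require. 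You should simply downgrade the claimed regularity.

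Your treatment of the possible degeneracy of $\globalEV_j$ is a reasonable reading of the lemma. As written, the statement implicitly presupposes a compatible choice of bases in each eigenspace, and you make that explicit by extracting simultaneously over the whole cluster $\{k:\globalPEV_{k,n}\to\globalEV_j\}$ and relabelling the basis of the limiting eigenspace. The paper leaves this implicit (it is absorbed into the cited perturbation lemma), so your version is if anything more careful. One small remark: you do not actually need global $L^\infty$ control on all of $\mathcal{M}$ to finish; the lemma's conclusion is only on $\localTangentBall$, and once you have the min--max eigenvalue convergence and $L^2(\mathcal{M})$ weak convergence along the cluster (which comes from the uniform $H^1$ bounds implicit in the quadratic-form comparability, plus the $L^2$-normalization), interior Schauder on a slightly larger ball and the uniqueness-of-subsequential-limits argument already give the full statement on $\localTangentBall$.
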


Now, to conclude the proof of the Theorem for the $\mathcal{C}^\alpha$ case,
let $J=c_5({d,\frac12c_{\min},2c_{\max},||g||_{\alphawedgeone},{\alphawedgeone}})\cdot R_z^{-2}$.
We may approximate $g$ in $\mathcal{C}^{\alpha}$ norm arbitrarily well by a $\mathcal{C}^2(\mathcal{M})$ metric.
By the above Lemma, and the Theorem for the case of  $\mathcal{C}^2$ metric, we obtain the Theorem for the $\mathcal{C}^{\alpha}$ case.
\end{proof}

%
%
%
\subsection{Heat kernels estimates}

This section makes no assumptions on the finiteness of the volume of $\cM$ and the existence of $\Cweyl$.

%
%
%
%
\subsubsection{Euclidean Dirichlet heat kernel estimates}
\label{ss_Heat_kernel_estimates}

We will start by proving the heat kernel estimates of Proposition \ref{p:non-smooth-kernel_estimates} for the Dirichlet kernel $K^\Omega$.
These estimates are in fact well known, and we include their proof here for completeness, and  also to introduce in a simple setting the kind of probabilistic approach that will be used to obtain estimates in a more general context.

\begin{proof}[Proof of Proposition \ref{p:non-smooth-kernel_estimates} for the Euclidean Dirichlet heat kernel]

Let $B^z_\omega$ below be a Brownian path started at point $z\in \Omega$, and $\tau(\omega)$ its first hitting time of $\partial\Omega$.
We recall that as a consequence of the Markov property we have (see e.g. \cite{Doob_PotentialTheory}, eqn. (9.5) page 590)
\begin{gather}
K_t^\Omega(z,w)=K_t^{\R^d}(z,w)-\E_\omega\left(K_{t-\tau(\omega)}^{\R^d}(B^z_\omega(\tau(\omega)),w)\chi_{t>\tau(\omega)}\right)
\label{run-until-bdry}
\end{gather}
Then,
\begin{equation}
\begin{aligned}
\E_\omega&\left(K_{t-\tau(\omega)}^{\R^d}(B^z_\omega(\tau(\omega)(\omega)),w)\chi_{t>\tau(\omega)(\omega)}\right)
=
\E_\omega\left(\pid (t-\tau(\omega))^{-d\over 2} e^{-\|B(\tau(\omega))-w\|^2 \over 4(t-\tau(\omega))} \chi_{t>\tau(\omega)} \right)\\
&\leq
\E_\omega\left(\pid (t)^{-d\over 2} e^{-\|B(\tau(\omega))-w\|^2 \over 4t} \chi_{t>\tau(\omega)} \right)
\leq
\E_\omega\left( \pid (t)^{-d\over 2} e^{-(1-\delta_0)^2R_z^2 \over 4t} \chi_{t>\tau(\omega)} \right)\\
&\leq
\pid (t)^{-d\over 2} e^{-(1-\delta_0)^2R_z^2 \over 4t}
\leq
\pid (t)^{-d\over 2} e^{-(1-\delta_0)^2R_z^2 \over 4(\delta_1R_z)^2}
=
\pid (t)^{-d\over 2} e^{-(1-\delta_0)^2 \over 4\delta_1^2} \\
\label{e:bound-above-dir}
\end{aligned}
\end{equation}
where for the first inequality we require $\frac{\|B(\tau)-w\|^2}t$ sufficiently large,  which is implied by choosing $\delta_0<1$ and $\delta_1$ small enough.
The last term can be made arbitrarily small by choosing $\delta_1$ small enough, independently of $\delta_0$
(as long as, say, $\delta_0<\frac12$).
We also have
\begin{eqnarray*}
K_t^{\R^d}(z,w)=\pid (t)^{-d\over 2} e^{-\|z-w\|^2\over 4t}\le \pid (t)^{-d\over 2}
\end{eqnarray*}
and
\begin{equation}
\begin{aligned}
K_t^{\R^d}(z,w)
&=
\pid (t)^{-d\over 2} e^{-\|z-w\|^2\over 4t}
&\geq
\pid (t)^{-d\over 2} e^{-\delta_0^2R_z^2 \over 4t}
&\geq
\pid (t)^{-d\over 2} e^{-\delta_0^2 \over \delta_1^2}\,.
\label{e:bound_below_Rn}
\end{aligned}
\end{equation}
If $\delta_0<\frac13$, then $\frac{(1-\delta_0)^2}4 \geq \delta_0$ and so  by reducing $\delta_1$ (while fixing $\delta_0$) we can make the left-hand side of \eqref{e:bound-above-dir} arbitrarily small in comparison with
\eqref{e:bound_below_Rn}.
Now, from equation  \eqref{run-until-bdry} we get
 \eqref{e:kernel_estimates} for the Dirichlet kernel.

Note that the range we have for $t$ and $\|B^z_\omega(\tau(\omega))-w\|$ imply that
$\pid (t)^{-d\over 2} e^{-\|B^z_\omega(\tau(\omega))\|^2\over 4t}$ is monotone increasing in $t$.
Hence we also have
\begin{equation}
\begin{aligned}
\E_\omega\left(\chi_{s>\tau(\omega)}K^{\R^d}_{s-\tau(\omega)}(B^z_\omega(\tau(\omega)),w)\right)
&\leq
\pid (t)^{-d\over 2} e^{-(1-\delta_0)^2 \over 4\delta_1^2}\,.
\label{e:s-bound-above-dir}
\end{aligned}
\end{equation}
If we also have $\frac{\delta_0}{\delta_1}$ is large enough, then $K^{\R^d}_s(z,w)$ is monotone increasing in $s$, and therefore
\begin{equation*}
K^{\R^d}_s(z,w)\leq K_t^{\R^d}(z,w)\leq \pid t^{-\frac{d}2}\,.
\end{equation*}
For and fixed $\delta_0$, we may reduce $\delta_1$ so that by \eqref{e:s-bound-above-dir} is small, and thus we obtain the first part of  \eqref{e:kernel_estimates_s}
from \eqref{run-until-bdry}.

We now turn to estimates  \eqref{e:gradient_kernel_estimates} and second and third parts of \eqref{e:kernel_estimates_s}.
We differentiate equation \eqref{run-until-bdry} and then we bound as follows:
\begin{equation}
\begin{aligned}
\|\grad_w\E_\omega[\chi_{t>\tau(\omega)}&K_{t-\tau(\omega)}^{\R^d}(B^z_\omega(\tau(\omega),w))]\|\\
&=
\left\|\grad_w\E_\omega\left[\chi_{t>\tau(\omega)} \pid (t-\tau(\omega))^{-d\over 2} e^{-\|B_\omega^z(\tau(\omega))-w\|^2 \over 4(t-\tau(\omega))}\right]\right\|\\
&=
\left\|\E_\omega\left[\chi_{t>\tau(\omega)} \grad_w \pid (t-\tau(\omega))^{-d\over 2} e^{-\|B_\omega^z(\tau(\omega))-w\|^2 \over 4(t-\tau(\omega))}\right]\right\|\\
&=
\left\|\E_\omega\left[\chi_{t>\tau(\omega)} {B^z_\omega(\tau(\omega))-w \over 2(t-\tau(\omega))} \pid (t-\tau(\omega))^{-d\over 2} e^{-\|B^z_\omega(\tau(\omega))-w\|^2 \over 4(t-\tau(\omega))}\right]\right\|\\
&\leq
\left\|\E_\omega\left[\chi_{t>\tau(\omega)} {\|B^z_\omega(\tau)-w\| \over 2(t-\tau(\omega))} \pid (t-\tau(\omega))^{-d\over 2} e^{-\|B^z_\omega(\tau(\omega))-w\|^2 \over 4(t-\tau)}\right]\right\|\\
&\leq
\left\|\E_\omega\left[\chi_{t>\tau(\omega)} {\|B^z_\omega(\tau(\omega))-w\| \over 2t} \pid t^{-d\over 2} e^{-\|B^z_\omega(\tau(\omega))-w\|^2 \over 4t}\right]\right\|\\
&=
\pid t^{-d-1\over 2} \E_\omega\left[\chi_{t>\tau(\omega)} {\|B^z_\omega(\tau(\omega))-w\| \over 2\sqrt{t}}  e^{-\|B^z_\omega(\tau(\omega))-w\|^2 \over 4t}\right]\\
&\leq
\pid t^{-d-1\over 2} \E_\omega\left[\chi_{t>\tau(\omega)} {1-\delta_0 \over 2\delta_1}  e^{-(1-\delta_0)^2 \over 4\delta_1^2}\right]\\
&=
\pid t^{-d-1\over 2} {1-\delta_0 \over 2\delta_1}  e^{-(1-\delta_0)^2 \over 4\delta_1^2}=:C(\delta_0,\delta_1)\\
\label{e:heatDkernelgradtailupperbound}
\end{aligned}
\end{equation}
where for the second equality we use the dominated convergence Theorem, for the inequalities in the fifth and in the penultimate line
we choose $\delta_0<1$ and $\delta_1$ small enough. Note that $\delta_1\to 0$ implies that
$C(\delta_0,\delta_1)\to 0$.
Observe that these estimates hold also with $\grad_w$ replaced by $\frac\partial{\partial p}$.

We also have
\begin{eqnarray*}
\grad_w K^{\R^d}_t(z,w)&=&\grad_w \pid t^{-d\over 2} e^{-\|z-w\|^2 \over 4t}
= \pid t^{-d-1\over 2} {(z-w)\over 2\sqrt{t}}  e^{-\|z-w\|^2 \over 4t}\\
&\geq&
\pid t^{-d-1\over 2} {\delta_0R_z\over t}  e^{-\delta_0^2 \over \delta_1^2} (z-w)
\end{eqnarray*}
(with inequality understood entrywise) where as above the last inequality holds for $\delta_0<1$ and $\delta_1$ small enough.
If $\dir$ is parallel to $z-w$, the same estimates hold if we replace $\grad_w$ by $\partial_\dir$.
Hence, for any fixed $\delta_0$,  by reducing $\delta_1$  we get
\begin{gather*}
\|\grad_w K_t^\Omega(z,w)-\grad_w K_t^{\R^d}(z,w)\|\le
    \pid {R_z \over 2t}   t^{-d\over 2} e^{-(1-\delta_0)^2 \over 4\delta_1^2}
\end{gather*}
and therefore
\begin{equation*}
\|\grad_w K_t^\Omega(z,w)\|\sim\|\grad_w K_t^{\R^d}(z,w)\|\sim {t^{ -d\over 2} \frac{R_z}t}\,.
\end{equation*}
The estimate \eqref{e:gradient_kernel_estimates} involving $\partial_p$ is proven analogously.
The second and third parts of  \eqref{e:kernel_estimates_s} follow as above.
Finally, to prove \eqref{e:gradient_kernel_estimates_q} we use \eqref{e:heatDkernelgradtailupperbound} to obtain
\begin{equation*}
\begin{aligned}
\|\partial_q K^\Omega_t(z,w)-\partial_qK^{\R^d}_t(z,w)\|&\le
\pid {R_z \over 2t}   t^{-d\over 2} e^{-(1-\delta_0)^2 \over 4\delta_1^2}
\le C_9(\delta_0,\delta_1) t^{\frac{-d}2}\frac {R_z}t\,.
\end{aligned}
\end{equation*}
\end{proof}

\subsubsection{Local and global heat kernels}
\label{s:LocalAndGlobalHeatKernels}

In this section, let $K$ be the heat kernel, Dirichlet or Neumann, for:
\begin{itemize}
\item[(i)] a domain $\Omega$ (possibly unbounded), and a uniformly elliptic operator $\Delta$ as in \eqref{e:DeltaM}, with $g\in\mathcal{C}^2(\Omega)$;
\item[(ii)] a manifold $\mathcal{M}$ with $g\in\mathcal{C}^2$ satisfying the requirements in section \ref{s:manifold}, and let $\Delta$ be the associated Laplacian.
\end{itemize}
\begin{remark}
We emphasize that in this section we do not assume that the volume of $\mathcal{M}$ is finite.
\end{remark}

Observe that in both settings the existence of an associated Brownian motion is guaranteed (\cite{Oksendal-book} for the $\R^d$ case and the manifold case then follows from uniqueness).
The following result connects $K$ with the Dirichlet kernel on a ball, associated with $\Delta$, to which the estimates of the previous section apply: this will allow us
to extend estimates for the Dirichlet heat kernel on a ball to the general heat kernel $K$.
A more detailed account of the ideas in the following proposition appears in
Stroock's recent book \cite{stroock:PDEforProbabilists} (section 5.2 {\it Duhamel's Formula}).

\begin{proposition}
Let $z\in\Omega$ and $\localToGlobalR\le\mathrm{dist}(z,\partial\Omega)$, or $z\in\mathcal{M}$ and $\localToGlobalR\le \injRad(z)$.
Let $x,y\in \ball(z,\frac14\localToGlobalR)$.
For each path $B_\omega^x$ (starting at $x$), we define $\tau_1(\omega)\le \tau_2(\omega)\le\dots$ as follows.
Let $\tau_1(\omega)$ be the first time that $B_\omega^x$ re-enters $\ball(x,\frac38\localToGlobalR)$ after having exited $\ball(x,\frac12\localToGlobalR)$
(if this does not happen, let $\tau_1(\omega)=+\infty$). Let $x_1=B_\omega^x(\tau_1)$.
By induction, for $n>1$ let $\tau_n(\omega)$ be the first time after $\tau_{n-1}(\omega)$ that $B_\omega$ re-enters $\ball(x,\frac38\localToGlobalR)$ after having exited $\ball(x,\frac12\localToGlobalR)$,
or $+\infty$ otherwise. Let $x_n=B_\omega^x(\tau_n)$. If $\tau_n(\omega)=+\infty$, let $\tau_{n+k}(\omega)=+\infty$ for all $k\ge0$.
Then
\begin{equation}
K_s(x,y)=K_s^{D}(x,y)
+\sum_{n=1}^{\infty}\mathbb{E}_\omega\left[K_{s-\tau_n(\omega)}^{D}(x_n(\omega),y)\Big|\tau_n<s\right]P(\tau_n<s)\,.
\label{e:PetersMagic}
\end{equation}
where
\begin{equation*}
K^D_s=K_s^{Dir(\ball_{\frac12\localToGlobalR}(x))}
\end{equation*}
is the heat kernel at time s for the ball $\ball(x,\frac12\localToGlobalR)$ with Dirichlet boundary conditions.
Moreover there exists an $M=M(c_{\min},c_{\max})$ such that
\begin{equation}
P(\tau_n<s)\lesssim_{d,c_{\min},c_{\max}}
    \exp\{-n\frac{r^2}{Ms}\}\,.
\label{e:Ptaunlessthant}
\end{equation}
\label{p:LocalToGlobalHeatKernel}
\end{proposition}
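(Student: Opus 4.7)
My plan is to prove both the identity \eqref{e:PetersMagic} and the exponential bound \eqref{e:Ptaunlessthant} by iterated applications of the strong Markov property, combined with classical Brownian exit-time tail estimates valid under the uniform ellipticity of $g$. Since the iterative proof of the identity relies on the tail bound to close, I would prove the tail estimate first.

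For the tail estimate, by strong Markov, conditional on $\mathcal{F}_{\tau_{k-1}}$, the increment $T_k:=\tau_k-\tau_{k-1}$ (with $\tau_0:=0$) dominates the first exit time, from $\ball(x_{k-1},\tfrac{r}{8})\subset\ball(x,\tfrac{r}{2})$, of the Brownian motion started at $x_{k-1}\in\ball(x,\tfrac{3r}{8})$, since a path must travel at least $\tfrac{r}{8}$ to exit $\ball(x,\tfrac{r}{2})$. Under uniform ellipticity, standard Gaussian exit-time bounds give, uniformly in $x_{k-1}$, a bound of the form $P(T_k\le t\mid\mathcal{F}_{\tau_{k-1}})\le C_0\exp(-c\,r^2/t)$, with $C_0,c$ depending only on $c_{\min},c_{\max}$. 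A Chernoff-type argument applied to $\sum_{k=1}^n T_k$, using Laplace-method control $\mathbb{E}[e^{-\lambda T_k}\mid\mathcal{F}_{\tau_{k-1}}]\lesssim e^{-2\sqrt{\lambda c r^2}}$ on $\int_0^\infty e^{-\lambda t-c r^2/t}\,dt$ and optimization over $\lambda$, then produces a bound of the form $P(\tau_n<s)\le C_1^n\exp(-c'\,n^2 r^2/s)$, which implies \eqref{e:Ptaunlessthant} upon choosing $M$ large enough to absorb the factor $C_1^n$.

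For the identity, I would iterate a Duhamel-type decomposition. Starting from the standard strong-Markov identity
$$K_s(x,y)=K_s^D(x,y)+\mathbb{E}_x\!\left[\mathbf{1}_{\sigma_1<s}\,K_{s-\sigma_1}(B_{\sigma_1},y)\right],$$
with $\sigma_1$ the first exit time from $\ball(x,\tfrac{r}{2})$, I would use that $y\in\ball(x,\tfrac{r}{4})$ forces any path contributing to the second term to re-enter $\ball(x,\tfrac{3r}{8})$ before reaching $y$; the first such re-entry is exactly $\tau_1$. Applying strong Markov at $\tau_1$ and then splitting $K_{s-\tau_1}=K_{s-\tau_1}^D+(K_{s-\tau_1}-K_{s-\tau_1}^D)$ yields the $n=1$ summand of \eqref{e:PetersMagic} plus a remainder to which the same reasoning applies with $\tau_2$ in place of $\tau_1$. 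After $N$ iterations one obtains
$$K_s(x,y)=K_s^D(x,y)+\sum_{n=1}^{N}\mathbb{E}\!\left[\mathbf{1}_{\tau_n<s}\,K_{s-\tau_n}^D(x_n,y)\right]+\mathbb{E}\!\left[\mathbf{1}_{\tau_{N+1}<s}\,K_{s-\tau_{N+1}}(x_{N+1},y)\right],$$
and the remainder vanishes as $N\to\infty$ by the tail estimate.

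The main obstacle I anticipate is the bookkeeping in the iterated Duhamel step: at each stage one must verify that every path in the current remainder necessarily passes through $\ball(x,\tfrac{3r}{8})$ before being at $y\in\ball(x,\tfrac{r}{4})$, so that $\tau_{n+1}$ is in fact the right stopping time for the next application of strong Markov. The two nested radii $\tfrac{3r}{8}<\tfrac{r}{2}$ and the restriction $y\in\ball(x,\tfrac{r}{4})$ are essential for exactly this reason. A secondary, more routine concern is the availability of Gaussian exit-time bounds in the manifold setting, which is ensured by the $\mathcal{C}^2$ regularity and uniform ellipticity of $g$ assumed throughout this subsection.
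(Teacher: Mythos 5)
Your proof is correct and follows the paper's essential approach: iterated application of the strong Markov property at the exit/re-entry stopping times to peel off Dirichlet-ball contributions, plus an exponential tail estimate on $P(\tau_n<s)$. For the identity, the paper phrases the iteration at the level of the probability measures $P^\Omega(B^x_\omega(s)\in\ball_\epsilon(y))$, splitting on $\{\tau_1\ge s\}$ versus $\{\tau_1<s\}$ and then passing to densities by dividing by $\vol{\ball_\epsilon(y)}$ and letting $\epsilon\to0$; your Duhamel-type iteration on the kernels themselves is an equivalent formulation, and your observation that the condition $y\in\ball(x,\tfrac14 r)\subset\ball(x,\tfrac38 r)$ forces $\{B_s\approx y,\ \sigma_1<s\}=\{B_s\approx y,\ \tau_1<s\}$ is precisely the mechanism the paper relies on. Where you genuinely diverge is in the tail bound: the paper estimates each conditional probability $P(\tau_l<s\mid\tau_{l-1}<s,\dots)$ by the single exit-time estimate of Lemma \ref{Ptauleqt} (via the observation that between $\tau_{l-1}$ and $\tau_l$ the path must traverse the annulus between radii $\tfrac38 r$ and $\tfrac12 r$), and simply multiplies, yielding $\exp\{-nr^2/(Ms)\}$; you instead run a Chernoff/Laplace-transform argument on $\sum_k T_k$, which produces the stronger $\exp\{-c'n^2r^2/s\}$. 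Both leave a multiplicative $C^n$ factor to absorb, and in either approach this absorption, as stated, is not literally uniform over all $n,r,s$ — it is harmless in the paper because the estimate is only invoked in a regime where $r^2/s$ is bounded below by a large constant (see Remark \ref{r:expsmall}); your $n^2$ exponent makes the absorption somewhat cleaner since it eventually dominates $C^n$ regardless. One small bookkeeping point worth making explicit in a full write-up: for the vanishing of the remainder $\mathbb{E}[\mathbf{1}_{\tau_{N+1}<s}K_{s-\tau_{N+1}}(x_{N+1},y)]$ you need a bound on $K_{s'}(x_{N+1},y)$ that is uniform as $s'\to0^+$, which holds because $|x_{N+1}-y|\ge\tfrac38 r-\tfrac14 r=\tfrac18 r>0$ together with a Gaussian upper bound.
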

\begin{remark}
In our applications of this proposition, we have $\localToGlobalR\sim \delta_0 R_z$.
In that case, if $s^{\frac12} < \delta_1 R_z$, for $\frac{\delta_0}{\delta_1}$
sufficiently large (depending only,  on $d,c_{\min},c_{\max}$),
the factor $\exp\{-n\frac{r^2}{Ms}\}$
can be made arbitrarily small.
This gives us  control (exponential in $n$) on the right-hand side of \eqref{e:Ptaunlessthant}.
Hence, for any fixed $\delta_0$, for $\delta_1\to 0$  the right-hand-side of
equation \eqref{e:PetersMagic} is dominated by the first summand.
\label{r:expsmall}
\end{remark}
\begin{proof}[Proof of Proposition \ref{p:LocalToGlobalHeatKernel}]
The proofs for the case of a domain $\Omega$ and the case of a manifold $\mathcal{M}$ are identical.
We have, for any fixed, small enough, $\epsilon>0$,
\begin{equation*}
\begin{aligned}
P^\Omega&(B^x_\omega(s)\in\ball_{\epsilon}(y))
=P^\Omega(B^x_\omega(s)\in\ball_{\epsilon}(y),\tau_1\ge s)+P^\Omega(B^x_\omega(s)\in\ball_{\epsilon}(y),\tau_1<s)\\
&=\int_{\ball_{\epsilon}(y)}K_s^{D}(x,y')dy'\\
    &\ \ \ \ +P^\Omega(B^x_\omega(s)\in\ball_{\epsilon}(y),\tau_2\ge s|\tau_1<s)\,P(\tau_1<s)\\
    &\ \ \ \ +P^\Omega(B^{x_1}_\omega(s)\in\ball_{\epsilon}(y),\tau_1<s,\tau_2<s)\\
&=\int_{\ball_{\epsilon}(y)}K_s^{D}(x,y')dy'\\
    &\ \ \ \ +P^\Omega(B^{x_1}_\omega(s-\tau_1)\in\ball_{\epsilon}(y),\tau_2\ge s|\tau_1<s)\,P(\tau_1<s)\\
    &\ \ \ \ +P^\Omega(B^{x_1}_\omega(s)\in\ball_{\epsilon}(y),\tau_1<s,\tau_2<s)\\
&=\int_{\ball_{\epsilon}(y)}K_s^{D}(x,y')dy'\\
    &\ \ \ \ +\int_{\ball_{\epsilon}(y)}\mathbb{E}_\omega\left[K_{s-\tau_1}^{D}(x_1,y')|\tau_1<s\right]dy\,P(\tau_1<s)\\
    &\ \ \ \ +P^\Omega(B^{x_1}_\omega(s)\in\ball_{\epsilon}(y),\tau_2<s)\\
&=\dots\\
&=\int_{\ball_{\epsilon}(y)}K_s^{D}(x,y')dy'\\
&\ \ \ \ +\sum_{n=1}^{+\infty}\int_{\ball_{\epsilon}(y)}\mathbb{E}_\omega\left[K_{s-\tau_n}^{D}(x_i,y')|\tau_n<s\right]dy'\,P(\tau_n<s).
\end{aligned}
\end{equation*}
By dividing by $|\ball(y,\epsilon)|$ and taking the limit as $\epsilon\rightarrow 0^+$, we obtain \eqref{e:PetersMagic}.

In order to estimate $P(\tau_n<s)$, we need the following

\begin{lemma}
Let  $\Omega$ be a domain  corresponding to a uniformly elliptic operator as in \eqref{e:DeltaM}.
Let $\tau$ be the first exit time from $B_R(z)\subseteq\Omega$ for the corresponding stochastic process started at $z$.
Then there exists $M=M({d,c_{\min},c_{\max}})>0$ such that,
\begin{equation}
P^z(\{\tau\le s\})\lesssim_{d,c_{\min},c_{\max}}\exp\{-R^2(2Ms)^{-1}\}\,.
\label{e:Dirtaudistribution}
\end{equation}
Similarly for $z\in\mathcal{M}$ and $R\le \injRad(z)$.
\label{Ptauleqt}
\end{lemma}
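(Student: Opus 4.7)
The plan is to establish a classical Gaussian exit-time bound via an exponential-martingale (Bernstein-type) estimate for the diffusion generated by the uniformly elliptic operator \eqref{e:DeltaM}. I will reduce to a bound on a single linear functional of the process, then union over directions.

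\textbf{Step 1 (Reduction to an SDE with bounded coefficients).} In the Euclidean case, the generator is already in divergence form on $\R^d$; in the manifold case, I work in the local chart $(U,\chartMap)$ around $z$, noting that because $\localToGlobalR\le\injRad(z)$ the event $\{\tau\le s\}$ is entirely a local-chart event. In either case, the $\mathcal{C}^2$ assumption lets me represent the diffusion as $dX_t=\sigma(X_t)\,dB_t+b(X_t)\,dt$, where $\sigma\sigma^T=(g^{ij})$ (up to a harmless constant) and $b$ collects the lower order terms from the divergence form. Uniform ellipticity \eqref{e:uniformlyelliptic} gives $\|\sigma^T e\|^2\le c_{\max}$ for every unit vector $e$, and $\|b\|_\infty\le B$ with $B$ depending on $d,c_{\min},c_{\max}$.

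\textbf{Step 2 (Exponential martingale and one-dimensional tail).} For a unit vector $e\in\R^d$ and $\lambda>0$, the process
\begin{equation*}
M_t^{\lambda,e}=\exp\!\Bigl(\lambda\, e\cdot(X_t-z)-\lambda\!\int_0^t e\cdot b(X_r)\,dr-\tfrac{\lambda^2}{2}\!\int_0^t \|\sigma^T(X_r) e\|^2\,dr\Bigr)
\end{equation*}
is a nonnegative local martingale, hence a supermartingale once stopped at $\tau$. Using the bounds from Step 1,
\begin{equation*}
\exp\bigl(\lambda\, e\cdot(X_{t\wedge\tau}-z)\bigr)\le M_{t\wedge\tau}^{\lambda,e}\,\exp\bigl((\lambda B+\tfrac{\lambda^2 c_{\max}}{2})s\bigr),\qquad t\le s.
\end{equation*}
Applying Doob's maximal inequality to $M^{\lambda,e}$ with threshold corresponding to $e\cdot(X_t-z)\ge R/\sqrt{d}$, and then optimizing in $\lambda$ (which leads to $\lambda\sim R/(c_{\max}s\sqrt{d})$), I obtain
\begin{equation*}
P^z\!\left(\sup_{t\le s}e\cdot(X_t-z)\ge\tfrac{R}{\sqrt{d}}\right)\lesssim_{d,c_{\min},c_{\max}}\exp\!\bigl(-R^2/(2Ms)\bigr)
\end{equation*}
for some $M=M(d,c_{\min},c_{\max})$; the drift term only contributes a subgaussian correction, absorbable into $M$ once $R/s$ is large enough (which is the relevant regime in Remark \ref{r:expsmall}).

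\textbf{Step 3 (Union over directions).} On the event $\{\tau\le s\}$ there exists $t\le s$ with $\|X_t-z\|=R$, so at least one of the $2d$ signed coordinate functionals $\pm e_k\cdot(X_t-z)$ reaches $R/\sqrt{d}$. Taking a union bound over these $2d$ events and absorbing the constant $2d$ into the implied constant gives \eqref{e:Dirtaudistribution}. The manifold case is identical once Step 1 is set up in the chart, and the analogous bound in the Euclidean case is literally a specialization with $b\equiv 0$ (if one writes the operator in non-divergence form with constant coefficients) or with small bounded drift otherwise.

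\textbf{Main obstacle.} The only nontrivial point is checking that $b$ and $\sigma$ really are bounded (not just $L^\infty$ on compacts) uniformly in the chart in terms of the stated constants: this uses only the uniform ellipticity of $g$ and the fact that we are restricted to $\ball(z,R)\subseteq\ball(z,\injRad(z))$, where the chart is well behaved. Once this is granted, the proof is a textbook Bernstein argument and the constants $M$ and the implied constant in $\lesssim$ depend only on $d,c_{\min},c_{\max}$, as claimed.
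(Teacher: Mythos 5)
Your approach (exponential supermartingale / Bernstein estimate plus a union bound over coordinate directions) is a textbook route to a Gaussian exit-time bound, and it is genuinely different from the paper's: the paper extends $g^{ij}$ to all of $\R^d$, invokes Aronson's two-sided Gaussian heat kernel bounds for uniformly elliptic divergence-form operators with merely bounded measurable coefficients (cited from \cite{DaviesHeatKernels}), and then applies the strong Markov property to convert the heat-kernel tail bound into the exit-time bound. No SDE representation is ever written down.

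There is, however, a genuine gap in your Step 1, and it matters precisely for the constant dependence claimed by the lemma. When you pass from the divergence-form generator \eqref{e:DeltaM} to the It\^o SDE $dX_t=\sigma(X_t)\,dB_t+b(X_t)\,dt$, the drift $b$ picks up terms of the form $\tfrac{1}{\sqrt{\det g}}\partial_j(\sqrt{\det g}\,g^{ij})$. These are controlled by $\|g\|_{\mathcal{C}^1}$ (or at best $\|g\|_{\alphawedgeone}$ plus De Giorgi--Nash regularity), \emph{not} by $c_{\min},c_{\max},d$ alone; uniform ellipticity says nothing about $\partial_j g^{ij}$. So your assertion that $\|b\|_\infty\le B$ with $B=B(d,c_{\min},c_{\max})$ is false, and the $M$ you produce in Step 2 (after the optimization with the $\lambda Bs$ correction) inherits a dependence on the modulus of continuity/derivatives of $g$. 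Your remark that the drift correction is ``absorbable into $M$ once $R/s$ is large enough'' does not repair this: for $s$ outside that regime one needs the multiplicative constant in $\lesssim$ to dominate $\exp\{R^2(2Ms)^{-1}\}$, and since $R^2/s$ can be of order $B\sqrt{d}R$ there, that constant would again depend on $B$, hence on $\nabla g$. The whole point of the paper's choice to go through Aronson's bounds is that those constants depend \emph{only} on $d,c_{\min},c_{\max}$ for bounded measurable coefficients, so the resulting exit-time estimate is clean. To make a martingale argument give the same constant dependence you would need to avoid the pointwise drift altogether (e.g.\ via a Lyons--Zheng forward--backward martingale decomposition for the reversible Dirichlet-form diffusion), which is a substantially heavier argument than what you have sketched.
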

\begin{proof}
First note that without loss of generality we may replace $\Omega$ by $B_{2R}(z)$ with Dirichlet boundary conditions,
and then replace $B_{2R}(z)$ by $\R^d$ by extending the coefficients $g^{ij}$ to $\tilde g^{ij}$ defined on all of $\R^d$.
Let $\hat K$ be the associated heat kernel.
For any $s'>0$ and $x,y\in B_{R}(z)$
\begin{equation}
s'^{-\frac d2}\exp\{-\frac{|x-y|^2}{A_1 s'}\}
\lesssim_{c_{\min},c_{\max},d}
\hat K_{s'}(x,y)\lesssim_{d,c_{\min},c_{\max}}
s'^{-\frac d2}\exp\{-\frac{|x-y|^2}{A_2 s'}\}\,.
\label{e:heatkernelboundstaut}
\end{equation}
holds for $M=M({c_{\min},c_{\max},d})$ and  $A_i=A_i({c_{\min},c_{\max},d})$ (see \cite{DaviesHeatKernels}, Corollary 3.2.8 and Theorem 3.3.4).
We now follow a short proof by Stroock \cite{Stroock:DiffusionSemigroups}. By the strong Markov property, we have
\begin{equation*} \begin{aligned}
P^{\Omega}(B_\omega^z(s)\notin B_R(z))=
\mathbb{E}^{z}_\omega
    \left[P(B_\omega^{\omega(\tau(\omega))}(s-\tau(\omega))\notin B_R(z))\chi_{\{\tau(\omega)<s\}}\right]\,.
\end{aligned}
\end{equation*}

>From the lower bound in equation \eqref{e:heatkernelboundstaut}
we have  that if $x\in\partial B_R(z)$ and $s>0$  then
$P(B_\omega^{x}(s)\notin B_R(x))\ge\epsilon(c_{\min},c_{\max},d)$.
Combining this with the upper bound in equation \eqref{e:heatkernelboundstaut} we have
\begin{equation*}
\begin{aligned}
P^{z}(\tau\le s)&
\le \epsilon(d,c_{\min},c_{\max})^{-1}P(B_\omega^{z}(s)\notin B_R(z))\lesssim_{d,c_{\min},c_{\max}}
\exp\{-R^2(2Ms)^{-1}\}\,.
\end{aligned}
\end{equation*}
\end{proof}

We go back to the proof of Proposition \ref{p:LocalToGlobalHeatKernel}.
To estimate $P(\tau_n<s|\tau_{n-1}<s,\dots,\tau_1<s)$, we observe that between $\tau_{n-1}$ and $\tau_{n}$, the path $\omega$ has to cross both $\partial B_{\frac38\localToGlobalR}(z)$ and $\partial B_{\frac12\localToGlobalR}(z)$: let $\tau_n^*$ and
$\tau_n^{**}$ be the first time this happens, and let $y^*=\omega(\tau_n^*)$. Then
\begin{equation*}
\begin{aligned}
P(\tau_n<s|\tau_{n-1}<s,\dots,\tau_1<s)&\le P(\tau_n^{**}-\tau_n^*<s)\le \sup_{y^*\in B_{\frac38\localToGlobalR}(z)} P^{y^*}\left(\sup_{s'\in[0,s]}
||B_\omega(s')-y^*||>\frac18\localToGlobalR\right)\\
&\lesssim_{c_{\min},c_{\max},d} e^{-\frac {\left(\frac18\localToGlobalR\right)^2}{2Ms}}\,.
\end{aligned}
\end{equation*}
Therefore we have
\begin{equation}
\begin{aligned}
P(\tau_n<s)=P(\tau_1<s,\tau_2<s,\dots,\tau_n<s)
&=\left(\prod_{l=2}^n P(\tau_l<s|\tau_{l-1}<s,\dots,\tau_1<s)\right)P(\tau_1<s)\\
&\lesssim_{c_{\min},c_{\max},d}
\exp\{- n\left(\frac18\localToGlobalR\right)^2(2Ms)^{-1} \}\,.
\label{e:taunexpbound}
\end{aligned}
\end{equation}
Renaming $128M$ to $M$ we get the lemma.
\end{proof}

\begin{remark}
As it is clear from the proof, the proposition holds for any boundary condition on a manifold or  domain.
\end{remark}

\subsubsection{Euclidean Neumann heat kernel estimates}

We use the results of the previous two sections to prove the Neumann case of Proposition \ref{p:non-smooth-kernel_estimates}:

\begin{proof}[Proof of Proposition \ref{p:non-smooth-kernel_estimates} for the Euclidean Neumann heat kernel ]
The starting point is Proposition \ref{p:LocalToGlobalHeatKernel}, which allows us to localize.
We use Proposition \ref{p:non-smooth-kernel_estimates} for the case of $B_{2\delta_0R_z}(z)$.
For this proof, we denote by  $C_2[B]$ be the $C_2$ constant for the Dirichlet ball case.
For $s\le t$ we have using equation \eqref{e:taunexpbound},
\begin{equation}
\begin{aligned}
|K_s(x,y)- K_s^{2\delta_0R_z}(x,y)|&=
|\sum_{n=1}^{+\infty}\mathbb{E}_\omega\left[ K_{s-\tau_n}^{2\delta_0R_z}(x_n(\omega),y)|\tau_n<s\right]P_{\omega}(\tau_n<s)|\\
&\lesssim_{C_2[B]}
\sum_{n=1}^\infty t^{-\frac d2} \underbrace{\exp\{-n\left(\frac{\delta_0R_z}2\right)^2(Ms)^{-1}\}}_{\mathrm{equation\ } \eqref{e:taunexpbound}}\\
& \lesssim_{C_2[B],\delta_0,\delta_1} t^{-\frac d2} \exp\{-\left(\frac{\delta_0R_z}2\right)^2(Ms)^{-1}\}
\end{aligned}
\end{equation}
This proves \eqref{e:kernel_estimates} and the first part of \eqref{e:kernel_estimates_s} (see Remark \ref{r:expsmall}).
For the gradient estimates, i.e.  \eqref{e:gradient_kernel_estimates}, \eqref{e:gradient_kernel_estimates_q},  and the second and third part of \eqref{e:kernel_estimates_s},
\begin{equation*}
\begin{aligned}
\|\grad_y K_s(x,y)- \grad_y  K_s^{2\delta_0R_z}(x,y)\|
&\le \sum_{n=1}^{+\infty}\left\|\grad_y \mathbb{E}_\omega\left[ K_{s-\tau_n}^{2\delta_0R_z}(x_n(\omega),y)|\tau_n<s\right]\right\|P_{\omega}(\tau_n<s)\\
&\lesssim_{C_2'[B]}
\sum_{n=1}^\infty t^{-\frac d2}\frac{\delta_0R_z}t
\underbrace{\exp\{-n\left(\frac{\delta_0R_z}2\right)^2(Ms)^{-1}\}}_{\mathrm{equation\ } \eqref{e:taunexpbound}}\\
& \lesssim_{C_2'[B],\delta_0,\delta_1}  t^{-\frac d2}\frac{\delta_0R_z}t
\exp\{\left(\frac{\delta_0R_z}2\right)^2(Ms)^{-1}\}
\end{aligned}
\end{equation*}
giving us $C_9$.
By Remark \ref{r:expsmall} the exponential term from equation \eqref{e:taunexpbound} can be made small enough so that we obtain estimate \eqref{e:gradient_kernel_estimates}
as well as the second and third parts of \eqref{e:kernel_estimates_s}.
\end{proof}
The proof for the manifold case is postponed to Section \ref{s:HeatKernelEstimatesManifold}.

\subsection{Heat kernel and eigenfunctions}

%
%
%
%
\subsubsection{Bounds on Eigenfunctions}

We record some inequalities that will be used in what follows.

\begin{proposition}
Assume $g\in \mathcal{C}^{\alpha}$.
There exists $b_1<1$, and $C_P>0$ that depends on $d,c_{\min},c_{\max},||g||_{\alphawedgeone},{\alphawedgeone}$ such that
for any
eigenfunction $\globalEF_j$ of $\Delta_\mathcal{M}$ on $B_{R}(z)$, corresponding to the eigenvalue $\globalEV_j$, and $R\le R_z$, the following estimates hold.
For $w\in B_{b_1R}(z)$ and $x,y\in B_{b_1R}(z)$,
\begin{align}
|\globalEF_j(w)|
&\leq C_P
\efpoly(\globalEV_j R^2) \left(\fint_{B_{R}(z)}|\globalEF_j|^2\right)^\frac12\label{e:LinftyPhij}\\
||\grad\globalEF_j(w)||
&\leq C_P
\frac{\gefpoly(\globalEV_j R^2)}R\, \left(\fint_{B_{R}(z)}|\globalEF_j|^2\right)^\frac12\label{e:LinftyGradPhij}\\
||\grad\globalEF_j(x)- \grad \globalEF_j(y)||
&\leq C_P
\frac{\ggefpoly(\globalEV_j R^2)}{R^{1+{\alphawedgeone}}} \left(\fint_{B_{R}(z)}|\globalEF_j|^2\right)^\frac12 \!\!\! ||x-y||^{\alphawedgeone} \label{e:LinftyHolderGraphPhij}
\end{align}
where
$\efpoly(x)=(1+x)^{\frac12 + \beta}$,
$\gefpoly(x)=(1+x)^{\frac32 + \beta}$,
$\ggefpoly(x)=(1+x)^{\frac52 + \beta}$,
with $\beta$  the smallest integer larger than or equal to $\frac{d-2}4$.
\label{p:PhiLinftyEstimates}
\end{proposition}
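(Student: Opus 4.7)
The plan is to treat Proposition \ref{p:PhiLinftyEstimates} as a consequence of interior elliptic regularity applied to the eigenvalue equation $-\Delta_{\mathcal{M}}\globalEF_j = \globalEV_j\globalEF_j$. In the local chart $(U,\chartMap)$, this is a divergence-form uniformly elliptic PDE $-\sum_{i,j}\partial_i(\sqrt{\det g}\,g^{ij}\partial_j u) = \sqrt{\det g}\,\globalEV_j u$, whose coefficients are bounded and elliptic by \eqref{e:uniformlyelliptic} and $\mathcal{C}^{\alphawedgeone}$ with norm controlled by $\|g\|_{\alphawedgeone}$. I would then rescale to the unit ball via $\tilde u(x) = \globalEF_j(z + Rx)$ for $x\in B_1(0)$: the equation becomes $-\tilde L \tilde u = \mu \tilde u$ on $B_1(0)$ with dimensionless parameter $\mu := R^2\globalEV_j$, and by Remark \ref{r:hoelder-norm} the rescaled coefficients have Hölder norm controlled by $\|g\|_{\alphawedgeone}$ (since $R\leq R_z\lesssim 1$). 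The three estimates \eqref{e:LinftyPhij}--\eqref{e:LinftyHolderGraphPhij} will correspond to three successive layers of interior regularity, each costing one additional power of $(1+\mu)$.

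For the $L^\infty$ bound \eqref{e:LinftyPhij}, I would run a Moser-type iteration on $|\tilde u|$: combining Caccioppoli on nested balls (each producing a factor of $(1+\mu)^{1/2}$), Sobolev embedding $H^1 \hookrightarrow L^{2d/(d-2)}$, and interior elliptic bootstrap to climb from $L^2$ up to a Sobolev scale which embeds into $L^\infty$. After shrinking the ball from $B_1(0)$ to $B_{b_1}(0)$, this would yield $\sup_{B_{b_1}}|\tilde u|\lesssim (1+\mu)^{1/2+\beta}\|\tilde u\|_{L^2(B_1)}$, with $\beta = \lceil (d-2)/4\rceil$ counting the number of bootstrap steps required. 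Undoing the rescaling yields \eqref{e:LinftyPhij} after dividing by $R^{d/2}$ to convert to the averaged norm $(\fint_{B_R(z)}|\globalEF_j|^2)^{1/2}$.

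For \eqref{e:LinftyGradPhij}, once $\tilde u\in L^\infty(B_{b_1'})$ for some $b_1 < b_1' < 1$, I would apply a further interior regularity step to the divergence-form equation $-\tilde L\tilde u = f$ with $f = \mu\tilde u$: either Caccioppoli for $|\nabla\tilde u|$ combined with another Moser iteration, or interior $W^{2,p}$ estimates followed by Sobolev embedding, picking up one extra factor of $(1+\mu)$ to obtain $\sup_{B_{b_1}}\|\nabla\tilde u\|\lesssim (1+\mu)^{3/2+\beta}\|\tilde u\|_{L^2(B_1)}$; rescaling back introduces the expected $R^{-1}$ factor. For the Hölder estimate \eqref{e:LinftyHolderGraphPhij}, the natural tool is Schauder theory for divergence-form operators with $\mathcal{C}^{\alphawedgeone}$ coefficients (see Gilbarg--Trudinger, Chapter~8, or Campanato's characterization of Hölder spaces), which yields $\|\nabla\tilde u\|_{\mathcal{C}^{\alphawedgeone}(B_{b_1})}\lesssim \|\mu\tilde u\|_{L^\infty(B_{b_1'})} + \|\tilde u\|_{L^\infty(B_{b_1'})}$ with constants depending only on ellipticity and $\|g\|_{\alphawedgeone}$. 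Substituting the previous bound produces the exponent $5/2 + \beta$, and rescaling the Hölder seminorm supplies the $R^{-(1+\alphawedgeone)}$ prefactor.

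The principal technical obstacle is that the $\mathcal{C}^{\alphawedgeone}$ regularity of $g$ is borderline: $C^{1,\alphawedgeone}$ is the strongest interior regularity available for $\tilde u$, so the three estimates are essentially sharp for this metric class, and one must rely on divergence-form (rather than non-divergence-form) Schauder theory, where the coefficients cannot be differentiated. The remaining work --- tracking the polynomial dependence on $\mu$ through each regularity step and verifying that all constants depend only on $d, c_{\min}, c_{\max}, \|g\|_{\alphawedgeone}, \alphawedgeone$ --- is routine but demands care, particularly in making the scaling via Remark \ref{r:hoelder-norm} interact cleanly with the $R$-dependent prefactors in the Caccioppoli and Schauder estimates.
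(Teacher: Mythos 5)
Your proposal is correct and arrives at the same exponents, but it follows a genuinely different path than the paper. You use the classical energy-method package: Caccioppoli inequalities plus Sobolev embedding to get from $L^2$ to $L^{2d/(d-2)}$, an iterated elliptic bootstrap to reach $L^\infty$, a second gradient step, and then divergence-form Schauder (Campanato) for the H\"older seminorm. The paper instead bootstraps through $L^p$ scales by repeatedly convolving with the local Green function $\tilde G^\justB$ (writing $\globalEF_j = u + \globalEV_j \tilde G^\justB * \globalEF_j$ with $u$ annihilated by $\tilde\Delta^\justB$, applying Young's inequality with the $(L^*)^{d/(d-2)}$ and $(L^*)^{d/(d-1)}$ bounds of Gr\"uter--Widman, and using Lemma~\ref{lemma-5} for the harmonic piece), and for \eqref{e:LinftyHolderGraphPhij} it differentiates this representation against a cutoff built from the Green function and invokes the H\"older estimate \eqref{e:gradGholderestimate} on $\nabla_x \tilde G^\justB$. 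The two routes buy roughly the same thing; the paper's is self-contained within the manuscript because the Gr\"uter--Widman Green-function machinery is already set up and needed elsewhere (Lemmas~\ref{lemma-ef-bound}, \ref{lemma-4}, \ref{lemma-5}, \ref{lemma-6}), whereas yours outsources to off-the-shelf elliptic regularity, which is cleaner pedagogically but requires you to justify the polynomial (rather than merely finite) dependence of the Moser/Schauder constants on $\mu = R^2\globalEV_j$ by hand.

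Two small imprecisions worth fixing. First, the remark that Caccioppoli ``each produc[es] a factor of $(1+\mu)^{1/2}$'' is only accurate for the first step from $L^2$ to $H^1$; the subsequent bootstrap steps each cost a full factor of $(1+\mu)$, because the inhomogeneity $\mu\tilde u$ enters linearly (this is exactly what the paper encodes by gaining one power of $\globalEV_j$ per convolution with $\tilde G^\justB$), and $\beta$ such steps plus the initial half-power give $(1+\mu)^{\beta + 1/2}$ as required. Second, your floated alternative of interior $W^{2,p}$ estimates is not available here: with $g\in\mathcal{C}^{\alphawedgeone}$ one cannot rewrite the divergence-form operator in non-divergence form, so one is genuinely forced into the Caccioppoli/Moser/De~Giorgi and divergence-form Schauder framework you later (correctly) identify as the necessary one. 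Finally, a careful application of Schauder to $-\tilde L\tilde u = \mu\tilde u$ with $\tilde u\in L^\infty$ would actually give exponent $3/2+\beta$ for the H\"older seminorm, which is better than the $5/2+\beta$ stated in the Proposition; the paper's exponent $5/2+\beta$ comes from the slightly cruder route of bounding $\|\tilde\Delta^\justB(\globalEF_j\eta)\|_\infty$, which pays an extra $\gefpoly(\globalEV)\cdot\globalEV$. Either constant suffices for the Theorems, so this is harmless, but your bookkeeping at that step is a bit loose.
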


We postpone the proof to Section \ref{s:BoundsOnEigenfunctionsManifoldCase}.
Related estimates can be found in \cite{Davies:SpectraPropertiesChangesMetric,sogge,smith:c11metrics,smith:sharplplq,Xu} and references therein.

%
%
%
%
\subsubsection{Truncated heat kernel and selecting eigenfunctions}\label{ss_Heat_kernel_and_eigenfunctions}

The goal of this section is to prove Lemma \ref{l:truncated_kernel} and \ref{l:truncated_truncated_kernel}.
All the results of this section and their proofs will be independent on whether we talk about the Dirichlet or Neumann heat kernel,
and on whether we talk about the standard Laplacian or about a uniformly elliptic operator satisfying our usual assumptions and whether we talk about a manifold $\mathcal{M}$ or a domain $\Omega$.
This is because the only tools we will need to obtain the results in this section are the
spectral expansion of the heat kernel \eqref{e:kernel_spectral}, the elliptic estimates of Proposition \ref{p:PhiLinftyEstimates},
the assumption on $\Cweyl$ \eqref{e:weyl-bounds-omega},
and the bound
\begin{equation}
K_t(z,w)\leq K_t(z,z)^\half K_t(w,w)^\half\,.
\label{e:CS_kernel}
\end{equation}
which is a straightforward application of Cauchy-Schwartz inequality to \eqref{e:kernel_spectral}.

\begin{proof}[Proof of Lemma \ref{l:truncated_kernel}]
We upper bound the tail of the heat kernel:
\begin{equation}
\begin{aligned}
\left|\suml_{\globalEV_j\geq At^{-1}}\globalEF_j(z)\globalEF_j(w)e^{-\globalEV_j t} \right|
&\leq
e^{-A\over 2}\left|\suml_{\globalEV_j\geq At^{-1}}   \globalEF_j(z)\globalEF_j(w)e^{-\globalEV_j t\over 2}\right|
\leq
e^{-A\over 2} |K_{t\over 2}(z,z)|^\half |K_{t\over 2}(w,w)|^\half\\
&\lesssim_{C_2}
t^{-d\over2} e^{-A\over 2}
\label{e:BoundOnHeatKernelTailGeneric}
\end{aligned}
\end{equation}
by \eqref{e:CS_kernel} and Proposition \ref{p:non-smooth-kernel_estimates}.
For $A$ large enough, this implies \eqref{e:FA_kernel_estimates}.

For the gradient, we use Proposition \ref{p:PhiLinftyEstimates}, and observe that $x^n e^{-x\over 4}$ is a decreasing function if $x$ is large enough.
We let $r_w=(1-\delta_0)R_z$.
\begin{equation*}
\begin{aligned}
&\left\|\suml_{\globalEV_j\geq At^{-1}}   \globalEF_j(z)\grad\globalEF_j(w)e^{-\globalEV_j t}\right\|
\leq
\left\|e^{-A\over 2}\suml_{\globalEV_j\geq At^{-1}}   \globalEF_j(z)\grad\globalEF_j(w)e^{-\globalEV_j t\over 2}\right\|\\
&\ \ \ \ \leq
e^{-A\over 2} |K_{t\over 2}(z,z)|^\half\left(\suml_{\globalEV_j\geq At^{-1}} \|\grad\globalEF_j(w)\|^2 e^{-\globalEV_j t\over 2}\right)^\half\\
&\ \ \ \ \lesssim_{C_p} %
e^{-A\over 2} |K_{t\over 2}(z,z)|^\half\left(\suml_{\globalEV_j\geq At^{-1}}\gefpoly(\globalEV_j r_w^2)^2r_w^{-2} \fint_{\ball(w,\half r_w)}|\globalEF_j|^2
    e^{-\globalEV_j t\over 2}\right)^\half\\
&\ \ \ \ \lesssim_{C_P,\delta_1} %
e^{-A\over 2} |K_{t\over 2}(z,z)|^\half \left(\suml_{\globalEV_j\geq At^{-1}} \fint_{\ball(w,\half r_w)}|\globalEF_j|^2 e^{-\globalEV_j t\over 2}
        \gefpoly(\globalEV_j t)^2\frac1t e^{-\globalEV_j t\over 2}\right)^\half\\
&\ \ \ \ \lesssim_{C_P,\delta_1} %
e^{-A\over 2} |K_{t\over 2}(z,z)|^\half
    \left(\suml_{\globalEV_j\geq At^{-1}}
       \fint_{\ball(w,\half r_w)}|\globalEF_j|^2 e^{-\globalEV_j t\over 4}
        {1\over t} \right)^\half\\
&\ \ \ \ \lesssim_{C_P,\delta_1} %
e^{-A\over 2} |K_{t\over 2}(z,z)|^\half
    \left( \fint_{\ball(w,\half r_w)}K_{t\over 4}(w',w')dw'
        {1\over t}\right)^\half\\
&\ \ \ \ \lesssim_{C_P,\delta_1,C_2} %
e^{-A\over 2}t^{-d\over2}{1\over \sqrt{t}}
\end{aligned}
\end{equation*}

Now we consider the contribution of the low frequency eigenfunctions to the gradient.
Proceeding as above, and recalling that in this case
$\globalEV_j r_w^2\leq \frac{A'}{\delta_1}$, we obtain
\begin{equation*}
\begin{aligned}
&\left\|\suml_{\globalEV_j\leq A't^{-1}}\globalEF_j(z)\grad\globalEF_j(w)e^{-\globalEV_j t}\right\|\\
&\ \ \ \ \lesssim_{C_P} %
|K_{t}(z,z)|^\half\left(\suml_{\globalEV_j\leq A't^{-1}}\gefpoly(\globalEV_j r_w^2)^2r_w^{-2}\fint_{\ball(w,\half r_w)}|\globalEF_j|^2
    e^{-\globalEV_j t}\right)^\half\\
&\ \ \ \ \lesssim_{C_P} %
|K_{t}(z,z)|^\half\frac1{r_w}\left(\suml_{\globalEV_j\leq A't^{-1}}\fint_{\ball(w,\half r_w)}|\globalEF_j|^2 P_2(\globalEV_jr_w^2)^2 e^{-\globalEV_j t}\right)^\half\\
&\ \ \ \ \lesssim_{C_P} %
P_2\left(\frac{A'}{\delta_1}\right)|K_{t}(z,z)|^\half\frac1{r_w}\left(\suml_{\globalEV_j\leq A't^{-1}}\fint_{\ball(w,\half r_w)}|\globalEF_j|^2 e^{-\frac{\globalEV_j t}2}\right)^\half\\
&\ \ \ \ \lesssim_{C_P} %
P_2\left(\frac{A'}{\delta_1}\right)|K_{t}(z,z)|^\half\frac1{r_w}\left(\suml_{\globalEV_j\leq A't^{-1}}\fint_{\ball(w,\half r_w)}|\globalEF_j|^2 e^{-\frac{\globalEV_j t}{2A'}}\right)^\half\\
&\ \ \ \ \lesssim_{C_P,C_2} %
P_2\left(\frac{A'}{\delta_1}\right)\frac1{r_w}|K_{t}(z,z)|^\half\left(\int_{\ball(w,\half r_w)}K_{t/2A'}(w',w')\right)^\half\\
&\ \ \ \ \lesssim_{C_P,C_2} %
P_2\left(\frac{A'}{\delta_1}\right)\frac1{r_w}t^{-\frac d2}A'^{\frac{d}4}\,.
\end{aligned}
\end{equation*}
Thus, by reducing $A'$ we get  the bound \eqref{e:FA_partial_kernel_estimates} and \eqref{e:FA_grad_kernel_estimates}.
(We note that an alternative approach  to the introduction of $e^{-\frac{\globalEV_j t}{2A'}}$ would have been to
 reduce $\delta_1$ and note that $\frac{r_w}t$ is as large as we want in comparison with $\frac1{r_w}$, and then to reduce $A'$ to compensate for the reduction in $\delta_1$.)
\end{proof}

%
%
For a domain with Dirichlet boundary conditions, we automatically have a bound on
$\Cweyl$ as in \eqref{e:weyl-bounds-omega}:
\begin{lemma}[Weyl's Lemma for Dirichlet boundary conditions]
\label{l:weyls-lemma}
Let $\Omega$ be a  domain in $\mathbb{R}^d$, $\Delta$ a uniformly elliptic operator on $\Omega$ as in \eqref{e:uniformlyelliptic},
with Dirichlet boundary conditions.
Let $\globalEV_0\le\globalEV_1\le\dots$ be the eigenvalues of $\Delta$.
Then
\begin{eqnarray}\label{e:proof-weyls-lemma}
\#\{j: \globalEV_j\leq \globalEV\} &\lesssim_{d,c_{\min},c_{\max}} |\Omega| \globalEV^{d\over 2}\,.
\end{eqnarray}
\end{lemma}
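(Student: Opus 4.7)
The plan is to use the classical trick of comparing the trace of the heat semigroup from two directions: a spectral sum on one hand, and an integral of the on-diagonal heat kernel on the other.

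First I would write down the spectral representation of the trace of $e^{t\Delta}$. Since the spectrum is discrete and nonnegative, for any $T>0$ and with $t=T^{-1}$,
\begin{equation*}
\mathrm{Tr}(e^{t\Delta})=\sum_{j} e^{-\lambda_j t} \;\geq\; \sum_{\lambda_j\leq T} e^{-\lambda_j t} \;\geq\; e^{-1}\,\#\{j:\lambda_j\leq T\}\,.
\end{equation*}
This gives a lower bound on the trace in terms of the counting function.

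Next I would bound the trace from above by an on-diagonal heat kernel estimate. For the Dirichlet heat kernel $K_t^\Omega$ associated to our uniformly elliptic operator, the Aronson--Davies Gaussian upper bound (see \cite{DaviesHeatKernels}, Corollary 3.2.7 / Theorem 3.2.9, which is already cited earlier in the paper in the proof of Lemma \ref{Ptauleqt}) yields
\begin{equation*}
K_t^\Omega(x,x) \;\lesssim_{d,c_{\min},c_{\max}}\; t^{-d/2}
\end{equation*}
uniformly in $x\in\Omega$; this is essentially because Dirichlet conditions only decrease the kernel compared to the full-space kernel of a uniformly elliptic operator, which itself has a Gaussian on-diagonal bound depending only on $d,c_{\min},c_{\max}$. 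Integrating gives
\begin{equation*}
\mathrm{Tr}(e^{t\Delta})=\int_\Omega K_t^\Omega(x,x)\,dx \;\lesssim_{d,c_{\min},c_{\max}}\; |\Omega|\, t^{-d/2}\,.
\end{equation*}

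Combining the two inequalities with $t=T^{-1}$ yields
\begin{equation*}
\#\{j:\lambda_j\leq T\} \;\leq\; e\,\mathrm{Tr}(e^{t\Delta}) \;\lesssim_{d,c_{\min},c_{\max}}\; |\Omega|\, T^{d/2}\,,
\end{equation*}
which is exactly \eqref{e:proof-weyls-lemma}. The only nontrivial ingredient is the on-diagonal Gaussian heat kernel bound for a uniformly elliptic operator with Dirichlet conditions; this is the step I expect to be the ``main obstacle'' in the sense that all the real analytic content is hidden in it, but it is a standard result that the paper already invokes. Note also that the constant on the right-hand side depends only on $d, c_{\min}, c_{\max}$ and not on $\Omega$, which matches the remark in the text that $\Cweyl$ does not depend on $\Omega$ in the Dirichlet case.
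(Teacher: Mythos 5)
Your proof is essentially identical to the paper's: both bound the counting function by $e\cdot\mathrm{Tr}(e^{t\Delta})$ with $t=\lambda^{-1}$, then use the on-diagonal Gaussian upper bound for the Dirichlet heat kernel of a uniformly elliptic operator (invoked via \cite{DaviesHeatKernels}) to get $\int_\Omega K_t^\Omega(x,x)\,dx\lesssim|\Omega|\,t^{-d/2}$. The paper phrases the domination step by extending $g^{ij}$ to $\mathbb{R}^d$ by $\delta^{ij}$ and comparing to that kernel, but that is the same idea as your ``Dirichlet conditions only decrease the kernel'' remark.
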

\begin{proof}
Let $K^\Omega$ the associated heat kernel.
Extend the coefficient $g^{ij}$ to $\mathbb{R}^d\smallsetminus \Omega$ by letting $g^{ij}=\delta^{ij}$, and let $\tilde K$ be the associated heat kernel on $\R^d$.
Then $K^\Omega$ is pointwise dominated by $\tilde K$.
Then by estimate \eqref{e:heatkernelboundstaut} we have, following \cite{GHL_RiemannianGeometry}:
\begin{eqnarray*}
\#\{j: \globalEV_j\leq \globalEV\}
\leq e \cdot \sum  e^{-\globalEV_j\over \globalEV}
=
e \cdot \int_\Omega K^\Omega_{\frac1\globalEV}(x,x)dx\leq
e \cdot \int_\Omega \tilde K_{\frac1\globalEV}(x,x)dx
\lesssim_{d,c_{\min},c_{\max}}  |\Omega|\globalEV^{d\over 2}\,.
\end{eqnarray*}
\end{proof}

\begin{remark}\label{r:Weyl-constant}
Notice that in the Dirichlet case  $\Cweyl$ is independent of $\Omega$.
In the Neumann case, if one has good enough estimates on the trace of the corresponding heat kernel, the same proof applies. In general these estimates will depend on $\Omega$, and $\Cweyl$ will not be independent of $\Omega$ (see e.g. \cite{Simon:NeumannEssentialSpectrum,Safarov:WeylAsymptoticFormula}).
\end{remark}

\begin{proof}[Proof of Lemma \ref{l:truncated_truncated_kernel}]
In view of Lemma \ref{l:truncated_kernel}, we will show that the terms in the eigenfunction series
corresponding to $j\in\Lambda_L(A)\cap\Lambda_H(A')$ but $j\notin \Lambda_E(\dir,z,R_z,\delta_0,c_0)$ do not contribute
significantly to the lefthand side of \eqref{e:truncatelocalavepartialheatkernel}.
Let $\Lambda_1=\Lambda_L(A)\cap\Lambda_H(A')\cap\left(\Lambda_E(\dir,z,R_z,\delta_0,c_0)\right)^c$. We thus  have
\begin{eqnarray*}
&&
    \left|\sum_{\globalEV_j \in \Lambda_1}
        \globalEF_j(w)\partial_\dir\globalEF_j(z)e^{-\globalEV_j t}\right|\\
&\leq&
    \sum_{\globalEV_j \in \Lambda_1}
        \left|\globalEF_j(w)\right|
        \left|\partial_\dir\globalEF_j(z)e^{-\globalEV_j t}\right|\\
&\leq&
\left(\sum_{\globalEV_j \in \Lambda_1}|\globalEF_j(w)|^2e^{-\globalEV_j t}\right)^\half
\left(\sum_{\globalEV_j \in \Lambda_1}
        |\partial_\dir\globalEF_j(z)|^2e^{-\globalEV_j t}\right)^\half\\
&\leq&
K_{{t}}(w,w)^\half
\left(\sum_{\globalEV_j \in \Lambda_1}
        |\partial_\dir\globalEF_j(z)|^2e^{-\globalEV_j t}\right)^\half\\
&\leq&
K_{{t}}(w,w)^\half
c_0\frac{1}R_z
\left(\sum_{\globalEV_j \in \Lambda_1}
    \ \fint\limits_{\ball(z,\frac12\delta_0R_z)}
        |\globalEF_j|^2e^{-\globalEV_j t}\right)^\half\\
&\leq&
c_0\frac{1}R_z
K_{{t}}(w,w)^\half
    \fint\limits_{z'\in \ball(z,\frac12\delta_0R_z)}
        K_{t}(z',z')^\half \ dz',.
\end{eqnarray*}
Hence by reducing $c_0$ and using Proposition \ref{p:non-smooth-kernel_estimates} together with Lemma \ref{l:truncated_kernel}
we conclude the proof.  We note that the constant $C_9$ comes into play  since we have to estimate the left hand side of equation \eqref{e:truncatelocalavepartialheatkernel}.
\end{proof}

\begin{remark}
The following  proof is the only place where we use the bound on $\Cweyl$ \eqref{e:weyl-bounds-omega}.
\end{remark}

\begin{proof}[Proof of Lemma \ref{l:bigpartials}]

For sufficiently small $b$, Equations \eqref{e:LinftyGradPhij} and \eqref{e:LinftyHolderGraphPhij} from
Proposition \ref{p:PhiLinftyEstimates},
together with the definition of $\Lambda_E(\dir,z,R_z,\delta_0,c_0)$ give
equation \eqref{e:partial_lower_bound_nearby}.
We turn to showing equation \eqref{e:energy-lower-bound}, which is where $\Cweyl$ will appear.

%
Since at this point all the constants are fixed, to ease the notation we let $\Lambda:=\Lambda_L(A)\cap \Lambda_H(A')\cap \Lambda_E(\dir,z,R_z,\delta_0,c_0)$.
Let $w\in\ball(z,\delta_0R_z)\smallsetminus \ball(z,\frac12\delta_0R_z)$ with
$w-z$ in the direciton of $\dir$.
Observe that Proposition \ref{p:non-smooth-kernel_estimates} and Lemma \ref{l:truncated_truncated_kernel} imply that
\begin{equation}
\begin{aligned}
&K_{\frac{t}2}(w,w)\frac{R_z}t\sim_{C_1}^{C_2}
t^{-d\over 2} \frac{R_z}t\sim_{C_1'}^{C_2'}
\left|{\partial_\dir}K_t(w,z)
\right|\\
&\sim_{C_5}^{C_6}
    \left|\suml_{\globalEV_j\in \Lambda}  \globalEF_j(w){\partial_\dir}\globalEF_j(z)e^{-\globalEV_j t}\right|
    \\
&\lesssim_{\{d,c_{\min},c_{\max},||g||_{\alphawedgeone},{\alphawedgeone}\}}
R_z^{-1}
    \suml_{\globalEV_j\in \Lambda}
        \left|\globalEF_j(w)
    e^{-\globalEV_j t}
    \right|
        \left(\fint_{\ball(z,{\frac12\delta_0R_z})}|\globalEF_j|^2\right)^\frac12
    \\
&\leq
R_z^{-1}
    \left(\suml_{\globalEV_j\in \Lambda}  \globalEF_j(w)^2
    e^{-2\globalEV_j t}
    \right)^\frac12
    \left(\suml_{\globalEV_j\in \Lambda}
        \fint_{\ball(z,{\frac12\delta_0R_z})}|\globalEF_j|^2\right)^\frac12\\
&\leq
R_z^{-1}
K_{2t}(w,w)^\half
    \left(\suml_{\globalEV_j\in \Lambda}
        \fint_{\ball(z,{\frac12\delta_0R_z})}|\globalEF_j|^2\right)^\frac12\,.
\end{aligned}
\end{equation}
giving, with constant depending on $C_1,C_1',C_2,C_2',C_6, \{d,c_{\min},c_{\max},||g||_{\alphawedgeone},{\alphawedgeone}\}$,
\begin{equation}
\begin{aligned}
t^{-d\over 2} \left(\frac{R_z^2}t    \right)^2
\lesssim
    \suml_{\globalEV_j\in \Lambda}
        \fint_{\ball_{\frac12\delta_0R_z}(z)}|\globalEF_j|^2\,.
\end{aligned}
\end{equation}
Thus, by the pigeon-hole principle and Weyl's bound \eqref{e:weyl-bounds-omega},
we have $\globalEV_j \in \Lambda_1$ with
\begin{equation}
\begin{aligned}
\frac1\Cweyl \left(\frac{R_z^2}t    \right)^2
\lesssim
        \fint_{\ball_{\frac12\delta_0R_z}(z)}|\globalEF_j|^2\,.
\end{aligned}
\end{equation}
This gives equation \eqref{e:energy-lower-bound}.
\end{proof}

\subsection{Supplemental Lemmata  for  the Manifold case}
\label{s:supplementalmanifoldcase}

We will initially be interested in localizing the manifold Laplacian $\Delta_{\mathcal{M}}$ to a ball $\justB=B_R(z)$, $R\le \injRad(z)$, in a coordinate chart about $z$,
satisfying the assumptions in the Theorem.
We will rescale up so that $R=1$ (and rescale the volume of $\mathcal{M}$ accordingly).
We impose Dirichlet boundary conditions on $\partial \justB$, and denote by  $\tilde\Delta^\justB$ this Laplacian, which has the expression \eqref{e:DeltaM}.
We will compare $\tilde\Delta^\justB$ with the Euclidean Laplacian $\Delta^\justB$ on $\justB$ (also with Dirichlet boundary conditions).
We will then compare $\tilde\Delta^\justB$ with the global Laplacian $\Delta_{\mathcal{M}}$ on the whole manifold (with Dirichlet or Neumann boundary conditions).
The first comparison is most conveniently done through the associated Green functions.
We use the following notation:
\begin{itemize}
\item[(i)]  $\Delta^\justB\localEF_j=\localEV_j\localEF_j$ is the eigen-decomposition of $\Delta^\justB$, with sorted eigenvalues
$0\le\localEV_0\le\localEV_1\le\dots$, $\tilde\Delta^\justB\localPEF_j=\localPEV_j\localPEF_j$ is the
analogous decomposition of $\tilde\Delta^\justB$, and $\Delta_\mathcal{M}\globalEF_j=\globalEV_j\globalEF_j$ the one for $\Delta_\mathcal{M}$.
The eigenfunctions are assumed to be normalized in the corresponding natural $L^2$ spaces.
\item[(ii)] $G^\justB$ is the Green function on $\justB$, associated with $\Delta^\justB$, with Dirichlet boundary conditions, and $K^\justB$ the corresponding heat kernel;
\item[(iii)] $\tilde G^\justB$ is the Green function on $\justB$, associated with $\tilde\Delta^\justB$, with Dirichlet boundary conditions, and $\tilde K^\justB$ the corresponding heat kernel;
\item[(iv)] the quadratic form associated with $g$, restricted to $\justB$, will be abbreviated as
\begin{equation}
\localgquadratic(u,v)=\int_{\justB}\sum_{i,j=1}^d g^{ij}\partial_i u \partial_j v\,.
\label{e:gquadratic}
\end{equation}
for suitable $u,v$.
\end{itemize}

We will use estimates from \cite{GruterWidman:GreenFunction}, where they are stated only for the case of $d\geq 3$.
Our Theorems are true also for the case $d=2$ (and trivially , $d=1$).
This can be seen indirectly by considering $\tilde{\mathcal{M}}:=\mathcal{M}\times \mathbb{T}$ and noting that the eigenfunctions of $\tilde{\mathcal{M}}$ and the heat kernel of $\tilde{\mathcal{M}}$ both factor.

We let
$$ (L^*)^p(\justB)=\{f:\justB\rightarrow\mathbb{R} \mathrm{\ measurable\ }: ||f||_{(L^*)^p}<+\infty\} $$
where
$$ ||f||_{(L^*)^p} = \sup_{t>0} t|\{x\in\justB:|f(x)|>t\}|^\frac 1p\,. $$

We recall the following Theorem from \cite{GruterWidman:GreenFunction}
\begin{theorem}\label{GW-thm}
Suppose $d\ge3$, and $g\in L^\infty$ and uniformly elliptic with $c_{\min}$ and $c_{\max}$ as in \eqref{e:uniformlyelliptic}.
There exists a unique nonnegative function $\tilde G^\justB:\justB\rightarrow\mathbb{R}\cup\{\infty\}$,
called the Green function, such that for each $y\in\justB$ and any $r>0$ such that $B_r(y)\subseteq \justB$,
\begin{equation*}
\tilde G^\justB(\cdot,y)\in W_c^{1,2}(\justB\setminus B_r(y))\cap W_c^{1,1}(\justB)\,,
\end{equation*}
$G^\justB|_{\partial\justB}=0$, and for all $\phi\in\mathcal{C}_c^\infty(\justB)$
\begin{equation*}
\localgquadratic(\tilde G^\justB(\cdot,y),\phi)=\phi(y)\,.
\end{equation*}
Moreover, for each $y\in\justB$, 
\begin{itemize}
\item[(i)]
$\tilde G^\justB(\cdot,y)\in (L^*)^{\frac d{d-2}},\,\,\,\mathrm{with\ }\,\,\,||\tilde G^\justB||_{(L^*)^{\frac d{d-2}}}\lesssim_{d,c_{\min}} 1$
\item[(ii)]
$\nabla \tilde G^\justB(\cdot,y)\in (L^*)^{\frac d{d-1}},\,\,\,\mathrm{with\ }\,\,\,||\nabla \tilde G^\justB||_{(L^*)^{\frac d{d-1}}}\lesssim_{d,c_{\max},c_{\min}} 1$
\item[(iii)] $\tilde G^\justB(x,y)\gtrsim_{d,c_{\max},c_{\min}} |x-y|^{2-d}\,\,\,\,\mathrm{for\ }\,\,\,|x-y|\le\frac12d(y,\partial\justB)$
\item[(iv)]
\begin{equation}
\tilde G^\justB(x,y)\lesssim_{d,c_{\max},c_{\min}} |x-y|^{2-d}
\label{G-estimate}
\end{equation}
\end{itemize}
If $g\in \mathcal{C}^{\alpha}$ we also have (see page 333 in \cite{GruterWidman:GreenFunction})
\begin{itemize}
\item[(v)]
\begin{equation}
\nabla_y \tilde G^\justB(x,y)\lesssim_{d,c_{\max},c_{\min},{\alphawedgeone},||g||_{\alphawedgeone}} |x-y|^{1-d}
\label{e:gradGestimate}
\end{equation}
\item[(vi)]
\begin{equation}
|\nabla_x \tilde G^\justB(x_1,y)-\nabla_x \tilde G^\justB(x_2,y)|
\lesssim_{d,c_{\max},c_{\min},R^{\alphawedgeone},{\alphawedgeone},||g||_{\alphawedgeone}} \frac{|x_1-x_2|^{\alphawedgeone}}{|x_1-y|^{d+{\alphawedgeone}-1}+|x_2-y|^{d+{\alphawedgeone}-1}}\,.
\label{e:gradGholderestimate}
\end{equation}
\end{itemize}
\label{t:WG}
\end{theorem}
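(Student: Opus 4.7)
The plan is to follow a classical De Giorgi--Nash--Moser style construction and reduce everything to standard interior elliptic estimates for the operator $\tilde\Delta^{\justB}$. First I would construct $\tilde G^{\justB}(\cdot, y)$ for fixed $y \in \justB$ by approximation: for $\rho > 0$ small enough that $B_\rho(y) \Subset \justB$, let $G_\rho(\cdot, y) \in W^{1,2}_0(\justB)$ be the unique weak solution produced by Lax--Milgram of
\begin{equation*}
\localgquadratic\bigl(G_\rho(\cdot, y),\phi\bigr) = \fint_{B_\rho(y)} \phi\,,\qquad \phi \in \mathcal{C}^\infty_c(\justB).
\end{equation*}
Uniform ellipticity with constants $c_{\min}, c_{\max}$ gives a uniform bound on $\|G_\rho\|_{W^{1,1}(\justB)}$ and on $\|G_\rho\|_{W^{1,2}(\justB \setminus B_r(y))}$ for any fixed $r > 0$, allowing passage to a weak limit $\tilde G^{\justB}(\cdot, y)$ as $\rho \to 0^+$. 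Uniqueness, nonnegativity (via truncation $\min(G,0)$ as test function), and the distributional identity $\localgquadratic(\tilde G^\justB(\cdot,y),\phi) = \phi(y)$ all follow from standard arguments.

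Next, for the weak-type integrability bounds (i) and (ii), I would adapt the Stampacchia truncation method: for each $t > 0$ test the equation against $\phi_t = (\tilde G^\justB - t)_+$ (suitably cut off), and use Sobolev's embedding $W^{1,2} \hookrightarrow L^{\frac{2d}{d-2}}$ to get
\begin{equation*}
    t\,\bigl|\{\tilde G^\justB > t\}\bigr|^{\frac{d-2}{d}} \lesssim_{d,c_{\min}} 1\,,
\end{equation*}
which is exactly (i); the analogous manipulation with $\|\nabla \tilde G^\justB\|_{L^2(\{t < \tilde G < 2t\})}$ combined with the layer-cake formula gives (ii). The pointwise upper bound (iv) then comes from Moser's $L^\infty$--$L^p$ estimate applied to the sub-solution $\tilde G^\justB$ of $\tilde\Delta^{\justB} u = 0$ on the annulus $B_{2r}(y) \setminus B_{r/2}(y)$ with $r = |x-y|/2$; after rescaling to unit size this yields $\sup_{B_r(x)} \tilde G \lesssim \fint_{B_{2r}(x)} \tilde G$, and the weak-$L^{d/(d-2)}$ bound converts the average into $C r^{2-d}$. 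For the matching lower bound (iii), I would apply Moser's Harnack inequality on the same annulus to the nonnegative supersolution, combined with the identity that, by testing against a standard cutoff $\phi \equiv 1$ near $y$, the spherical average of $\tilde G$ at radius $r$ is at least $c\, r^{2-d}$.

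For the gradient estimates (v) and (vi) in the Hölder regime $g \in \mathcal{C}^{\alpha}$, I would appeal to interior Schauder theory. Away from the singularity the equation $\tilde\Delta^{\justB} \tilde G^{\justB}(\cdot,y) = 0$ on $B_r(x) \subset \justB \setminus \{y\}$, with $r = \tfrac12 |x-y|$, gives by interior $\mathcal{C}^{1,\alpha}$ Schauder estimates
\begin{equation*}
    r\,\|\nabla \tilde G^\justB\|_{L^\infty(B_{r/2}(x))} + r^{1+\alpha}\,[\nabla \tilde G^\justB]_{\mathcal{C}^{\alphawedgeone}(B_{r/2}(x))}
    \lesssim \sup_{B_r(x)} \tilde G^\justB \lesssim r^{2-d}\,,
\end{equation*}
where the last inequality uses (iv). Rescaling the Schauder constants using Remark~\ref{r:hoelder-norm} produces the explicit dependence on $R^{\alphawedgeone} \|g\|_{\alphawedgeone}$, and dividing through gives (v) and (vi). I expect the main technical obstacle to be the weak-$L^p$ bounds (i)--(ii), which require care in tracking how the Stampacchia/Moser iteration interacts with the non-smoothness of $g$; the Schauder step for (v)--(vi), while straightforward in principle, also needs the scaling bookkeeping to match the stated dependence on $R^{\alphawedgeone}\|g\|_{\alphawedgeone}$.
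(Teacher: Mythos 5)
The paper itself does not prove this theorem: it is quoted directly from Gr\"uter--Widman \cite{GruterWidman:GreenFunction} (note the explicit page reference for items (v)--(vi)), so there is no in-paper argument to compare against, and the citation carries the entire burden. Your sketch reconstructs the standard Gr\"uter--Widman/Littman--Stampacchia--Weinberger argument, which is precisely what the citation invokes, so the two approaches coincide. A few points are worth tightening. For (i)--(ii), the single truncation $\phi_t=(\tilde G^\justB-t)_+$ is not admissible as a test function --- the right side $\phi_t(y)=+\infty$ at the pole, and even with the regularized $G_\rho$ it only gives a useless bound by $\sup(G_\rho-t)_+$. The correct choice is the two-sided truncation $\phi=(G_\rho-t)_+\wedge s$, which via the equation and ellipticity yields
\begin{equation*}
c_{\min}\int_{\{t<G_\rho<t+s\}}|\nabla G_\rho|^2\le s\,;
\end{equation*}
Sobolev applied to $\phi$ with $s=t$ then produces (i), while splitting $|\{|\nabla \tilde G^\justB|>t\}|\le |\{\tilde G^\justB>s\}|+t^{-2}\int_{\{\tilde G^\justB\le s\}}|\nabla \tilde G^\justB|^2\lesssim s^{-d/(d-2)}+s/t^2$ and optimizing over $s$ gives (ii) --- the ``layer-cake formula'' controls an $L^1$ norm rather than a weak quasi-norm, so this Chebyshev step is what you actually need. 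For (iii), testing against a cutoff does not directly bound a spherical average of $\tilde G^\justB$: it gives $1=\localgquadratic(\tilde G^\justB,\phi)\le c_{\max}\|\nabla\tilde G^\justB\|_{L^2(\mathrm{supp}\,\nabla\phi)}\|\nabla\phi\|_{L^2}$, and one then needs a Caccioppoli inequality together with Harnack on the annulus to convert this into the pointwise lower bound. Finally, (v) controls $\nabla_y$, the gradient in the \emph{second} variable, while interior Schauder applied to the $\tilde\Delta^\justB$-harmonic function $\tilde G^\justB(\cdot,y)$ controls $\nabla_x$; to obtain (v) you must additionally invoke the symmetry $\tilde G^\justB(x,y)=\tilde G^\justB(y,x)$, valid because the operator is self-adjoint and in divergence form. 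Also note that $\tilde G^\justB(\cdot,y)$ is a \emph{super}solution across the pole (and a solution away from it), not a subsolution, though this does not affect your use of Moser's local boundedness on the annulus. With these clarifications your outline is a correct account of the cited result.
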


Simple consequences of the bounds above are the following inequalities, which we record for future use:
\begin{align}
\int_{c_1 \Rrescbp{}\le||z-y||\le c_2\Rrescbp{}}|\tilde G^{\Rrescbp{}}(z,y)|^p\,dy&\lesssim_{c_1,c_2,d,c_{\min},c_{\max},p} \Rrescbp{(2-d)+\frac dp} \label{e:annulusintG}\\
\int_{c_1 \Rrescbp{}\le||z-y||\le c_2\Rrescbp{}}|\nabla_y \tilde G^{\Rrescbp{}}(z,y)|^p\,dy&\lesssim_{c_1,c_2,d,c_{\min},c_{\max},{\alphawedgeone},||g||_{\alphawedgeone},p} \Rrescbp{(1-d)+\frac dp}\,, \label{e:annulusintgradG}\\
\int_{\justB} |\nabla_y \tilde G^{\Rrescbp{}}(z,y)|\,dy&\lesssim_{d,c_{\min},c_{\max},{\alphawedgeone},||g||_{\alphawedgeone}} \Rrescbp{}\,, \label{e:annulusintgradG2}
\end{align}
which are an immediate consequence of \eqref{e:gradGestimate}, and are valid for $c_1,c_2>0$ and $0<R_z'<R_z$.

We recall that if we only assume uniform ellipticity, without any assumption on the modulus of continuity of $g$, then we have no pointwise estimates on $\grad G$.

%
%
%
%
\subsubsection{Perturbation of eigenfunctions}
We start by comparing eigenfunctions of the Euclidean $\Delta^\justB$ with eigenfunctions of $\tilde\Delta^\justB$.
We remind the reader that we have rescaled up to $R=1$.

\begin{lemma}
Let $J>0$ and $\eta>0$ be given.
There is an $\epsilon_0=\epsilon_0(J)$ so that if $\epsilon<\epsilon_0$ and
$Id: (\justB,\delta^{ij})\to (\justB, g^{ij})$
is $1+\epsilon$ bi-Lipschitz, then for $j<J$,
\begin{gather*}
\|\localEF_j-\localPEF_j\|_{L^2(\justB)}\leq \eta\,\,,\,\,
|\localEV_j-\localPEV_j|<\eta\localEV_j
\end{gather*}
\label{l:frenchperturbation}
\label{lemma-1}
\end{lemma}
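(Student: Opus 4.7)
The plan is to combine the standard min-max characterization of eigenvalues with a compactness argument for eigenfunctions, exploiting the closeness of the quadratic forms defining the two operators on $\justB$.

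First I would translate the $(1+\epsilon)$ bi-Lipschitz assumption on $\mathrm{Id}:(\justB,\delta^{ij})\to(\justB,g^{ij})$ into uniform closeness of the metric coefficients, namely $|g^{ij}(x)-\delta^{ij}|\lesssim_d\epsilon$ and $|\sqrt{\det g(x)}-1|\lesssim_d\epsilon$ for every $x\in\justB$. Combined with uniform ellipticity, this produces a constant $C=C(d,c_{\min},c_{\max})$ such that for every $u\in W^{1,2}_0(\justB)$,
$$(1-C\epsilon)\int_\justB|\nabla u|^2\,dx\leq \localgquadratic(u,u)\leq (1+C\epsilon)\int_\justB|\nabla u|^2\,dx,$$
$$(1-C\epsilon)\int_\justB u^2\,dx\leq \int_\justB u^2\sqrt{\det g}\,dx\leq (1+C\epsilon)\int_\justB u^2\,dx.$$
Next I would apply the Courant-Fischer-Weyl min-max formula, inserting $\mathrm{span}\{\localEF_0,\dots,\localEF_j\}$ as a test subspace in the variational formula for $\localPEV_j$, and symmetrically using the first $j+1$ eigenfunctions of $\tilde\Delta^\justB$ as a test subspace for $\localEV_j$. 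The two displays above yield immediately
$$|\localPEV_j-\localEV_j|\lesssim \epsilon\,\max(\localEV_j,\localPEV_j),$$
so choosing $\epsilon_0$ small enough (no dependence on $J$ is needed for this part) delivers the eigenvalue estimate for every $j$.

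For the $L^2$ estimate on eigenfunctions I would argue by contradiction and compactness. Suppose there are $\epsilon_n\to 0$ and metrics $g_n$ and some fixed $j<J$ such that $\|\localEF_j-\localPEF_{j,n}\|_{L^2(\justB)}\geq\eta$ even after the most favorable choice of sign and of orthonormal basis inside each eigenspace of $\tilde\Delta^\justB_{g_n}$. Because the $\localPEF_{j,n}$ are normalized in $L^2(\sqrt{\det g_n}\,dx)$ and have eigenvalue $\localPEV_{j,n}\to\localEV_j$, they are uniformly bounded in $W^{1,2}_0(\justB)$. Rellich-Kondrachov extracts a subsequence converging in $L^2(\justB)$ to some $\phi$ with $\|\phi\|_{L^2}=1$. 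Passing to the limit in the weak formulation of the eigenvalue equation, using $g_n^{ij}\to\delta^{ij}$ uniformly and $\localPEV_{j,n}\to\localEV_j$, one sees that $\phi$ satisfies $\Delta^\justB\phi=\localEV_j\phi$ with Dirichlet data, i.e.\ $\phi$ lies in the $\localEV_j$-eigenspace of $\Delta^\justB$.

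The main obstacle is that $\localEV_j$ may be degenerate, so that $\phi$ need not equal $\localEF_j$. To close the loop I would run the compactness argument simultaneously for all indices $i\leq J$, extracting limits $\phi_0,\dots,\phi_J$. Orthonormality of $\{\localPEF_{i,n}\}_i$ in $L^2(g_n)$ together with uniform closeness of the two $L^2$ inner products shows that $\{\phi_i\}_{i\leq J}$ is orthonormal in $L^2(\justB)$; since each $\phi_i$ is a Euclidean eigenfunction of eigenvalue $\localEV_i$, the collection $\{\phi_i\}$ is an orthonormal basis of the span of the first $J+1$ Euclidean eigenspaces. Within each eigenspace of $\Delta^\justB$ this produces an orthonormal basis, and relabeling the $\localPEF_{i,n}$ within each degenerate cluster so as to minimize $\sum_i\|\localEF_i-\localPEF_{i,n}\|_{L^2}$, one forces $\phi_i=\localEF_i$ for every $i$, contradicting the standing hypothesis. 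This relabeling within clusters, and the fact that the gap between eigenvalues of $\Delta^\justB$ shrinks with $j$, is precisely why $\epsilon_0$ must be allowed to depend on $J$ for the eigenfunction conclusion, even though it need not for the eigenvalue conclusion.
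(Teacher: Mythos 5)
The paper disposes of this lemma in a single line by citing Lemma~20 of \cite{GHL_RiemannianGeometry}, so there is no internal argument to compare against; your proposal supplies what the paper deliberately omits. The structure you use --- Courant--Fischer min-max to control the eigenvalues, then weak compactness in $W^{1,2}_0(\justB)$ plus Rellich--Kondrachov to pass to the limit in the weak eigenvalue equation --- is the standard route and matches in spirit what the cited reference establishes. Your observation that $\epsilon_0$ can be taken independent of $J$ for the eigenvalue bound (since it is multiplicative) but must depend on $J$ for the eigenfunction bound (because eigenvalue gaps shrink) is accurate and worth keeping.

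The one place where the argument as written does not quite close is the degeneracy step. You propose to ``relabel the $\localPEF_{i,n}$ within each degenerate cluster'' to force $\phi_i = \localEF_i$. But a generic perturbation $g_n$ of the Euclidean metric will \emph{split} a degenerate Euclidean eigenvalue $\localEV_i = \localEV_{i+1} = \dots$ into distinct eigenvalues $\localPEV_{i,n} < \localPEV_{i+1,n} < \dots$, so each $\localPEF_{i,n}$ is then pinned down (up to sign) as the eigenfunction of a simple eigenvalue of $\tilde\Delta^\justB_{g_n}$, and there is no rotational freedom left on the perturbed side. The freedom you actually have is on the \emph{Euclidean} side: the $\localEV_j$-eigenspace of $\Delta^\justB$ is the degenerate one, and you are free to take $\localEF_i := \phi_i$ once the limits are extracted. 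With this reading the contradiction closes, and it also makes clear that the lemma is really a statement about closeness of spectral projections (equivalently, existence of a compatible pair of orthonormal eigenbases) rather than about two fixed, independently chosen bases. If you want to avoid all basis ambiguity, the cleanest fix is to phrase the $L^2$ convergence directly in terms of the Riesz projection onto the eigenvalues lying in a small neighborhood of $\localEV_j$: the quadratic-form closeness together with the eigenvalue separation for fixed $J$ gives operator-norm convergence of these projections, from which the statement for suitably chosen eigenfunctions follows.
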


\begin{proof}
This follows from Lemma 20 in \cite{GHL_RiemannianGeometry}.
\end{proof}

\begin{lemma}
There is an integer $\efk>0$ such that
the following bounds hold:
\begin{equation}
\begin{aligned}
\|\localPEF_j\|_{L^\infty(\justB)}&\lesssim_{d,c_{\min},c_{\max}} (\localPEV_j \Rresc{2})^{\efk}\Rresc{-\frac{d}2}
\label{e:Linftyeigen}
\end{aligned}
\end{equation}
and if $g^{ij}\in \mathcal{C}^{\alpha}$ we also have
\begin{align}
\|\grad_y\localPEF_j\|_{L^\infty(\justB)}&\lesssim_{d,c_{\min},c_{\max},{\alphawedgeone},||g||_{\alphawedgeone}} \localPEV_j \Rresc{}(\localPEV_j \Rresc{2})^{\efk}\Rresc{-\frac{d}2} \label{e:Linftygradeigen}\,,
\end{align}
with $\efk=\frac{d-1}2$ for $d$ odd and $\efk=\frac{d}2$ for $d$ even.
\label{lemma-ef-bound}
\end{lemma}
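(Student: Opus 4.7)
The plan is to bootstrap from the $L^2$-normalization of $\localPEF_j$ to the claimed $L^\infty$ and gradient bounds via the Green function representation, iterated enough times to land in $L^2$. By the product trick $\mathcal{M}\rightsquigarrow \mathcal{M}\times \mathbb{T}$ mentioned right before Theorem \ref{t:WG}, I may assume $d\ge 3$, where Theorem \ref{t:WG} directly supplies the needed pointwise estimates on $\tilde G^\justB$.

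Step 1 ($L^\infty$ bound). Since $\tilde\Delta^\justB \localPEF_j = \localPEV_j \localPEF_j$ with Dirichlet boundary conditions on $\justB$, inverting the operator yields
$$ \localPEF_j(x)=\localPEV_j\int_\justB \tilde G^\justB(x,y)\,\localPEF_j(y)\,dy .$$
Iterating $\efk$ times gives $\localPEF_j(x)=\localPEV_j^{\efk}\int_\justB \tilde G^{\justB,\efk}(x,y)\,\localPEF_j(y)\,dy$, where $\tilde G^{\justB,\efk}$ is the $\efk$-fold composition kernel. Using the bound $\tilde G^\justB(x,y)\lesssim |x-y|^{2-d}$ from Theorem \ref{t:WG}(iv) and the Riesz composition formula on the bounded set $\justB$, the iterated kernel satisfies
$$ \tilde G^{\justB,\efk}(x,y) \lesssim |x-y|^{2\efk-d}\quad(2\efk<d), \qquad \tilde G^{\justB,\efk}(x,y)\lesssim 1+|\log|x-y||\quad(2\efk=d).$$
For the choice $\efk=\lfloor d/2\rfloor$ — which is $(d-1)/2$ in odd dimension and $d/2$ in even dimension — a direct radial integration (absorbing a log at the even borderline) gives
$$ \big\|\tilde G^{\justB,\efk}(x,\cdot)\big\|_{L^2(\justB)}\lesssim R^{2\efk-d/2}.$$
Cauchy–Schwarz together with $\|\localPEF_j\|_{L^2(\justB)}=1$ then yields
$$ |\localPEF_j(x)|\le \localPEV_j^{\efk}\big\|\tilde G^{\justB,\efk}(x,\cdot)\big\|_{L^2(\justB)}\lesssim \localPEV_j^{\efk}R^{2\efk-d/2}=(\localPEV_j R^2)^{\efk}R^{-d/2},$$
which is \eqref{e:Linftyeigen}.

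Step 2 (gradient bound). This is where the hypothesis $g\in\mathcal{C}^{\alphawedgeone}$ is used, since it is required by Theorem \ref{t:WG}(v). Differentiating the \emph{one-step} representation in $x$,
$$ \grad\localPEF_j(x)=\localPEV_j\int_\justB \grad_x \tilde G^\justB(x,y)\,\localPEF_j(y)\,dy ,$$
and estimating by Hölder in the form $L^1\cdot L^\infty$:
$$ |\grad\localPEF_j(x)|\le \localPEV_j\,\big\|\grad_x \tilde G^\justB(x,\cdot)\big\|_{L^1(\justB)}\,\|\localPEF_j\|_{L^\infty(\justB)}.$$
The $L^1$ factor is $\lesssim R$ by \eqref{e:annulusintgradG2} (a consequence of Theorem \ref{t:WG}(v)), and the $L^\infty$ factor is handled by Step 1, giving
$$ |\grad\localPEF_j(x)|\lesssim \localPEV_j R\cdot(\localPEV_j R^2)^{\efk}R^{-d/2},$$
which is \eqref{e:Linftygradeigen}.

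The main obstacle is the iterated kernel estimate at the beginning of Step 1: Theorem \ref{t:WG} only supplies a pointwise bound on $\tilde G^\justB$, and one must verify that the Euclidean Riesz composition formula still governs $\tilde G^{\justB,\efk}$ on the bounded domain $\justB$, including at the borderline even dimensions where a logarithmic factor appears. The choice $\efk=\lfloor d/2\rfloor$ (rather than the minimal $\efk>d/4$ needed purely for $L^2$-integrability) is convenient precisely because it cleanly matches the exponents in \eqref{e:Linftyeigen}–\eqref{e:Linftygradeigen}, and lets the log be absorbed.
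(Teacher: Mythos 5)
Your proof is correct and follows the same overall strategy as the paper's: Green-function iteration of the eigenfunction equation, an iterated-kernel estimate, Cauchy--Schwarz against the $L^2$-normalized eigenfunction to get $L^\infty$, and a one-step $L^1$--$L^\infty$ H\"older estimate (using Theorem~\ref{t:WG}(v) via \eqref{e:annulusintgradG2}) for the gradient. The only genuine difference is in how the iterated kernel $\tilde G^{\justB,\efk}$ is controlled. The paper applies Young's convolution inequality iteratively, using $\tilde G^\justB\in L^{\frac{d-1}{d-2}}(\justB)$ (a slightly sub-critical exponent deduced from the weak-$L^{\frac{d}{d-2}}$ bound of Theorem~\ref{t:WG}(i)); because this exponent is not sharp, the Young chain lands cleanly in $L^{q_\efk}\subset L^2$ without ever touching the borderline. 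You instead use the sharp pointwise bound $\tilde G^\justB\lesssim|x-y|^{2-d}$ from Theorem~\ref{t:WG}(iv) and the Riesz composition rule, which gives $|x-y|^{2\efk-d}$ for $2\efk<d$ but, at even $d$ with $\efk=d/2$, produces the logarithmic borderline kernel. You observe correctly that the log is harmless here, but it is worth spelling out why: the proof is carried out after the normalization $R=1$ (as the paper states at the start of Section~\ref{s:supplementalmanifoldcase}), so $\|1+|\log|\cdot||\|_{L^2(\justB)}=O(1)$ and the claimed $L^2$ bound holds with constant independent of $\localPEV_j$. If one wanted the estimate scale-homogeneous in $R$ for $R\ll1$, the even-$d$ case would pick up an extra $|\log R|$ factor, which the paper's Young-inequality route silently avoids. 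Both routes buy the same result under the $R=1$ normalization; the paper's is slightly more robust, yours is slightly more transparent about what the kernel actually looks like.
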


\begin{proof}

By the definition of $\tilde G$, and by recalling that $\tilde G|_{\partial \justB}=0$,
\begin{gather*}
\localPEF_j={\tilde G^\justB}\tilde\Delta^\justB \localPEF_j=\localPEV_j{\tilde G^\justB}\localPEF_j
=...=
\localPEV_j^k\underbrace{{\tilde G^\justB}\dots{\tilde G^\justB}}_{k}\localPEF_j
\end{gather*}
Let $p_i,q_i$ be such that
\begin{equation*}
\sum\limits_{1\leq i\leq k} p_i^{-1} - k +1 = q_k^{-1}\,.
\end{equation*}
Then, using Young's inequality we have
\begin{equation*}
\begin{aligned}
\|\underbrace{{\tilde G^\justB}*\dots*{\tilde G^\justB}}_{k}\|_{L^{q_k}}
&\lesssim_{q_{k-1},q_k,p_k}
\|\underbrace{{\tilde G^\justB}*\dots*{\tilde G^\justB}}_{k-1}\|_{L^{q_{k-1}}}\|{\tilde G^\justB}\|_{L^{p_k}}\\
&\lesssim_{q_{k-2},q_{k-1},p_{k-1}}...
\lesssim _{q_1,q_2,p_2}\|{\tilde G^\justB}\|_{L^{p_1}}...\|{\tilde G^\justB}\|_{L^{p_k}}\,.
\end{aligned}
\end{equation*}
We have
${\tilde G^\justB}\in L^{\frac{d-1}{d-2}}$ by Theorem \ref{t:WG};
we take $p_i=\frac{d-1}{d-2}$ and take $k=d-1$ and get
$$q_{d-1}^{-1}=(d-1)\frac{d-2}{d-1} - (d-1) + 1=0\,,$$
for odd $d$
$$q_\frac{d-1}2^{-1}=\frac{d-1}2 \frac{d-2}{d-1} - \frac{d-1}2 + 1=\frac12\,,$$
and for even  $d$
$$q_\frac{d}2^{-1}=\frac{d}2 \frac{d-2}{d-1} - \frac{d}2 + 1=
\frac{d}2 (\frac{d-2}{d-1} -1) + 1\leq  \frac12\,.$$
Now, for odd $d$,
\begin{gather*}
\|\localPEF_j\|_{\infty}\lesssim
    \localPEV_j^{\frac{d-1}2}\|\underbrace{{\tilde G^\justB}*...*{\tilde G^\justB}}_{\frac{d-1}2}\|_{L^2}
    \|\localPEF_j\|_{L^2}
    \leq
    \localPEV_j^{\frac{d-1}2}\|{\tilde G^\justB}\|_{L^\frac{d-1}{d-2}}\lesssim \localPEV_j^{\frac{d-1}2}
\end{gather*}
which gives the first desired bound.
If $d$ is even do the same with $\frac{d}2$ replacing $\frac{d-1}2$.

For the gradient estimate, we have
\begin{gather*}
|\grad_y\localPEF_j|=|\grad_y \localgquadratic({\tilde G^\justB},\localPEF_j)|=
|\grad_y\int_{\justB} {\tilde G^\justB} \tilde\Delta^\justB \localPEF_j|=
|\localPEV_j \grad_y\int {\tilde G^\justB}  \localPEF_j|
\leq
\localPEV_j\|\grad_y{\tilde G^\justB}\|_{L^1}\|\localPEF_j\|_{\infty}\,,
\end{gather*}
where we used the defining property of $\tilde G^\justB$ in Theorem \ref{t:WG} and Green's Theorem.
We estimate the last term by \eqref{e:Linftyeigen} and equation \eqref{e:gradGestimate} to get the desired result.
\end{proof}

We can now convert the $L^2$-estimates in Lemma \ref{lemma-1} into $L^\infty$-estimates.
We will need the following

\begin{lemma}
Assume that $|g^{ij}(x)-\delta^{ij}|<\epsilon$ for $x\in\justB$.
Then for $\psi\in C^\infty_c(\justB)$ we have
\begin{equation}
\bigg\|      \int_{\justB} \langle
    \nabla\left(\tilde G^\justB(z,w)-G^\justB(z,w)\right),\nabla\psi(z)\rangle dz
\bigg\|_{L^\infty(\justB)}
\leq
    \epsilon\|\grad_y{\tilde G^\justB}\|_{L^1} \|\grad\psi\|_{L^\infty(\justB)}
\label{e:Gpertubationestimate-1}
\end{equation}
and if $g^{ij}\in \mathcal{C}^{\alpha}$ we also have
\begin{equation}
\bigg\| \int_{\justB} \langle
    \nabla \left( \tilde G^\justB(z,w)-G^\justB(z,w)\right),\nabla\localPEF_l(z)\rangle dz
\bigg\|_{L^\infty(\justB)}
\lesssim_{d,c_{\min},c_{\max},{\alphawedgeone},||g||_{\alphawedgeone}}
    \epsilon
        \localPEV_l \Rresc{}(\localPEV_l \Rresc{2})^{\efk}\Rresc{-\frac{d}2}
\label{e:Gpertubationestimate-2}
\end{equation}
as well as
\begin{equation}
\bigg\|      \int_{\justB} \langle
    \nabla \left( \tilde G^\justB(z,w)-G^\justB(z,w)\right),\nabla\localEF_l(z)\rangle dz
\bigg\|_{L^\infty(\justB)}
\lesssim_{d,c_{\min},c_{\max},{\alphawedgeone},||g||_{\alphawedgeone}}
    \epsilon
        \localEV_l \Rresc{}(\localEV_l \Rresc{2})^{\efk}\Rresc{-\frac{d}2}
\label{e:Gpertubationestimate-3}
\end{equation}
with $\efk$ as in Lemma \ref{lemma-ef-bound}.
\end{lemma}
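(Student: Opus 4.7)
The plan is to derive all three bounds from a single subtraction identity. By the defining weak equation of $\tilde G^\justB$ from Theorem \ref{t:WG} and the analogous Euclidean identity for $G^\justB$, for any $\phi\in C^\infty_c(\justB)$ I have
\begin{equation*}
\int_\justB \sum_{i,j} g^{ij}(z)\,\partial_i \tilde G^\justB(z,w)\,\partial_j\phi(z)\,dz \;=\; \phi(w) \;=\; \int_\justB \langle \nabla_z G^\justB(z,w),\nabla\phi(z)\rangle\,dz.
\end{equation*}
Subtracting and rearranging gives the key identity
\begin{equation*}
\int_\justB \langle \nabla_z(\tilde G^\justB-G^\justB)(z,w),\nabla\phi(z)\rangle\,dz \;=\; \int_\justB \sum_{i,j}(\delta^{ij}-g^{ij}(z))\,\partial_i\tilde G^\justB(z,w)\,\partial_j\phi(z)\,dz.
\end{equation*}
Since $|g^{ij}-\delta^{ij}|<\epsilon$ on $\justB$ (read as an operator-norm bound on the perturbation matrix, absorbing dimensional factors), the right-hand integrand is pointwise dominated by $\epsilon\,|\nabla\tilde G^\justB|\,|\nabla\phi|$. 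Pulling $\|\nabla\phi\|_{L^\infty}$ outside and invoking \eqref{e:annulusintgradG2} together with the symmetry $\tilde G^\justB(z,w)=\tilde G^\justB(w,z)$, which converts the $L^1$-in-$y$ bound at fixed pole into an $L^1$-in-$z$ bound uniform in $w$, yields \eqref{e:Gpertubationestimate-1}.

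For \eqref{e:Gpertubationestimate-2} I substitute $\phi=\localPEF_l$ in the identity. Although $\localPEF_l$ is not compactly supported, the Dirichlet condition $\localPEF_l|_{\partial\justB}=0$ permits its approximation in $W^{1,\infty}_0(\justB)$ by $C^\infty_c$ functions, and both sides of the identity are continuous under such approximation since $\tilde G^\justB(\cdot,w)\in W^{1,1}_c(\justB)$. Bounding the right-hand side by $\epsilon\|\nabla\tilde G^\justB(\cdot,w)\|_{L^1}\|\nabla\localPEF_l\|_{L^\infty}$, I then insert the gradient bound \eqref{e:Linftygradeigen} from Lemma \ref{lemma-ef-bound} and the $L^1$ estimate \eqref{e:annulusintgradG2}. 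Taking the supremum over $w$ gives \eqref{e:Gpertubationestimate-2}. The estimate \eqref{e:Gpertubationestimate-3} is proved identically with $\phi=\localEF_l$: the required sup-norm control on $\nabla\localEF_l$ is just the $g^{ij}=\delta^{ij}$ specialization of Lemma \ref{lemma-ef-bound}, whose proof only invokes the Green-function estimates of Theorem \ref{t:WG}, which hold a fortiori for the smooth Euclidean Green function $G^\justB$.

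The principal technical point, and the only step that requires care, is the passage from test functions $\phi\in C^\infty_c(\justB)$ to the Dirichlet eigenfunctions $\localPEF_l$ and $\localEF_l$: one needs that truncation and mollification of an eigenfunction produce an approximating sequence whose gradients are uniformly bounded in $L^\infty$ (so the right-hand side of the subtraction identity passes to the limit) and that the corresponding boundary contributions vanish (guaranteed by the Dirichlet condition together with $\tilde G^\justB|_{\partial\justB}=G^\justB|_{\partial\justB}=0$). Once this density step is in hand, the remainder of the proof is pure bookkeeping: componentwise perturbation bound, Hölder's inequality, and substitution of the previously established $L^1$ estimates on $\nabla\tilde G^\justB$ and $L^\infty$ estimates on eigenfunction gradients.
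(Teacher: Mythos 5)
Your proof is correct and follows essentially the same approach as the paper: both derive the key cancellation by subtracting the defining weak identities of $\tilde G^\justB$ and $G^\justB$, leaving only the perturbation term $\int (g^{ij}-\delta^{ij})\partial_i\tilde G^\justB\,\partial_j\phi$, which is then bounded via Hölder using the $L^1$ estimate on $\nabla\tilde G^\justB$ and the $L^\infty$ gradient bounds of Lemma \ref{lemma-ef-bound}. You are somewhat more explicit than the paper about the density step needed to pass from test functions $\phi\in C^\infty_c(\justB)$ to the Dirichlet eigenfunctions $\localPEF_l$, $\localEF_l$, which the paper leaves implicit.
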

\begin{proof}
\begin{eqnarray*}
&&\bigg\|      \int_{\justB}
    \langle\nabla \left( G^\justB(z,w)-\tilde G^\justB(z,w)    \right),\nabla\psi(z)\rangle dz
\bigg\|_{L^\infty(\justB)}\\
&=&
\bigg\|      \int_{\justB} \sum_{i,j}
    \delta^{ij}\partial_i \left( G^\justB(z,w)-\tilde G^\justB(z,w)    \right)\partial_j\psi(z) dz
\bigg\|_{L^\infty(\justB)}\\
&=&
\bigg\|      \int_{\justB} \sum_{ i,j}
    \left(\delta^{ij}\partial_i G^\justB(z,w)-
        g^{ij}(z) \partial_i\tilde G^\justB(z,w)    \right)\partial_j\psi(z) dz\\
&\phantom{=}&     +
      \int_{\justB} \sum_{ i,j}
       \left(g^{ij}(z) \partial_i \tilde G^\justB(z,w)-\delta^{ij}\partial_i \tilde G^\justB(z,w)    \right)
                    \partial_j\psi(z) dz
\bigg\|_{L^\infty(\justB)}\\
&=&\bigg\|
\int_{\justB} \sum_{ i,j}
       \left((g^{ij}(z)-\delta^{ij}) \partial_i \tilde G^\justB(z,w) \right)
            \partial_j\psi(z) dz
\bigg\|_{L^\infty(\justB)}\\
&\lesssim&
    \epsilon\|\grad_y{\tilde G^\justB}\|_{L^1} \|\grad\psi\|_{L^\infty(\justB)}
\end{eqnarray*}
which gives \eqref{e:Gpertubationestimate-1}.
Using Lemma \ref{lemma-ef-bound} one also gets
\eqref{e:Gpertubationestimate-2} and \eqref{e:Gpertubationestimate-3}.
\end{proof}

\begin{lemma}
Let $J,\eta>0$ be given. Let $\efk$ be as in Lemma \ref{lemma-ef-bound}.
There is an $\epsilon_0$ which depends on $J$, $\eta$, $d$, $c_{\max}$, $c_{\min}$, $||g||_{\alphawedgeone}$, ${\alphawedgeone})$, so that if $\epsilon<\epsilon_0$, and $|g^{il}(x)-\delta^{il}|<\epsilon$, for $x\in\justB$,
then for $j<J$,
\begin{gather}\label{lemma-4-eq-3}
|\localEV_j-\localPEV_j|<\eta\localEV_j\,.
\end{gather}
\begin{gather}\label{lemma-4-eq-1}
\|\localEF_j-\localPEF_j\|_{L^\infty(\justB)}\lesssim_{d,c_{\min},c_{\max},||g||_{\alphawedgeone},{\alphawedgeone}}
    \eta Q_1 (\localPEV_l)
\end{gather}
where $Q_1$ is a polynomial of degree $2\efk$.
If $g\in \mathcal{C}^{\alpha}$ we also have
\begin{gather}\label{lemma-4-eq-2}
\|\grad(\localEF_j-\localPEF_j)\|_{L^\infty}\lesssim_{d,c_{\min},c_{\max},||g||_{\alphawedgeone},{\alphawedgeone}}\eta Q_2(\localPEV_l)
\end{gather}
where $Q_2$ is a polynomial of degree $2\efk+1$.
\label{lemma-4}
\end{lemma}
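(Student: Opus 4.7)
The eigenvalue bound \eqref{lemma-4-eq-3} is immediate from Lemma \ref{lemma-1}: I just choose $\epsilon_0=\epsilon_0(J,\eta)$ small enough so the hypotheses of Lemma \ref{lemma-1} apply and deliver $|\localEV_j-\localPEV_j|<\eta\localEV_j$ for all $j<J$. The content of the lemma is in the $L^\infty$ and gradient estimates \eqref{lemma-4-eq-1}--\eqref{lemma-4-eq-2}, which must be obtained by upgrading the $L^2$-closeness $\|\localEF_j-\localPEF_j\|_{L^2}\leq\eta$ (also from Lemma \ref{lemma-1}) using the Green-function representations
\begin{equation*}
\localEF_j=\localEV_j G^{\justB}\localEF_j\,,\qquad \localPEF_j=\localPEV_j\tilde G^{\justB}\localPEF_j\,,
\end{equation*}
together with the Green-function perturbation estimates \eqref{e:Gpertubationestimate-1}--\eqref{e:Gpertubationestimate-3}.

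The plan for \eqref{lemma-4-eq-1} is to telescope the difference
\begin{equation*}
\localEF_j-\localPEF_j
= (\localEV_j-\localPEV_j)\,G^\justB\localEF_j
+\localPEV_j(G^\justB-\tilde G^\justB)\localEF_j
+\localPEV_j\,\tilde G^\justB(\localEF_j-\localPEF_j)\,.
\end{equation*}
The first term equals $(\localEV_j-\localPEV_j)\localEF_j/\localEV_j$, which by Step 1 and Lemma \ref{lemma-ef-bound} is at most $C\eta\localEV_j^{\efk-1}$. For the second term, integration by parts turns it into $\localPEV_j\localEV_j^{-1}\int\langle\nabla(G^\justB-\tilde G^\justB)(\cdot,w),\nabla\localEF_j\rangle$, which by \eqref{e:Gpertubationestimate-3} is bounded by $C\epsilon\localEV_j^{\efk}$. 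The third (residual) term cannot be controlled directly in $L^\infty$ from the $L^2$-bound because $\tilde G^\justB(\cdot,w)\notin L^2$ in dimensions $d\geq 4$, so I iterate: substituting the same decomposition into $\localEF_j-\localPEF_j$ repeatedly, after $\efk$ steps the residual has the form $\localPEV_j^\efk(\tilde G^\justB)^\efk(\localEF_j-\localPEF_j)$, and the Young's-inequality argument already run in the proof of Lemma \ref{lemma-ef-bound} shows that $(\tilde G^\justB)^\efk$ is bounded from $L^2$ to $L^\infty$ with constant depending only on $d,c_{\min},c_{\max}$. Hence the residual is $\lesssim\localPEV_j^\efk\|\localEF_j-\localPEF_j\|_{L^2}\lesssim\eta\localPEV_j^\efk$. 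Each intermediate term produced by the iteration carries one factor of $\epsilon$ together with $L^\infty$ bounds on $\localEF_j$ or $\localPEF_j$ (of polynomial size $\localPEV_j^\efk$ by Lemma \ref{lemma-ef-bound}) and on iterated Green-operators (of size $\localPEV_j^{\efk}$ from the same Young estimates). Multiplying these contributions yields a polynomial in $\localPEV_j$ of total degree $2\efk$, giving \eqref{lemma-4-eq-1} after choosing $\epsilon_0$ so small that $\epsilon$ absorbs all the fixed polynomial prefactors.

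For the gradient bound \eqref{lemma-4-eq-2} I differentiate the outer Green kernel in $w$ in every term of the telescoping expansion above, replacing $G^\justB$ and $\tilde G^\justB$ by $\grad_w G^\justB$ and $\grad_w\tilde G^\justB$ in the outermost position. The pointwise bound \eqref{e:gradGestimate} and the Hölder bound \eqref{e:gradGholderestimate} on $\grad_w(\tilde G^\justB-G^\justB)$ (the latter needed for the perturbation piece, where \eqref{e:Gpertubationestimate-2}/\eqref{e:Gpertubationestimate-3} are applied after one additional integration by parts) supply the needed $L^\infty$ control. The extra derivative manifests as one additional factor of $\localPEV_j$ in each term, so the resulting polynomial $Q_2$ has degree $2\efk+1$.

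The main obstacle is the bookkeeping in the iteration: one has to extract a factor of $\epsilon$ at every stage of the telescoping (using the weak formulation of $\tilde\Delta^\justB$ to avoid differentiating the $\mathcal{C}^\alpha$ coefficients $g^{ij}$) and simultaneously keep track of the accumulated powers of $\localPEV_j$, verifying that they do not exceed $2\efk$ (respectively $2\efk+1$). This is where the Young's-inequality chain from Lemma \ref{lemma-ef-bound} is re-used, and where the precise form of the perturbation estimates \eqref{e:Gpertubationestimate-1}--\eqref{e:Gpertubationestimate-3} — which pair $\nabla(\tilde G^\justB-G^\justB)$ against $\nabla\localEF_l$ or $\nabla\localPEF_l$ rather than against higher-order objects — becomes essential, since it matches precisely the terms produced by the iteration.
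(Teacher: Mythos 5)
Your telescoping identity
$\localEF_j-\localPEF_j = (\localEV_j-\localPEV_j)\,G^\justB\localEF_j +\localPEV_j(G^\justB-\tilde G^\justB)\localEF_j +\localPEV_j\,\tilde G^\justB(\localEF_j-\localPEF_j)$
is, up to relabeling and the choice of which Green operator hits the eigenvalue-difference term, exactly the paper's decomposition into $E^1+E^2+\localPEV_l\,\tilde G^\justB*(\localEF_l-\localPEF_l)$, and the iteration you describe (extract a factor of $\epsilon$ or $\eta$ from $E^1+E^2$ at each stage, accumulate a $\localPEV_j\|\tilde G^\justB\|_{L^1}$ factor, stop after $\efk$ steps and kill the residual with the Young chain from Lemma \ref{lemma-ef-bound}) is precisely the paper's argument for \eqref{lemma-4-eq-1}, yielding the same total degree $2\efk$.

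Two small corrections. First, the intermediate degree count on the eigenvalue-difference term is slightly off: $(\localEV_j-\localPEV_j)G^\justB\localEF_j=\tfrac{\localEV_j-\localPEV_j}{\localEV_j}\localEF_j$, which by Lemma \ref{lemma-ef-bound} is $\lesssim\eta\localEV_j^{\efk}$, and the perturbation piece, after applying \eqref{e:Gpertubationestimate-3}, is $\lesssim\epsilon\localEV_j^{\efk+1}$ (not $\localEV_j^{\efk-1}$ and $\localEV_j^{\efk}$ as you wrote); these slips cancel out in the final degree count but are worth fixing. Second, for the gradient bound \eqref{lemma-4-eq-2} the paper does \emph{not} iterate: once \eqref{lemma-4-eq-1} is in hand it differentiates the defining integral representations in $y$ a single time, writing $\grad(\localPEF_j-\localEF_j)=\int\grad_y(\tilde G^\justB-G^\justB)\,\localPEV_j\localPEF_j+\int\grad_y G^\justB\,(\localPEV_j\localPEF_j-\localEV_j\localEF_j)$, and uses the $L^\infty$ estimate already obtained on the second integrand together with \eqref{e:annulusintgradG2} on the kernel; this is shorter and avoids recomputing the whole telescope with a gradient attached. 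Your plan to differentiate the full iterated expansion also works, but the estimate you cite for the perturbation piece, \eqref{e:gradGholderestimate}, is the H\"older modulus of $\nabla_x\tilde G^\justB$ in its first argument, \emph{not} a smallness estimate for $\nabla_y(\tilde G^\justB-G^\justB)$; both your argument and the paper's need a genuine perturbation bound on the gradient of the Green-function difference (of the form $\|\nabla_y(\tilde G^\justB-G^\justB)(\cdot,y)\|_{L^1(\justB)}\lesssim\epsilon$, say), and it would be cleaner to state that explicitly rather than attribute it to \eqref{e:gradGholderestimate}.
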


\begin{proof}
The bound \eqref{lemma-4-eq-3} follows from Lemma \ref{lemma-1}.
Let $q_i$ and $p_i$   be as in the proof of Lemma \ref{lemma-ef-bound}.

We have using the definitions of $G^\justB$ and $\tilde G^\justB$
\begin{eqnarray*}
\localEF_l(w)-\localPEF_l(w)
&=&
 \int_{\justB} \sum_{i,j}
    \delta^{ij}\partial_i G^\justB(z,w)\partial_j\localEF_l(z) -
    g^{ij}(z)\partial_i\tilde G^\justB(z,w) \partial_j\localPEF_l(z)                         dz\\
&=&
 \int_{\justB} \sum_{i,j}
    \delta^{ij}\left( \partial_i G^\justB(z,w)\partial_j\localEF_l(z) -
    \partial_i \tilde G^\justB(z,w)\partial_j\localEF_l(z)     \right)\\
&\phantom{=}&
         +
    \left( \delta^{ij}\partial_i \tilde G^\justB(z,w)\partial_j\localPEF_l(z)-
    g^{ij}(z)\partial_i\tilde G^\justB(z,w) \partial_j\localEF_l(z)          \right)                   dz\\
&=&
E^1(w) +
 \int_{\justB} \sum_{i,j}
    \left( \partial_i \tilde G^\justB(z,w)\partial_j\localEF_l(z)-
    g^{ij}(z)\partial_i\tilde G^\justB(z,w) \partial_j\localPEF_l(z)          \right)                   dz\\
&=&
E^1(w) +
\localEV_l \cdot \tilde  G^\justB * \localEF_l(w) -
\localPEV_l \cdot \tilde  G^\justB * \localPEF_l(w)\\
&=&
E^1(w) +
(\localEV_l-\localPEV_l) \cdot \tilde  G^\justB * \localEF_l (w)
 +
\localPEV_l \cdot \tilde  G^\justB * (\localEF_l-\localPEF_l)(w)\\
&=&
E^1(w) +
E^2(w) +
\localPEV_l \cdot \tilde  G^\justB * (\localEF_l-\localPEF_l)(w)\,.
\end{eqnarray*}
where we have from equation \eqref{e:Gpertubationestimate-3}
\begin{equation}
\|E^1\|_{L^\infty(\justB)}\\
\lesssim_{d,c_{\min},c_{\max},{\alphawedgeone},||g||_{\alphawedgeone}}
    \epsilon
        \localEV_l \Rresc{}(\localEV_l \Rresc{2})^{\efk}\Rresc{-\frac{d}2}
\end{equation}
and
\begin{equation}
\|E^2\|_{L^\infty(\justB)}\\
\lesssim_{d,c_{\min},c_{\max},{\alphawedgeone},||g||_{\alphawedgeone}}
    \eta \localEV_l\cdot
         \Rresc{}(\localEV_l \Rresc{2})^{\efk}\Rresc{-\frac{d}2}\,.
\end{equation}
Iterating, we have
\begin{eqnarray*}
|\localEF_l(w)-\localPEF_l(w)|
&=&
|E^1(w) +
E^2(w) +
\localPEV_l \cdot \tilde  G^\justB * (\localEF_l-\localPEF_l)(w)|\,.\\
&=&
|E^1(w) +
E^2(w) +
\localPEV_l \cdot \tilde  G^\justB * \left(E^1+E^2+\localPEV_l \cdot \tilde  G^\justB * (\localEF_l-\localPEF_l)\right)(w)|\,.\\
&=&...\\
&\leq&
\|E^1+E^2\|_{L^\infty(\justB)}
    \sum\limits_{k=0}^{\efk-1}
    \left(\localPEV_l \|\tilde G^\justB\|_{L^1(\justB)}    \right)^k
+
\localPEV_l^{\efk}
    |\underbrace{{\tilde G^\justB}*...*{\tilde G^\justB}}_{{\efk}\mathrm{\ times}}*(\localEF_l(w)-\localPEF_l(w))|\\
&\leq&
\|E^1+E^2\|_{L^\infty(\justB)}
    \sum\limits_{k=0}^{\efk-1}
    \left(\localPEV_l \|\tilde G^\justB\|_{L^1(\justB)}    \right)^k
+ \localPEV_l^{\efk}
    \|{\tilde G^\justB}\|_{L^\frac{d-1}{d-2}} \|\localEF_l-\localPEF_l \|_2\\
&\lesssim&
    2\eta
        \localEV_l \Rresc{}(\localEV_l \Rresc{2})^{\efk}\Rresc{-\frac{d}2}
            \sum\limits_{k=0}^{\efk-1}
                \localPEV_l ^k
+ \eta\cdot\localPEV_l^{\efk}=\eta Q_1 (\localPEV_l)
\end{eqnarray*}
where we require for the penultimate inequality $\epsilon<\eta$.

To prove the gradient estimate,
\begin{eqnarray}
\left|\grad (\localPEF_j-\localEF_j))(y)\right|&=&
\left|\grad_y\int \sum_{i,l}  \partial_{z_i} \tilde{G}^R(z,y) g^{il}\partial_{z_l} \localPEF_j(z)-\sum_{i,l}\partial_{z_i} G^\justB(z,y)\delta^{il}\partial_{z_l} \localEF_j(z)\right|\\
&=& \left|\grad_y\int    \tilde{G}^R(z,y)\localPEV_j \localPEF_j(z)-G^\justB(z,y)\localEV_j \localEF_j(z)\right|\\
&=&\left|\int       \grad_y\tilde{G}^R(z,y)\localPEV_j \localPEF_j(z)-\grad_y G^\justB(z,y)\localEV_j \localEF_j(z)\right|\\
&\leq& \int     \left|\grad_y(\tilde{G}^R-G^\justB)(z,y)\right| \cdot \left|\localPEV_j\localPEF_j(z)\right|\\ &&+    \left|\grad_yG^\justB(z,y)\right|\cdot \left| \localPEV_j \localPEF_j(z) - \localEV_j \localEF_j(z) \right|
\end{eqnarray}
Now using equation \eqref{lemma-4-eq-1}, Lemma \ref{lemma-1}, and Theorem \ref{GW-thm} we get equation \eqref{lemma-4-eq-2}.
\end{proof}

\subsubsection{Bounds on Eigenfunctions}
\label{s:BoundsOnEigenfunctionsManifoldCase}

The main goal of this section is to prove Proposition \ref{p:PhiLinftyEstimates}.
We note that the inequalities \eqref{e:LinftyPhij}, \eqref{e:LinftyGradPhij}, and \eqref{e:LinftyHolderGraphPhij} are invariant under scalings of the metric,
and so, once again, we assume in the proof of this Proposition and in all the Lemmata that $R=1$.
In this section all constants subsumed in $\lesssim$ and $\gtrsim$ will in general depend on $d,c_{\min},c_{\max},||g||_{\alphawedgeone},{\alphawedgeone}$.
We will need the following result.

\begin{lemma}[Lemma $3.1$ from  \cite{GruterWidman:GreenFunction}]
Suppose $h$ is a bounded solution of $\tilde\Delta^\justB h=0$ in $\justB$.
Then
\begin{equation}
|\grad h (x)|\lesssim_{d,c_{\min},c_{\max},||g||_{\alphawedgeone},{\alphawedgeone}} (1-\dist(x,z))^{-1} ||h||_{L^\infty(\justB)}\,.
\end{equation}
\label{l:GWPerturbLinfty}
\end{lemma}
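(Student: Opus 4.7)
The plan is to obtain this as a direct consequence of classical interior regularity for divergence-form uniformly elliptic operators with $\mathcal{C}^{\alphawedgeone}$ coefficients, combined with a scaling argument that produces the linear dependence on $(1-\dist(x,z))^{-1}$. Since the estimate is linear in $h$, we may assume $\|h\|_{L^\infty(\justB)}=1$, and we recall from the setup that $\justB=B_1(z)$ (so $\dist(x,z)<1$).

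First I would rescale to a unit ball. Fix $x\in\justB$ and set
\begin{equation*}
r=\tfrac{1}{2}\bigl(1-\dist(x,z)\bigr),
\end{equation*}
so that $B_r(x)\subset\justB$. Define $\tilde h(y):=h(x+ry)$ and $\tilde g^{ij}(y):=g^{ij}(x+ry)$ on the unit ball $B_1(0)$. Then $\|\tilde h\|_{L^\infty(B_1(0))}\le 1$, and $\tilde h$ solves a uniformly elliptic divergence-form equation whose coefficients $\tilde g^{ij}$ satisfy the same ellipticity bounds $c_{\min},c_{\max}$ and, by Remark \ref{r:hoelder-norm}, the improved Hölder bound $\|\tilde g^{ij}\|_{\mathcal{C}^{\alphawedgeone}(B_1(0))}\le r^{\alphawedgeone}\|g^{ij}\|_{\alphawedgeone}\le\|g\|_{\alphawedgeone}$.

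Next I would invoke the classical interior Schauder estimate for divergence-form operators with $\mathcal{C}^{\alphawedgeone}$ coefficients (the content of Lemma 3.1 of Grüter--Widman, which is in turn a corollary of standard De Giorgi--Nash--Moser theory followed by Campanato-type bootstrap) to conclude
\begin{equation*}
\|\grad\tilde h\|_{L^\infty(B_{1/2}(0))}\lesssim_{d,c_{\min},c_{\max},\|g\|_{\alphawedgeone},{\alphawedgeone}}\|\tilde h\|_{L^\infty(B_1(0))}\le 1.
\end{equation*}
Evaluating at $y=0$ and applying the chain rule, $\grad h(x)=r^{-1}\grad\tilde h(0)$, so
\begin{equation*}
|\grad h(x)|\lesssim r^{-1}=2\bigl(1-\dist(x,z)\bigr)^{-1},
\end{equation*}
which is the claim after restoring the factor $\|h\|_{L^\infty(\justB)}$.

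The main (and in fact only) nontrivial point is the availability of an interior $\mathcal{C}^{1}$ gradient estimate in divergence form under merely $\mathcal{C}^{\alphawedgeone}$ coefficient regularity. This is classical but does genuinely require $\alphawedgeone>0$: with only bounded measurable coefficients one has Hölder continuity of $h$ but no pointwise gradient bound, so this is exactly where the assumption $g\in\mathcal{C}^{\alphawedgeone}$ (as opposed to merely $L^\infty$) is used. Since this interior estimate is precisely the statement being cited from Grüter--Widman, no further work is needed beyond the rescaling described above.
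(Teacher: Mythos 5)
The paper does not give a proof of this statement: it is cited verbatim as Lemma~3.1 of Gr\"uter--Widman \cite{GruterWidman:GreenFunction}, so there is no ``paper's own proof'' to compare against. Your reconstruction is nevertheless correct and is the standard route to such a bound: after normalizing $\|h\|_{L^\infty(\justB)}=1$ and zooming in to $B_r(x)$ with $r=\tfrac12(1-\dist(x,z))$, the rescaled function solves a divergence-form uniformly elliptic equation (with coefficients $\sqrt{\det g}\,g^{ij}$, still $\mathcal{C}^{\alphawedgeone}$ with norm no larger than before, per Remark~\ref{r:hoelder-norm}), and the interior $\mathcal{C}^{1,\alphawedgeone}$ Schauder estimate gives the gradient bound on the half-ball; undoing the scaling produces the factor $r^{-1}\sim(1-\dist(x,z))^{-1}$. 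Your remark that $\alphawedgeone>0$ is genuinely needed --- bounded measurable coefficients give De Giorgi--Nash H\"older continuity of $h$ but not a pointwise gradient bound --- is exactly the right observation and explains why this particular hypothesis on $g$ appears in the lemma. In short: correct, standard, and the same content as the cited result, just unpacked.
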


\begin{lemma}
Assume that $g\in \mathcal{C}^{\alpha}$ and $\tilde\Delta^\justB h=0$ on $\justB=B_1(z)$.
Then for any $r<1$ 
\begin{equation*}
||h||_{L^\infty(B_r(z))}\lesssim_{d,c_{\min},c_{\max}} C_r ||h||_{L^2(B_{2r}(z))}\,
\end{equation*}
and
\begin{equation*}
||\grad h||_{L^\infty(B_{\frac r2}(z))}\lesssim_{d,c_{\min},c_{\max},||g||_{\alphawedgeone},{\alphawedgeone}} 
  C'_r ||h||_{L^2(B_{2r}(z))}\,.
\end{equation*}
\label{lemma-5}
\end{lemma}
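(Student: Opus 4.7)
The plan is to combine a classical De Giorgi--Nash--Moser interior $L^\infty$ estimate (which requires only uniform ellipticity with bounded measurable coefficients) with the gradient bound of Lemma \ref{l:GWPerturbLinfty}, using a rescaling to adapt the latter to an arbitrary radius $r<1$.

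First, for the $L^\infty$ bound on $h$. Since $h$ solves $\tilde\Delta^\justB h=0$ with $g^{ij}$ bounded and uniformly elliptic, both $h$ and $-h$ are weak subsolutions. Moser iteration on $B_r(z)\subset B_{2r}(z)\subset\justB$ then yields
\begin{equation*}
\|h\|_{L^\infty(B_r(z))}\lesssim_{d,c_{\min},c_{\max}} r^{-d/2}\,\|h\|_{L^2(B_{2r}(z))}\,,
\end{equation*}
and we set $C_r:=c(d,c_{\min},c_{\max})\, r^{-d/2}$. Note no regularity of $g$ beyond $L^\infty$ is used, which matches the stated dependence of constants in the first inequality.

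Second, for the gradient bound, the natural approach is to reduce to Lemma \ref{l:GWPerturbLinfty} via a scaling argument. Define $\tilde h(y):=h(z+ry)$ on $B_1(0)$; then $\tilde h$ satisfies $\tilde\Delta \tilde h=0$ for the rescaled coefficients $\tilde g^{ij}(y):=g^{ij}(z+ry)$, which remain uniformly elliptic with the same constants $c_{\min},c_{\max}$ and satisfy $\|\tilde g\|_{\alphawedgeone}=r^{\alphawedgeone}\|g\|_{\alphawedgeone}\le\|g\|_{\alphawedgeone}$ (see Remark \ref{r:hoelder-norm}). Applying Lemma \ref{l:GWPerturbLinfty} to $\tilde h$ on $B_{1/2}(0)$ gives
\begin{equation*}
\|\grad\tilde h\|_{L^\infty(B_{1/2}(0))}\lesssim_{d,c_{\min},c_{\max},\|g\|_{\alphawedgeone},{\alphawedgeone}}\|\tilde h\|_{L^\infty(B_1(0))}\,,
\end{equation*}
and undoing the scaling,
\begin{equation*}
\|\grad h\|_{L^\infty(B_{r/2}(z))}\lesssim r^{-1}\|h\|_{L^\infty(B_r(z))}\,.
\end{equation*}

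Finally, chaining the two estimates,
\begin{equation*}
\|\grad h\|_{L^\infty(B_{r/2}(z))}\lesssim r^{-1}C_r\|h\|_{L^2(B_{2r}(z))}=:C'_r\|h\|_{L^2(B_{2r}(z))}\,,
\end{equation*}
with the claimed dependence of $C'_r$ on $d,c_{\min},c_{\max},\|g\|_{\alphawedgeone},{\alphawedgeone}$. The only nonroutine step is the $L^\infty$ bound, which is standard but not explicitly stated earlier in the paper; the main technical obstacle is verifying that Moser iteration (or an equivalent argument) applies in the weak/divergence-form setting we are in, but this is covered by classical theory for operators of the form \eqref{e:DeltaM} with bounded measurable uniformly elliptic coefficients, so there is no genuine difficulty.
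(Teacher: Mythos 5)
Your proof is correct, but the argument for the $L^\infty$ bound takes a different route from the paper's. For the gradient bound you and the paper both invoke Lemma \ref{l:GWPerturbLinfty} (the paper applies it directly with the Green function of the ball of radius $r$ rather than rescaling, but this is cosmetic). For the $L^\infty$ bound, the paper gives a self-contained argument: it applies the coarea formula to $\tilde G^r$ to find a good level set $\{\tilde G^r = t^*\}$ on which the boundary integral of $|h|$ is controlled by $\|h\|_{L^2}$ via the $(L^*)^{d/(d-1)}$ estimate on $\nabla\tilde G^r$ from Theorem \ref{t:WG}, then represents $h(w)$ as the Poisson-type integral over that level set and bounds it using \eqref{e:gradGestimate}. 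You instead cite the De Giorgi--Nash--Moser local boundedness theorem outright. Both are valid; the paper's version is entirely internal to the estimates on the Green function it has already developed (and keeps the $L^\infty$ bound independent of $\|g\|_\alpha$, as you also note), while your version outsources that step to classical elliptic regularity. One small point worth flagging in your write-up: the statement quantifies over $r<1$, but the appearance of $B_{2r}(z)$ on the right-hand side (both in the statement and in your Moser step) only makes sense inside $\justB=B_1(z)$, so implicitly $r\le 1/2$; your argument, like the paper's, uses this tacitly.
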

\begin{proof}
Let  $r$ as above be given.
Fix  $0<a_1<a_2<1$. By the coarea formula \cite{Federer:GeometricMeasureTheory}, we have
\begin{equation*}
\begin{aligned}
\int_{a_1}^{a_2}\int_{\{x\in B_r(z): \tilde G^r(x)=t\}}&|h(x)|\,d\mathcal{H}^{d-1}(x) dt
=
\int_{\{x\in B_r(z):a_1<\tilde G^r(x)<a_2\}}|h\grad \tilde G^r| dx\\
&\le ||h||_{L^2(B_r(z))}|| |\grad \tilde G^r| ||_{L^2(\ball_{C(d,\frac{c_{\max}}{c_{\min}} a_2^{d-2})}(z) \setminus\ball_{c(d,\frac{c_{\max}}{c_{\min}} a_1^{d-2})}(z))}\\
&\lesssim ||h||_{L^2(\ball_r(z))}
\end{aligned}
\end{equation*}
by estimate \eqref{e:annulusintgradG}.
Hence there exists $t^*\in [a_1,a_2]$ such that
\begin{equation*}
\int_{\{x\in B_r(z): \tilde G^r=t^*\}}|h(x)|\,d\mathcal{H}^{d-1}(x)\lesssim ||h||_{L^2(\ball_r(z))}(a_2-a_1)^{-1}
\end{equation*}

Now, by $\tilde\Delta^\justB$ harmonicity,
\begin{equation*}
\begin{aligned}
|h(z)|&=\left|\int_{\{y\in \ball_r(z): \tilde G^{r}(y)=t^*\}} h(y)\frac{\partial \tilde G^{r}}{\partial n}(z,y)d\mathcal{H}^{d-1}(y)\right|\\
&\le ||\grad \tilde G^{r}||_{L^\infty(\{\tilde G^{r}(y)=t^*\})}\int_{\{y\in \ball_r(z): \tilde G^{r}=t^*\}} |h(y)|\,d\mathcal{H}^{d-1}(y)\\
&\lesssim  ||h||_{L^2(\ball_r(z))}\lesssim_{a_1,a_2,d,c_{\min},c_{\max}}||h||_{L^2(\ball_r(z))}\,.
\end{aligned}
\end{equation*}
Essentially the same proof holds if we replace $z$ by $w\in B_r(z))$ in the above estimates, giving the desired bound on $h$.

In order to estimate the gradient, we use Lemma \ref{l:GWPerturbLinfty}, which gives us
\begin{equation*}
||\grad h||_{L^\infty(B_{\frac r2}(z))}
\lesssim_{d,c_{\min},c_{\max}, ||g||_{\alphawedgeone},{\alphawedgeone}} C'_r ||h||_{L^\infty(B_r(z))}\,.
\end{equation*}
which implies the desired estimate.
\end{proof}

\begin{lemma}
Assume that $g\in \mathcal{C}^{\alpha}$.
Let $\localPEF_j$ and $\globalEF_k$ be as above. Then we have the estimate
\begin{equation}
\|\localPEF_j\globalEF_{k}\|_{L^{\frac{2d}{d-2}}(\justB)}\lesssim_{d,c_{\min},c_{\max},||g||_{{\alphawedgeone}},{\alphawedgeone}}
(((\localPEV_j+\globalEV_k)\Rresc{})^\frac12+\localPEV_j\Rresc{})(\localPEV_j\Rresc{2})^\efk||\globalEF_{k}||_{L^2(\justB)}\,.
\label{e:Phiontildevarphi}
\end{equation}
\label{lemma-6}
\end{lemma}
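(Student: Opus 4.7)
My plan is to bound the left-hand side via the Sobolev embedding $W^{1,2}_0(\justB) \hookrightarrow L^{\frac{2d}{d-2}}(\justB)$ and then estimate the $L^2$ norm of the gradient of the product by combining the $L^\infty$ bounds from Lemma~\ref{lemma-ef-bound} with an integration-by-parts trick that exploits the eigenvalue equation for $\globalEF_k$. As in the proof of Proposition~\ref{p:PhiLinftyEstimates}, I will rescale so that $\Rrescb = 1$, and assume $d\ge 3$ (the $d=2$ case follows from the $\mathcal{M}\times\mathbb{T}$ device used elsewhere in the paper). Since $\localPEF_j$ satisfies Dirichlet boundary conditions on $\partial\justB$, the product $u := \localPEF_j\globalEF_k$ lies in $W^{1,2}_0(\justB)$, and the Sobolev inequality combined with uniform ellipticity gives
\begin{equation*}
\|\localPEF_j\globalEF_k\|_{L^{\frac{2d}{d-2}}(\justB)} \lesssim \|\nabla(\localPEF_j\globalEF_k)\|_{L^2(\justB)}
\lesssim \|\globalEF_k\nabla\localPEF_j\|_{L^2(\justB)} + \|\localPEF_j\nabla\globalEF_k\|_{L^2(\justB)}\,.
\end{equation*}

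For the first term I would apply the $L^\infty$ gradient bound $\|\nabla\localPEF_j\|_{L^\infty}\lesssim \localPEV_j(\localPEV_j)^{\efk}$ from Lemma~\ref{lemma-ef-bound} to obtain
\begin{equation*}
\|\globalEF_k\nabla\localPEF_j\|_{L^2(\justB)} \lesssim \localPEV_j(\localPEV_j)^{\efk}\|\globalEF_k\|_{L^2(\justB)}\,,
\end{equation*}
which already has the right structure.

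The second term is the main obstacle, because I have no pointwise bound on $\globalEF_k$ or its gradient on $\justB$. My plan is to use the fact that $\localPEF_j^2\globalEF_k$ vanishes on $\partial\justB$ and hence, extended by zero, is an admissible test function for the weak form of the equation $\Delta_\mathcal{M}\globalEF_k = \globalEV_k\globalEF_k$. This yields
\begin{equation*}
\localgquadratic(\localPEF_j^2\globalEF_k,\globalEF_k) \;=\; \globalEV_k\int_\justB \localPEF_j^2\globalEF_k^2\,.
\end{equation*}
Expanding via the product rule and isolating the pure $\localPEF_j^2|\nabla\globalEF_k|^2$ contribution, uniform ellipticity together with Cauchy--Schwarz produces a quadratic inequality in the quantity $A := \|\localPEF_j\nabla\globalEF_k\|_{L^2(\justB)}$:
\begin{equation*}
c_{\min}A^2 \;\le\; \globalEV_k\|\localPEF_j\globalEF_k\|_{L^2(\justB)}^2 \;+\; 2c_{\max}A\,\|\globalEF_k\nabla\localPEF_j\|_{L^2(\justB)}\,.
\end{equation*}
Solving this quadratic (or, equivalently, absorbing the cross term via Young's inequality) gives
\begin{equation*}
A \lesssim \globalEV_k^{1/2}\|\localPEF_j\globalEF_k\|_{L^2(\justB)} + \|\globalEF_k\nabla\localPEF_j\|_{L^2(\justB)}\,.
\end{equation*}

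To finish, I would insert $\|\localPEF_j\globalEF_k\|_{L^2(\justB)} \le \|\localPEF_j\|_{L^\infty}\|\globalEF_k\|_{L^2(\justB)} \lesssim (\localPEV_j)^{\efk}\|\globalEF_k\|_{L^2(\justB)}$ (again from Lemma~\ref{lemma-ef-bound}) into the bound for $A$, and combine with the estimate for the first term. This produces the factor $(\globalEV_k^{1/2}+\localPEV_j)(\localPEV_j)^{\efk}$, which, after reintroducing the $\Rrescb$ factors by scaling, matches the claimed bound $\bigl(((\localPEV_j+\globalEV_k)\Rrescb)^{1/2}+\localPEV_j\Rrescb\bigr)(\localPEV_j\Rrescb^2)^{\efk}\|\globalEF_k\|_{L^2(\justB)}$, since $\globalEV_k^{1/2}\le(\localPEV_j+\globalEV_k)^{1/2}$. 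The subtle checks are (i) that $\localPEF_j^2\globalEF_k$ is genuinely a legal test function in the global weak form on $\mathcal{M}$, which is ensured by $\localPEF_j|_{\partial\justB}=0$, and (ii) the correct handling of $\localgquadratic$ versus $|\nabla \cdot|^2$ via the ellipticity constants.
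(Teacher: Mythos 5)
Your proof is correct and follows essentially the same route as the paper: reduce to $\|\nabla(\localPEF_j\globalEF_k)\|_{L^2}$ by Sobolev embedding, dispose of $\|\globalEF_k\nabla\localPEF_j\|_{L^2}$ via the $L^\infty$ bounds of Lemma~\ref{lemma-ef-bound}, and control $\|\localPEF_j\nabla\globalEF_k\|_{L^2}$ by testing the eigenvalue equation and solving the resulting quadratic inequality. The only (minor) variant is that you pair the test function $\localPEF_j^2\globalEF_k$ against the global equation $\Delta_{\mathcal{M}}\globalEF_k=\globalEV_k\globalEF_k$, whereas the paper computes $\tilde\Delta^\justB(\localPEF_j\globalEF_k)$ using both eigenvalue equations and pairs it with $\localPEF_j\globalEF_k$; both integrations by parts are legitimate for the same reason ($\localPEF_j|_{\partial\justB}=0$) and yield equivalent quadratic inequalities, yours even giving the slightly sharper coefficient $\globalEV_k^{1/2}$ rather than $(\localPEV_j+\globalEV_k)^{1/2}$ before the final inflation.
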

\begin{proof}
By  the Sobolev embedding Theorem it is enough to prove that
\begin{gather}\label{14-11-05}
\|\grad(\localPEF_j\globalEF_{k})\|_{L^2(\justB)}\lesssim
(((\localPEV_j+\globalEV_k)\Rresc{})^\frac12+\localPEV_j\Rresc{})(\localPEV_j\Rresc{2})^\efk||\globalEF_{k}||_{L^2(\justB)}\,.
\end{gather}
To this end, first note that we may write
\begin{equation}
\begin{aligned}
\tilde\Delta^\justB(\localPEF_j\globalEF_{k})&= \globalEF_{k}\tilde\Delta^\justB\localPEF_j + \localPEF_j\tilde\Delta^\justB\globalEF_{k} + \sum_{i,j=1}^d g^{ij}\partial_i\globalEF_{k}\partial_j\localPEF_j\\
&=(\localPEV_{j}+\globalEV_{k})\globalEF_{k}\localPEF_j + \sum_{i,j=1}^d g^{ij}\partial_i\globalEF_{k}\partial_j\localPEF_j
\end{aligned}
\label{e:deltavarphiPhieq}
\end{equation}
and so $\tilde\Delta^\justB(\localPEF_j \globalEF_{k})$ is defined as a function, and not just a distribution.
Observe that $\globalEF_l$ does not satisfy any particular boundary condition on $\partial{\justB}$, however since $\localPEF_j=0$ on $\partial{\justB}$, integration by parts gives
\begin{gather}
\langle\localPEF_j \globalEF_{k},\tilde\Delta^\justB(\localPEF_j \globalEF_{k})\rangle_{\justB} = \localgquadratic(\grad(\localPEF_j\globalEF_{k}),\grad(\localPEF_j\globalEF_{k}))\,.
\end{gather}

Now, since $g$ is a positive quadratic form,
\begin{eqnarray*}
\langle\grad(\localPEF_j \globalEF_{k}),\grad(\localPEF_j \globalEF_{k})\rangle_{g_\justB}&=& \langle \localPEF_j\grad\globalEF_k+\globalEF_k\grad\localPEF_j,\localPEF_j\grad\globalEF_k+\globalEF_k\grad\localPEF_j\rangle_{g_\justB} \\
&\geq& \langle\localPEF_j\grad\globalEF_k,\localPEF_j\grad\globalEF_k\rangle_{g_\justB}-2\left|\langle\globalEF_k\grad\localPEF_j,\localPEF_j\grad\globalEF_k\rangle_{g_\justB}\right|\\
\end{eqnarray*}
and therefore
\begin{equation*}
\begin{aligned}
\localgquadraticr(\localPEF_j\grad\globalEF_{k},\localPEF_j\grad\globalEF_{k})
&\le
\localgquadraticr(\grad(\localPEF_j\globalEF_{k}),\grad(\localPEF_j\globalEF_{k}))+
2|g_R(\globalEF_{k} \grad \localPEF_j, \localPEF_j \grad\globalEF_{k})|\\
&=
\langle\localPEF_j\globalEF_{k}, \tilde\Delta^\justB(\localPEF_j\globalEF_{k})\rangle_{\justB}+
2|g_R(\globalEF_{k} \grad \localPEF_j, \localPEF_j \grad\globalEF_{k})|\\
&\le
\langle\localPEF_j\globalEF_{k},((\localPEV_j+\globalEV_k)\localPEF_j\globalEF_{k})\rangle_{\justB}+
4|g_R(\globalEF_{k} \grad \localPEF_j, \localPEF_j \grad\globalEF_{k})|\\
&\lesssim
(\localPEV_j+\globalEV_k)||\localPEF_j^2||_\infty||\globalEF_k||^2_{L^2(\justB)}+
 ||\grad\localPEF_j||_\infty||\globalEF_k||_{L^2(\justB)}\localgquadraticr(\localPEF_j\grad\globalEF_{k},\localPEF_j\grad\globalEF_{k})^\frac12\\
&\lesssim
(\localPEV_j+\globalEV_k)(\localPEV_j\Rresc{2})^{2\efk}||\globalEF_{k}||^2_{L^2(\justB)}
+ \localPEV_j\Rresc{}(\localPEV_j\Rresc{2})^\efk ||\globalEF_{k}||_{L^2(\justB)} \localgquadraticr(\localPEF_j\grad\globalEF_{k},\localPEF_j\grad\globalEF_{k})^\frac12
\end{aligned}
\end{equation*}
giving
\begin{gather*}
\localgquadraticr(\localPEF_j\grad\globalEF_{k},\localPEF_j\grad\globalEF_{k})^\frac12=||\localPEF_j\grad\globalEF_{k}\|_{L^2(B_r(z))}
\lesssim
((\localPEV_j+\globalEV_k)^\frac12+\localPEV_j\Rresc{})(\localPEV_j\Rresc{2})^\efk||\globalEF_{k}||_{L^2(\justB)}
\end{gather*}
Finally,
\begin{eqnarray*}
\grad(\localPEF_j \globalEF_k)\leq |\globalEF_k||\grad\localPEF_j| + |\localPEF_j\grad \globalEF_k|
\end{eqnarray*}
gives equation \eqref{14-11-05}.
\end{proof}

\begin{proof}[Proof of Proposition \ref{p:PhiLinftyEstimates}]
We recall that we rescaled so that $R=1$.
Let $\psi=\sum_1^N a_j\localEF_j$ be a (finite) sum of (Euclidean) Dirichlet eigenfunctions of $\justB$ such that
\begin{gather*}
\frac12\leq \psi(x)\leq 2\,\,\,\mathrm{and\ }\,\,\,x\in B_{R/2}(z)\subsetneq \justB
\end{gather*}
and $\sum_1^N |a_j|\leq C$, $\localEV_j\leq C$, $1\leq j\leq N$.
One may obtain such a sequence by taking $\psi'\in C^\infty(\justB)$ with $0\le\psi'\le1$,
$\psi' |_{B_{R/2}(z)}=1$ and $\psi'|_{\partial \justB(z)}=0$ and then take $\psi$ to be a truncation of the eigenfunction expansion of $\psi'$.
Let $\tilde{\psi}=\sum_1^N a_j\localPEF_j$ be the sum of the corresponding Dirichlet eigenfunctions for $\justB$ with respect to $\tilde\Delta^\justB$.
By lemma \ref{lemma-4} and $|g^{ik}(x)-\delta^{ik}|<\epsilon$ (with $\epsilon$ sufficiently small), we have, for $x\in B_{R/2}(z)$,
\begin{gather*}
\frac14\leq \tilde{\psi}(x)\leq 3
\end{gather*}

By Lemma \ref{lemma-6}: 
\begin{equation}
\begin{aligned}
\|\globalEF_j\|_{L^\frac{2d}{d-2}(\justB))}\leq
\|\tilde{\psi} \globalEF_j\|_{L^\frac{2d}{d-2}(\justB))}\leq
\sum |a_i| \| \localPEF_i \globalEF_j\|_{L^\frac{2d}{d-2}(\justB)} \lesssim\\
\sum |a_i|((\localPEV_i+\globalEV_j)^\frac12+\localPEV_i\Rresc{})(\localPEV_i\Rresc{2})^\efk||\globalEF_{j}||_{L^2(\justB)}
\lesssim_C
(\globalEV_j+1)^\frac12 ||\globalEF_{j}||_{L^2(\justB)}\,.
\label{e:chaindualG}
\end{aligned}
\end{equation}

We are now ready to prove inequality \eqref{e:LinftyPhij}.
Let $r_0=R=1>r_1>r_2>\dots\ge\frac R2=\frac12$.
Write $\globalEF_j|_{\ball_{r_0}(z)}$ on  as $\globalEF_j|_{\ball_{r_0}(z)}=u+v$ where
\begin{gather}\label{v-def}
v=\tilde G^\justB(\tilde\Delta^\justB\globalEF_j)=\globalEV_j \tilde G^\justB(\globalEF_j)
\end{gather}
since  $\tilde G^\justB$ is the Green function for the Dirichlet problem on $\ball_{r_0}(z)$.
Hence $\tilde\Delta^\justB u=0$.
We  use (see below) Lemma \ref{lemma-5} in conjunction with the above decomposition, to show that $\globalEF_j\in L^\infty(\ball_{r_\infty}(z))$.
We will then (see below) get \eqref{e:LinftyGradPhij} from differentiating \eqref{v-def}  and  using Lemma \ref{lemma-5}.
Initially, by \eqref{e:chaindualG}, Theorem \ref{GW-thm},  \eqref{v-def} and Young's inequality,
with $p_0=\frac{2d}{d-2}$ and $1\leq p_1=\frac{2d}{d-6+\eta_1}$ (with $0<\eta_1<4$ of our choice, implied by the estimates on the Green function in Theorem \ref{t:WG}), we have
\begin{gather*}
\|v\|_{L^{p_1}(B_{r_0}(z))}\lesssim \globalEV_j\|\globalEF_j\|_{L^{p_0}(B_{r_0}(z))}
\end{gather*}
giving, by Lemma \ref{lemma-5} (since $p_1>p_0>2$),
\begin{gather*}
\|u\|_{L^\infty(B_{r_1}(z))}\lesssim \|u\|_{L^2(B_{r_0}(z))}\lesssim (1 + \globalEV_j)\|\globalEF_j\|_{L^2(B_{r_0}(z))}\,.
\end{gather*}
Thus, we have
\begin{gather*}
\|\globalEF_j\|_{L^{p_1}(B_{r_1}(z))}\lesssim (1+\globalEV_j)\|\globalEF_j\|_{L^{p_0}(B_{r_0}(z))}
\lesssim   (\globalEV_j+1)^\frac32 ||\globalEF_{j}||_{L^2(\justB)}\,.
\end{gather*}
Let $1\leq p_i=\frac{2d}{d-2-4i+\sum_{k\leq i }\eta_k}$ (with $0<\eta_i<4$ of our choice) and $v_i=\tilde G^{r_i}(\tilde\Delta^{r_i}\globalEF_j)$. Similarly, we have
\begin{gather*}
\|v_i\|_{L^{p_i}(B_{r_{i-1}}(z))}\lesssim \globalEV_j\|\globalEF_j\|_{L^{p_{i-1}}(B_{r_{i-1}}(z))}
\end{gather*}
and for $u_i=\globalEF_j-v_i$
\begin{gather*}
\|u_i\|_{L^\infty(B_{r_i}(z))}\lesssim  \|u\|_{L^2(B_{r_{i-1}}(z))}\lesssim (1 + \globalEV_j)\|\globalEF_j\|_{L^2(B_{r_{i-1}}(z))}\,.
\end{gather*}
Thus, we have by induction
\begin{gather*}
\|\globalEF_j\|_{L^{p_i}(B_{r_i}(z))}\lesssim (1+\globalEV_j)\|\globalEF_j\|_{L^{p_{i-1}}(B_{r_{i-1}}(z))}
\lesssim   (\globalEV_j+1)^{i+\frac12} ||\globalEF_{j}||_{L^{p_2}(\justB)}\,.
\end{gather*}

Let $\beta$ be the smallest integer larger or equal than $\frac{d-2}4$.
We may choose $\{\eta_i\}$ so that $p_\beta=\infty$.
This gives  equation \eqref{e:LinftyPhij}.

In order to upper bound $\|\grad \globalEF_j\|$ we note that (recalling that $r_\beta \sim R\sim 1$)
\begin{equation*}
\|\grad v_{\beta}\|_{L^\infty(B_{r_\beta})}
=
\|\grad \tilde G^{r_\beta}(\tilde\Delta^{r_\beta}\globalEF_j)\|_{L^\infty(B_{r_\beta})}
\leq
\globalEV_j \|\tilde G^{r_\beta}\|_{L^1(B_{r_\beta})}       \|\globalEF\|_{L^\infty(B_{r_\beta})}\,.
\end{equation*}
We also note that we have
\begin{equation*}
\|\grad u_{\beta}\|_{L^\infty(B_{\frac12r_\beta})}\lesssim \|\globalEF\|_{L^\infty(B_{r_\beta})}
\end{equation*}
from Lemma \ref{lemma-5}.
Thus combining the last two estimates, we have
\eqref{e:LinftyGradPhij}.

Finally, we prove \eqref{e:LinftyHolderGraphPhij}.
Let $\chi\in C^\infty(\R)$ be a function so that $0\le\chi\le1$, $\chi(s)|_{s\leq K_1}=0$ and $\chi(s)|_{s\geq K_2}=1$.
We  define  $\eta$,  a cutoff function, such that
$\eta|_{\ball(z,\frac14 R)}=1$ and $\eta|_{|x|\geq \frac12 R}=0$ as follows.
Define $\eta(x)=\chi(G(z,x))$, and choose $K_1,K_2$ above so that $\eta$ has the desired cutoff radius.
We get that
\begin{equation*}
\begin{aligned}
\tilde\Delta^\justB(\eta)(x)&= \tilde\Delta(\chi(G(z,x)))=
\sum_{i,j}\partial_{x_i}g^{ij}\partial_{x_j}\chi(G(z,x))
=
\sum_{i,j}\partial_{x_i}(\chi'(G(z,x)))g^{ij}\partial_{x_j} G(z,x)\\
& =
\chi''(G(z,x))\left(\sum_{i,j}\partial_{x_i} G(z,x)g^{ij} \partial_{x_j} G(z,x)) +\chi'(G(z,x)\right) \tilde\Delta_x G(z,x)\,,
\end{aligned}
\end{equation*}
where the second term in the last line is 0 as  $\tilde\Delta_x G(z,x)$ is a distribution which equals
0 on the support of $\chi'(G(z,x))$.
By choice of $\chi$ and Theorem \ref{t:WG}, this gives $\tilde\Delta(\eta)\lesssim1$.

Now, let $x,y\in B=\ball(z,\frac14 R)$.  Let $\tilde{G}=\tilde{G}^B$ be the Green's function for $B$.
\begin{equation}
\begin{aligned}
||\grad \globalEF (x)- \grad \globalEF(y)||
&=
||\grad (\globalEF\eta) (x)- \grad \globalEF\eta)(y)||\\
&=
\left\|\int \left(\grad_1 G (x,w)-\grad_1 G(y,w)\right)  \tilde\Delta^\justB (\globalEF\eta)(w) dw\right\|\\
&\leq
|\tilde\Delta^\justB (\globalEF\eta)(w)|_{L^\infty(\justB)} \int ||\grad_1 G (x,w)-\grad_1 G(y,w)||dw\,.
\end{aligned}
\label{e:gradbounddiff}
\end{equation}
We have (using uniform ellipticity as well as Proposition \ref{p:PhiLinftyEstimates})
\begin{equation}
\begin{aligned}
|\tilde\Delta^\justB (\globalEF\eta)(w)|
&\lesssim
|\eta \tilde\Delta^\justB \globalEF(w)| + | \globalEF(w)\tilde\Delta^\justB \eta | + |\grad \eta| |\grad \globalEF|\\
&\leq
\left( ||\tilde\Delta^\justB \eta||_\infty + \globalEV||\eta||_\infty\right) ||\globalEF||_{L^\infty(\justB)} +
        ||\grad\eta||_\infty ||\grad \globalEF||_{L^\infty(\justB)}\\
&\lesssim
\left( 1 + \globalEV \right) ||\globalEF||_{L^\infty(\justB)} +
         ||\grad \globalEF||_{L^\infty(\justB)}\\
&\lesssim
\left( 1 + \globalEV\right) \efpoly(\globalEV) \|\globalEF\|_{L^2(\justB)} +
        \globalEV  \gefpoly(\globalEV)  \|\globalEF\|_{L^2(\justB)}\\
&\lesssim
\left( (1 + \globalEV) \efpoly(\globalEV) +
        \globalEV  \gefpoly(\globalEV)\right)  \|\globalEF\|_{L^2(\justB)}\\
\end{aligned}
\label{e:LinftyBoundOnLaplacian}
\end{equation}
by using \eqref{e:LinftyGradPhij}.
Combining \eqref{e:gradbounddiff} with \eqref{e:LinftyBoundOnLaplacian} and \eqref{e:gradGholderestimate} we get
\begin{eqnarray}
\|\grad \globalEF (x)- \grad \globalEF(y)\|
&\lesssim&
\ggefpoly(\globalEV) \|\globalEF\|_{L^2(\justB)}  |x-y|^{\alphawedgeone}\,.
\end{eqnarray}
\end{proof}

\begin{proof}[Proof of Lemma \ref{lemma-4'}]
This follows from  Lemma 20 in \cite{GHL_RiemannianGeometry}  together with Proposition \ref{p:PhiLinftyEstimates};
we have $\mathcal{C}^{1+{\alphawedgeone}}$ functions which are close in $L^2(\justB)$ .
Hence, they are also close in $L^\infty(\justB)$, i.e. equation \eqref{lemma-4'-eq-1} holds
and so does \eqref{lemma-4'-eq-2}.
\end{proof}

\subsubsection{Heat kernel estimates}
\label{s:HeatKernelEstimatesManifold}

This subsection makes no assumptions on the finiteness of the volume of $\cM$ and the existence of $\Cweyl$ for the manifold $\cM$.  It will however use these properties for a manifold ball.

We fix a ball $B=\ball_R(x)$ for which we estimate the heat kernel $\localPK^\justB$ by comparing it to $K^\justB$.
Suppose that $\{\localEF_j\}$ is an orthonormal basis for $L^2(\tilde{B})$ (with manifold measure).
In this section all constants subsumed in $\lesssim,\gtrsim$ and $\sim$ will in general depend on $d,c_{\min},c_{\max},||g||_{\alphawedgeone},{\alphawedgeone}$.

\begin{lemma}
Let $A_1>1$ and a sufficiently small $\eta_0=\eta_0(A_1)>0$ be given.
Assume $\epsilon_0$ is sufficiently small (depending on $\eta_0$,  $A_1$, as well as the usual $d$, $c_{\min}$, $c_{\max}$, $||g||_{\alphawedgeone}$, ${\alphawedgeone}$), and $|g^{ik}(x)-\delta^{ik}|<\epsilon_0$.
For
$y\in \ball_{\frac{R}2}(x)\subset \Omega$, with  $|x-y|^2\lesssim t\sim R^2\leq 1$ in a similar fashion to   Assumption \assumptionA,
 we have
\begin{gather}\label{l:lh-2}
\sum_{\localEV_i \leq \frac{A_1}t} \localEF_i(x)\localEF_i(y)e^{-\localEV_i t}
\sim_{\eta_0,A_1,d,c_{\max},c_{\min},||g||_{\alphawedgeone},{\alphawedgeone}}
\sum_{\localPEV_i \leq \frac{A_1}t} \localPEF_i(x)\localPEF_i(y)e^{-\localPEV_i t}\,.
\end{gather}
If in addition we also have $|x-y|^2\sim t$ then
\begin{eqnarray}
\left| \sum_{\localEV_i \leq \frac{A_1}t} \localEF_i(x)\grad\localEF_i(y)e^{-\localEV_i t}
-
\sum_{\localPEV_i \leq \frac{A_1}t} \localPEF_i(x)\grad\localPEF_i(y)e^{-\localPEV_i t} \right|
\lesssim_{A_1,d,c_{\max},c_{\min},||g||_{\alphawedgeone},{\alphawedgeone}}
\ \     \eta_0\cdot \frac{R}{t}t^\frac {-d}2 \,.
\label{l:grad-lh-3}
\end{eqnarray}
The constants in \eqref{l:lh-2} 
go to 1 as $\eta_0\to 0$.
\label{large-head}
\end{lemma}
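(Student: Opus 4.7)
The plan is to compare the two truncated spectral sums term by term, using the quantitative perturbation estimates of Lemma \ref{lemma-4}, and to rely on Weyl's Lemma \ref{l:weyls-lemma} to ensure that only finitely many (constantly many in $A_1$) terms contribute. Since in this section we have rescaled so that $R\sim 1$ and $t\sim R^2\sim 1$, Weyl's bound gives $\#\{i:\localEV_i\le A_1/t\}\lesssim_{d,c_{\min},c_{\max}} |B|(A_1/t)^{d/2}\lesssim_{A_1} 1$, and similarly for $\localPEV_i$. This is the feature that allows the per-term errors produced by Lemma \ref{lemma-4} to aggregate into a controlled total.

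First, apply Lemma \ref{lemma-4} with $J$ slightly larger than $A_1/t$ and with a perturbation parameter $\eta=\eta(\eta_0,A_1,d,c_{\min},c_{\max},\|g\|_{\alphawedgeone},{\alphawedgeone})$ to be fixed below. This yields, for each $i\le J$,
\[
|\localEV_i-\localPEV_i|<\eta\localEV_i,\quad \|\localEF_i-\localPEF_i\|_{L^\infty(B)}\lesssim \eta Q_1(\localPEV_i),\quad \|\grad(\localEF_i-\localPEF_i)\|_{L^\infty(B)}\lesssim \eta Q_2(\localPEV_i).
\]
For each index $i$ common to both truncations, telescope
\[
\localEF_i(x)\localEF_i(y)e^{-\localEV_i t}-\localPEF_i(x)\localPEF_i(y)e^{-\localPEV_i t}
= (\localEF_i-\localPEF_i)(x)\,\localEF_i(y)\,e^{-\localEV_i t}
+\localPEF_i(x)(\localEF_i-\localPEF_i)(y)\,e^{-\localEV_i t}
+\localPEF_i(x)\localPEF_i(y)\bigl(e^{-\localEV_i t}-e^{-\localPEV_i t}\bigr),
\]
and bound each summand using the three displays above, the $L^\infty$ eigenfunction bound $\|\localPEF_i\|_\infty\lesssim(\localPEV_i R^2)^\efk R^{-d/2}$ of Lemma \ref{lemma-ef-bound}, and the elementary $|e^{-a}-e^{-b}|\le|a-b|e^{-\min(a,b)}$ applied with $a=\localEV_i t$, $b=\localPEV_i t$, so that $|a-b|\le\eta A_1$. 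This produces a per-term error $\lesssim \eta\, P(A_1)\, t^{-d/2}$ for a polynomial $P$. Indices in the narrow boundary strip around $A_1/t$ that belong to only one of the two sums are handled either by shifting the two cutoffs symmetrically by an $O(\eta A_1/t)$ amount, or by observing that each such term has size $\lesssim_{A_1} t^{-d/2} e^{-A_1(1-\eta)}$ and is absorbed into the error term.

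Summing over $\lesssim_{A_1}1$ surviving indices, the total additive error is $\lesssim \eta\, P'(A_1)\, t^{-d/2}$, which is $\le \eta_0 t^{-d/2}$ provided $\eta$ is chosen small enough depending on $\eta_0$ and $A_1$. To upgrade this additive closeness to the multiplicative statement \eqref{l:lh-2}, combine Proposition \ref{p:non-smooth-kernel_estimates} (applied to the Euclidean Dirichlet kernel $K^B$) and Lemma \ref{l:truncated_kernel}: together they show that the Euclidean truncated sum is $\sim_{C_1,C_2} t^{-d/2}$, hence so is the metric one, and their ratio lies in $[1-C\eta_0,\,1+C\eta_0]$, yielding the stated $\sim$ with constants tending to $1$ as $\eta_0\to 0$. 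For the gradient statement \eqref{l:grad-lh-3}, the same telescoping goes through with $\grad\localEF_i(y)$ and $\grad\localPEF_i(y)$ in place of $\localEF_i(y)$ and $\localPEF_i(y)$; the gradient $L^\infty$ bound of Lemma \ref{lemma-ef-bound} and the bound on $\|\grad(\localEF_i-\localPEF_i)\|_\infty$ from Lemma \ref{lemma-4} supply all the needed factors. The target is the additive error $\eta_0(R/t)t^{-d/2}$, which is again achieved by taking $\eta$ small enough.

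The main obstacle is the uniform control of the polynomial prefactors: the polynomials $Q_1,Q_2$ in Lemma \ref{lemma-4} have degree growing with $\efk\sim d$, and the eigenfunction $L^\infty$ bounds likewise produce $\efk$-dependent factors, all evaluated at eigenvalues up to $A_1/t$, so the per-term error carries a polynomial dependence on $A_1$; summing over $\sim A_1^{d/2}$ terms amplifies this further. One must therefore select $\eta$ (and hence the metric closeness $\epsilon_0$ through Lemma \ref{lemma-4}) as an explicit decreasing function of $A_1, \eta_0, d, c_{\min}, c_{\max}, \|g\|_{\alphawedgeone}, {\alphawedgeone}$. A secondary bookkeeping point is the handling of the $O(\eta A_1/t)$-wide strip around the cutoff $A_1/t$ where the two index sets disagree, which is dispatched either by a symmetric cutoff shift or by the exponentially small factor $e^{-A_1(1-\eta)}$.
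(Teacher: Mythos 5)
Your proposal is correct and follows essentially the same route as the paper's own proof: apply Lemma \ref{lemma-4} with $J\lesssim A_1^{d/2}$ and a small perturbation parameter $\eta$, telescope the per-index difference using Lemma \ref{lemma-4} together with the $L^\infty$ and gradient bounds of Lemma \ref{lemma-ef-bound}, count the surviving indices by Weyl's Lemma, and obtain the multiplicative form of \eqref{l:lh-2} by comparing against the Euclidean truncated sum $\sim t^{-d/2}$ (Proposition \ref{p:non-smooth-kernel_estimates} and the estimates inside the proof of Lemma \ref{l:truncated_kernel}). The only point at which you go beyond the paper's write-up is the explicit treatment of the cutoff strip where the index sets $\{i:\localEV_i\le A_1/t\}$ and $\{i:\localPEV_i\le A_1/t\}$ may disagree; the paper telescopes per fixed index without dwelling on this, implicitly relying on $|\localEV_i-\localPEV_i|<\eta\localEV_i$ to keep the mismatch small. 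Your handling (shift the cutoffs symmetrically, or absorb strip terms via the factor $e^{-A_1/(1+\eta)}$ against the total $\sim t^{-d/2}$) is a correct and slightly more careful account of the same bookkeeping.
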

\begin{proof}
We apply Lemma \ref{lemma-4} with $J=\#\{j:\localEV_i\le A_1/t\}\le \left(\frac{A_1}t\right)^{\frac d2} R^d\sim A_1^\frac d2$ and with $\eta<\eta_0$.
Let $\epsilon_0$ be as guaranteed by Lemma \ref{lemma-4}.
Since  $\localEF_i$'s and $\localPEF_i$'s are $L^2$-normalized, Lemma \ref{lemma-ef-bound} and \ref{lemma-4} implies
for $\localEV_i \leq \frac{A_1}t$
\begin{equation*}
\begin{aligned}
&|\localEF_i(x)\localEF_i(y)e^{-\localEV_i t} - \localPEF_i(x)\localPEF_i(y)e^{-\localPEV_i t}|\\
&\leq
|\localEF_i(x)-\localPEF_i(x)||\localEF_i(y)|e^{-\localEV_i t} +
|\localEF_i(y)-\localPEF_i(y)||\localEF_i(x)|e^{-\localEV_i t} +
|\localEF_i(x)||\localEF_i(y)||e^{-\localEV_i t}- e^{-\localPEV_i t}|\\
&\lesssim Q_1(A_1 t^{-1}R^2)
\eta  \left( |\localEF_i(y)|e^{-\localEV_i t} + |\localEF_i(x)|e^{-\localEV_i t}\right) + |\localEF_i(x)||\localEF_i(y)|t\eta\localEV_i e^{-\localEV_i t}\\
&\lesssim A_1^{3\efk+1} \eta
\end{aligned}
\end{equation*}

Using Weyl's Lemma (Lemma \ref{l:weyls-lemma}) for the ball with Dirichlet boundary conditions (see Lemma \ref{l:weyls-lemma}), we have
\begin{equation*}
\begin{aligned}
\left\|\sum_{\localEV_i \leq \frac{A_1}t} \localEF_i(x)\localEF_i(y)e^{-\localEV_i t} -
\sum_{\localPEV_i \leq \frac{A_1}t} \localPEF_i(x)\localPEF_i(y)e^{-\localPEV_i t}\right\|
&\lesssim A_1^{3\efk}(1+A_1) \eta J\\
& \lesssim A_1^{3\efk+1} \eta \sum_{\localEV_i \leq \frac{A_1}t} \localEF_i(x)\localEF_i(y)e^{-\localEV_i t}\\
\end{aligned}
\end{equation*}
by the (Euclidean) estimates in the proof of Lemma \ref{l:truncated_kernel} and since $R\lesssim 1$.
We obtain the desired estimate \eqref{l:lh-2} by taking $\eta_0$ sufficiently small.
Similarly,
\begin{equation*}
\begin{aligned}
&|\localEF_i(x)\grad\localEF_i(y)e^{-\localEV_i t} - \localPEF_i(x)\grad\localPEF_i(y)e^{-\localPEV_i t}|\\
&\leq
|\localEF_i(x)-\localPEF_i(x)||\grad\localEF_i(y)|e^{-\localEV_i t} +
|\grad\localEF_i(y)-\grad\localPEF_i(y)||\localEF_i(x)|e^{-\localEV_i t} +
|\localEF_i(x)||\grad\localEF_i(y)||e^{-\localEV_i t}- e^{-\localPEV_i t}|\\
&\lesssim
\eta  \left( (A_1 t^{-1}R^2)^{\efk}|\grad\localEF_i(y)|e^{-\localEV_i t} + (A_1 t^{-1}R^2)^{\efk+1} r^{-1}|\localEF_i(x)|e^{-\localEV_i t}\right) + |\localEF_i(x)||\grad\localEF_i(y)|t\eta\localEV_i e^{-\localEV_i t}\\
&\lesssim A_1^{3\efk+2} \eta R^{-1}\,.
\end{aligned}
\end{equation*}
Thus, equation \eqref{l:grad-lh-3} also clearly follows by $\eta_0$ sufficiently small.
\end{proof}

\begin{lemma}
\label{1-7-07}
Let $\eta_0>0$ be given and assumed to be sufficiently small.
Assume $\epsilon_0$ is sufficiently small (depending on $\eta_0$, as well as the usual   $d$, $c_{\min}$, $c_{\max}$, $||g||_{\alphawedgeone}$, ${\alphawedgeone}$), and
$|g^{ik}(x)-\delta^{ik}|<\epsilon_0$.
For $y\in B_{\frac{R}2}(x)\subset \Omega$ with $ |x-y|^2\lesssim t\sim R^2\leq1$ (in a similar fashion to Assumption \assumptionA) and $s\leq t$,
\begin{gather}\label{large-tilde-K}
\localPK^B_t(x,y)
\sim_{\eta_0,d,c_{\min},c_{\max},||g||_{\alphawedgeone},{\alphawedgeone}} %
K^B_t(x,y)\,,
\end{gather}
\begin{gather}\label{s_large-tilde-K}
\localPK^B_s(x,y)
\lesssim_{\eta_0,d,c_{\min},c_{\max},||g||_{\alphawedgeone},{\alphawedgeone}} %
K^B_s(x,y)\,,
\end{gather}
and
\begin{gather}\label{s-large-grad-tilde-K2}
\|\grad \localPK^B_s(x,y)\|
\lesssim_{\eta_0,d,c_{\min},c_{\max},||g||_{\alphawedgeone},{\alphawedgeone}}
\frac{R}{s}(sR^{-2})^{-2\efk-1}s^\frac{-d}2\,.
\end{gather}
If in addition we have $|x-y|^2\sim t$ then
\begin{equation}\label{e:ball-kernels-close}
\left\|\grad \localPK^B_t(x,y) - \grad K^B_t(x,y)\right\|
\lesssim_{d,c_{\min},c_{\max},||g||_{\alphawedgeone},{\alphawedgeone}}
\ \ \eta_0 \cdot\frac{R}{t}t^\frac {-d}2\,.
\end{equation}
The constants in \eqref{large-tilde-K} 
go to $1$ as $\eta_0\to 0$.
\end{lemma}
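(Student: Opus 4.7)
I would proceed spectrally: insert the Dirichlet eigenfunction expansions $K^B_s(x,y)=\sum_i\localEF_i(x)\localEF_i(y)e^{-\localEV_i s}$ and $\localPK^B_s(x,y)=\sum_i\localPEF_i(x)\localPEF_i(y)e^{-\localPEV_i s}$, split each at a frequency cutoff $A_1/t$ with $A_1=A_1(\eta_0)$ chosen large first, compare the low-frequency pieces via Lemmas \ref{large-head} and \ref{lemma-4}, and absorb the high-frequency tails using the Cauchy--Schwartz device from the proof of Lemma \ref{l:truncated_kernel}. The perturbation parameter $\epsilon_0$ is fixed last, small enough to apply Lemma \ref{lemma-4} at the truncation level $A_1$.

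For the low-frequency pieces ($\localEV_i,\localPEV_i\le A_1/t$), Lemma \ref{large-head} furnishes \eqref{large-tilde-K} at once and supplies the main ingredient for \eqref{e:ball-kernels-close} through its gradient form \eqref{l:grad-lh-3}. For a general time $s\le t$, the same term-by-term perturbation argument via Lemma \ref{lemma-4} applies, since the bound $\localPEV_i s\le A_1$ keeps every polynomial factor $(\localPEV_i R^2)^{\efk}$ from Lemma \ref{lemma-ef-bound} bounded. The high-frequency tail of $\localPK^B_s$ is bounded by Cauchy--Schwartz as $e^{-A_1/4}\localPK^B_{s/2}(x,x)^{1/2}\localPK^B_{s/2}(y,y)^{1/2}\lesssim e^{-A_1/4}s^{-d/2}$, using the on-diagonal Aronson estimate from \eqref{e:heatkernelboundstaut}; the analogous bound holds for $K^B_s$. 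Choosing $A_1$ large makes these tails negligible, yielding \eqref{large-tilde-K}. The upper bound \eqref{s_large-tilde-K} then follows by combining this with the two-sided Aronson Gaussian bounds for $\localPK^B_s$ and $K^B_s$, whose constants are arbitrarily close because $\epsilon_0$ is arbitrarily small.

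The gradient bound \eqref{s-large-grad-tilde-K2} is obtained by differentiating the spectral expansion and applying Cauchy--Schwartz in the form $\|\grad\localPK^B_s(x,y)\|\le\localPK^B_s(x,x)^{1/2}\bigl(\sum_i\|\grad\localPEF_i(y)\|^2e^{-\localPEV_i s}\bigr)^{1/2}$. Inserting the $L^\infty$ gradient estimate $\|\grad\localPEF_i\|_\infty^2\lesssim\localPEV_i^{2\efk+2}R^{4\efk+2-d}$ from Lemma \ref{lemma-ef-bound}, absorbing the polynomial via $\localPEV_i^{2\efk+2}e^{-\localPEV_i s}\lesssim s^{-(2\efk+2)}e^{-\localPEV_i s/2}$, and estimating the remaining spectral sum by the trace $\sum_ie^{-\localPEV_i s/2}=\int_B\localPK^B_{s/2}(w,w)\,dw\lesssim R^d s^{-d/2}$ gives the required rate (with some slack $(s/R^2)^{\efk+1}\le 1$ in the regime $s\le t\sim R^2$). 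Finally \eqref{e:ball-kernels-close} follows from the same low/high split applied to $\grad K^B_t-\grad\localPK^B_t$: the low-frequency difference is $\lesssim\eta_0\,Rt^{-1-d/2}$ by \eqref{l:grad-lh-3}, and the high-frequency gradient tails of both $\grad K^B_t$ and $\grad\localPK^B_t$ are controlled by the preceding Cauchy--Schwartz argument with $s=t$, made negligible by large $A_1$. The main technical obstacle is book-keeping the polynomial-in-$\localPEV_i$ factors from Lemma \ref{lemma-ef-bound}; the Cauchy--Schwartz/trace device is precisely what is needed to convert those pointwise bounds into summable ones.
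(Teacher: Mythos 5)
Your overall spectral-split strategy matches the paper's, and all four conclusions come out correctly. The genuine difference is in how you bound the gradients (\eqref{s-large-grad-tilde-K2} and the tail in \eqref{e:ball-kernels-close}). The paper handles the high-frequency gradient tail via a Green-function duality, writing
\begin{equation*}
\grad_x\sum_{\localPEV_i\ge A_1/s}\localPEF_i(x)\localPEF_i(y)e^{-\localPEV_i s}
=\int_{\justB}\grad_x\tilde G^\justB(x,w)\sum_{\localPEV_i\ge A_1/s}\localPEV_i\localPEF_i(w)\localPEF_i(y)e^{-\localPEV_i s}\,dw
\end{equation*}
and using $\|\grad_x\tilde G^\justB(x,\cdot)\|_{L^1(\justB)}\lesssim R$, while it handles the low-frequency head for \eqref{s-large-grad-tilde-K2} by summing the $L^\infty$ bounds of Lemma \ref{lemma-ef-bound} and invoking Weyl's counting lemma for the ball. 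You instead bound the entire sum in one step via Cauchy--Schwartz in $i$,
\begin{equation*}
\|\grad\localPK^B_s(x,y)\|\le\localPK^B_s(x,x)^{1/2}\Bigl(\sum_i\|\grad\localPEF_i\|_\infty^2e^{-\localPEV_i s}\Bigr)^{1/2},
\end{equation*}
absorbing the polynomial growth into $e^{-\localPEV_i s/2}$ and reducing to the trace $\int_B\localPK^B_{s/2}(w,w)\,dw\lesssim R^ds^{-d/2}$. I checked the exponents: you arrive at $R^{2\efk+1}s^{-(\efk+1)}s^{-d/2}$, which is the claimed rate times $(s/R^2)^{\efk+1}\le1$, so your argument actually proves a slightly sharper estimate. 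Both routes ultimately rest on the Gr\"uter--Widman estimates (the paper directly on $\|\grad\tilde G^\justB\|_{L^1}$, you indirectly through Lemma \ref{lemma-ef-bound}, whose proof also uses $\grad\tilde G^\justB$), so neither avoids the Green function entirely; yours avoids the separate Weyl-count step and unifies head and tail. For \eqref{s_large-tilde-K} you, like the paper, lean on the Aronson-type bounds; that step is brief in the paper too and your treatment is no less precise.
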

\begin{proof}
We estimate the tail :
\begin{equation*}
\begin{aligned}
\left\|\sum_{\localEV_i \geq \frac{A_1}t} \localEF_i(x)\localEF_i(y)e^{-\localEV_i t}\right\|
&\le
e^{-\frac12 A_1}
\left\|\sum_{\localEV_i \geq \frac{A_1}t} \localEF_i(x)\localEF_i(y)e^{-\frac12\localEV_i t}\right\| \\
&\le
e^{-\frac12 A_1} \localPK^B_{\frac14t}(x,x)\localPK^B_{\frac14t}(y,y)
\underbrace{\lesssim}_{{\rm using\  }\cite{Davies:SpectraPropertiesChangesMetric}}
e^{-\frac12 A_1} t^{-\frac{d}2}\,.
\end{aligned}
\end{equation*}
This, combined with  \eqref{l:lh-2}, for $A_1$ large enough, gives \eqref{large-tilde-K}. From \cite{Davies:SpectraPropertiesChangesMetric} we also get  \eqref{s_large-tilde-K}.
We also have
\begin{eqnarray*}
\left\|\grad_x
    \sum_{\localPEV \geq \frac{A_1}s} \localPEF_i(x)\localPEF_i(y)e^{-\localPEV_i s} \right\|
&=&
\left\|\int_{\justB} \grad_x \tilde{G}^\justB(x,w)
    \sum_{\localPEV \geq \frac{A_1}s} \tilde\Delta^\justB \localPEF_i(w)\localPEF_i(y)
                e^{-\localPEV_i s} \right\|\\
&=&
\left\|\int_{\justB} \grad_x \tilde{G}^\justB(x,w)
    \sum_{\localPEV \geq \frac{A_1}s} \localPEV_i\localPEF_i(w)\localPEF_i(y)
                e^{-\localPEV_i s} \right\|\\
&=&
s^{-1} \left\|\int_{\justB} \grad_x \tilde{G}^\justB(x,w)
    \sum_{\localPEV \geq \frac{A_1}s} \localPEF_i(w)\localPEF_i(y)
                (\localPEV_ise^{-\localPEV_i s}) \right\|\\
&\lesssim&
s^{-1} \int_{\justB} ||\grad_x \tilde{G}^\justB(x,w)||
    \sum_{\localPEV \geq \frac{A_1}s} |\localPEF_i(w)| |\localPEF_i(y)|
                e^{-\frac12 \localPEV_i s} \\
&\lesssim&
e^{-\frac14 A_1} s^{-1} \int_{\justB} ||\grad_x \tilde{G}^\justB(x,w)||
        \localPK^B_{s/8}(w,w)^\frac12\localPK^B_{s/8}(y,y)^\frac12\\
&\lesssim&
e^{-\frac14 A_1} s^{-\frac{d}2 -1} \int_{\justB} ||\grad_x \tilde{G}^\justB(x,w)||\lesssim e^{-\frac14 A_1} s^{-\frac{d}2 -1} R\,,
\end{eqnarray*}
since by \eqref{e:annulusintgradG} we have $||\grad \tilde{G}^\justB(x,\cdot)||_{L^1(B_\Rrescb)} \lesssim R$.
If we now take $s=t$ then, by the Euclidean estimates and \eqref{l:grad-lh-3}, for $A_1$ large enough, we obtain both the lower and upper bounds
\eqref{e:ball-kernels-close}.

To prove estimate \eqref{s-large-grad-tilde-K2}, we use the above estimate and notice that we also have (from Lemma \ref{lemma-ef-bound} and Weyl's Lemma for the ball
with Dirichlet boundary conditions)
\begin{equation*}
\begin{aligned}
\left\|\sum_{\localPEV \leq \frac{A_1}s} \grad \localPEF_i(x)\localPEF_i(y)e^{-\localPEV_i s}\right\|
&\lesssim
R^{-d}\sum_{\localPEV \leq \frac{A_1}s} \localPEV_i R(\localPEV_i R^2)^{2\efk+1}e^{-\localPEV_i s}\\
&=
R^{-d}(sR^{-2})^{-2\efk-2}R^{-1}\sum_{\localPEV \leq \frac{A_1}s} (\localPEV_i s)^{2\efk+2}e^{-\localPEV_i s}\\
&\lesssim
R^{-d}(sR^{-2})^{-2\efk-2}R^{-1}\sum_{\localPEV \leq \frac{A_1}s}1 \\
&\lesssim
A_1^\frac{d}2 R^{-1}(sR^{-2})^{-2\efk-2}s^\frac{-d}2\\
&\lesssim
\frac{R}s (sR^{-2})^{-2\efk-1}s^\frac{-d}2\,.
\end{aligned}
\end{equation*}
\end{proof}

Lemma \ref{1-7-07} will be used to get  Proposition \ref{p:non-smooth-kernel_estimates} for the case of a manifold.
We will  need to improve estimate \eqref{s-large-grad-tilde-K2}, which in turn requires the following:
\begin{lemma}
Let $|y|<\frac{R}4$, $r<\frac{R}4$ and $s^\frac12 \leq r$.
Let $B^y(s')$ be Brownian motion started at $y$.  Then
$$P(\sup\limits_{0\leq s'\leq s} |B^y(s')|>|y|+r)\lesssim_{d,c_{\min},c_{\max}} e^{-c'(d,c_{\min},c_{\max}) \frac{r}{\sqrt{s}}}\,.$$
\label{exp-decay-time}
\end{lemma}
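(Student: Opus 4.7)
The plan is to reduce this estimate to the already-proved exit-time bound of Lemma \ref{Ptauleqt}, losing only the stronger Gaussian decay $e^{-cr^2/s}$ for the weaker exponential decay $e^{-cr/\sqrt{s}}$ claimed here.

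First I would replace the event in question by a more convenient one via the triangle inequality: for every sample path $\omega$ and time $s'$,
\begin{equation*}
|B^y_\omega(s')| \le |y| + |B^y_\omega(s')-y|,
\end{equation*}
so $\{\sup_{0\le s'\le s}|B^y_\omega(s')| > |y|+r\}\subseteq\{\sup_{0\le s'\le s}|B^y_\omega(s')-y| > r\}$. The latter event is precisely $\{\tau_{\ball(y,r)}\le s\}$, where $\tau_{\ball(y,r)}$ denotes the first exit time of $B^y_\omega$ from the ball of radius $r$ centered at its starting point $y$.

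Next I would verify that Lemma \ref{Ptauleqt} applies to this ball. Since $|y|<R/4$ and $r<R/4$, the ball $\ball(y,r)$ is contained well inside $\ball(0,R)$, so the uniform ellipticity constants $c_{\min},c_{\max}$ of $g$ on the larger domain control the coefficients governing $B^y$ on $\ball(y,r)$. Applying Lemma \ref{Ptauleqt} with radius $r$ yields
\begin{equation*}
P\bigl(\tau_{\ball(y,r)}\le s\bigr)\ \lesssim_{d,c_{\min},c_{\max}}\ \exp\!\left\{-\frac{r^{2}}{2Ms}\right\}
\end{equation*}
for the constant $M=M(d,c_{\min},c_{\max})$ provided by that lemma.

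Finally, the hypothesis $\sqrt{s}\le r$ gives $r/\sqrt{s}\ge 1$, so $r^{2}/s=(r/\sqrt{s})^{2}\ge r/\sqrt{s}$, and therefore
\begin{equation*}
\exp\!\left\{-\frac{r^{2}}{2Ms}\right\}\ \le\ \exp\!\left\{-\frac{1}{2M}\cdot\frac{r}{\sqrt{s}}\right\}.
\end{equation*}
Setting $c'=(2M)^{-1}$ and combining with the inclusion of events above delivers the stated bound. There is no real obstacle here; the lemma is a bookkeeping corollary of Lemma \ref{Ptauleqt}, the only substantive observation being that the hypothesis $\sqrt{s}\le r$ is exactly what lets us downgrade the Gaussian tail to an exponential tail of the form required in subsequent applications.
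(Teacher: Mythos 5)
Your proposal is correct and takes the same route as the paper: the paper's proof is literally the one line ``This follows from Lemma~\ref{Ptauleqt},'' and your argument — the triangle-inequality inclusion of events, application of Lemma~\ref{Ptauleqt} with radius~$r$, then the downgrade $\exp\{-r^{2}/(2Ms)\}\le\exp\{-r/(2M\sqrt{s})\}$ using $\sqrt{s}\le r$ — is exactly the bookkeeping that line suppresses.
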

\begin{proof}
This follows from Lemma \ref{Ptauleqt}.
\end{proof}

\begin{proof}[Proof of Proposition \ref{p:non-smooth-kernel_estimates}; case of  $\localTangentBall$ with metric at least $\mathcal{C}^2$]
By rescaling we may assume  that $R\leq 1$.
%
We upper bound $\delta_0$ so that $|g^{ik}(x)-\delta^{ik}|<\epsilon_0$ where $\epsilon_0$ is as prescribed by Lemma \ref{1-7-07} (this is done as in \eqref{e:epsilon_0-is-used}).

Estimates \eqref{e:kernel_estimates} and the first part of  \eqref{e:kernel_estimates_s} follow  from the Euclidean case and estimates \eqref{large-tilde-K} and \eqref{s_large-tilde-K}.
Estimate \eqref{e:gradient_kernel_estimates} and
estimate \eqref{e:gradient_kernel_estimates_q} follow from \eqref{e:ball-kernels-close} and Euclidean ball estimates.

We now turn to the second and third parts in   \eqref{e:kernel_estimates_s}.
Without loss of generality we identify $z=0$.
Let $a$ be such that $a\sum\limits_{j=1}^\infty\frac1{j^2}=\frac14$.
Define stopping times $\tau_1,\tau_2,...$ by
$$\tau_n=\inf\{s':|B^{z}(s')|=aR\sum\limits_{j=1}^n\frac1{j^2}\}\,.$$

For $n>1$, define the set of paths
\begin{equation*}
B_n=\{\omega \in {\bf \Omega}: \tau_n(\omega)\leq (1-2^{-n})s \}\,.
\end{equation*}
For $n>1$ define $G_n\subset B_n$ as
\begin{equation*}
G_n=B_n\smallsetminus B_{n-1}
\end{equation*}
and
\begin{equation*}
G_1=B_1
\end{equation*}
We estimate using Lemma \ref{exp-decay-time}:
\begin{equation*}
P(G_1)\le
\exp\left(-c' \frac{a}{1^2} Rs^{-\frac12}\right)\\
\end{equation*}
and for $n>1$
\begin{equation*}
P(G_n)
\leq  \exp\left(-c'  \frac{a2^\frac{n-1}2}{n^2} Rs^{-\frac12}\right)\,.
\end{equation*}
We need another lemma:
\begin{lemma}
The set $\{\omega\in{\bf \Omega}:  \tau_n \leq s\ \forall (n\geq1),\quad \omega\notin \cup G_n\}$ has probability 0.
\end{lemma}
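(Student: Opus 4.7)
The plan is to show that the event in question is contained in the null event $\{B^z(s)\in\partial B_{R/4}(z)\}$, which has probability zero because $B^z(s)$ has a Lebesgue density.

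First, I would unpack the definitions. The hypothesis $\omega\notin\bigcup_n G_n$ together with $G_1=B_1$ forces $\omega\notin B_1$. Arguing by induction on $n$, if $\omega\notin B_{n-1}$, then $\omega\notin G_n=B_n\setminus B_{n-1}$ yields $\omega\notin B_n$. Hence $\omega\notin B_n$ for every $n\ge 1$, which by the definition of $B_n$ translates to $\tau_n(\omega)>(1-2^{-n})s$. Combined with the standing hypothesis $\tau_n(\omega)\le s$, this gives the monotone convergence $\tau_n(\omega)\uparrow s$ as $n\to\infty$.

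Next, by the very definition of $\tau_n$ we have $|B^z_\omega(\tau_n)|=aR\sum_{j=1}^n j^{-2}$, which converges to $R/4$ as $n\to\infty$ by the normalization of $a$. Since the Brownian path (i.e., the diffusion associated with the uniformly elliptic operator) is almost surely continuous, we may pass to the limit to obtain
\[
|B^z_\omega(s)|=\lim_{n\to\infty} |B^z_\omega(\tau_n)|=R/4,
\]
so the event in question is contained in $\{B^z(s)\in\partial B_{R/4}(z)\}$.

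Finally, the law of $B^z(s)$ is absolutely continuous with respect to Lebesgue measure: its density is precisely the heat kernel $K_s(z,\cdot)$, which satisfies the two-sided Gaussian bounds \eqref{e:heatkernelboundstaut}. Since $\partial B_{R/4}(z)$ has Lebesgue measure zero, we conclude
\[
P\bigl(B^z(s)\in\partial B_{R/4}(z)\bigr)=0,
\]
and the lemma follows. The only nonroutine input is this absolute-continuity statement for the one-time marginals of the diffusion, which is classical under our uniform-ellipticity hypothesis; everything else is a direct unpacking of the definitions and a use of path continuity.
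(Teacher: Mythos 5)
Your proof is correct, and it takes a genuinely different route from the paper's. Both arguments start the same way: you unpack the definitions (as the paper does, showing $\bigcup G_n=\bigcup B_n$, so the event forces $s\ge\tau_n\ge(1-2^{-n})s$, hence $\tau_n\uparrow s$). The paper then stays quantitative: it observes that on the event one has $\tau_n-\tau_{n-1}\lesssim 2^{-n}s$ for all $n>1$, and invokes Lemma~\ref{exp-decay-time} (the exit-time tail bound) to conclude that the probability of $\{\tau_n-\tau_{n-1}\le 2^{-n}s\}$ decays super-exponentially in $n$, so the intersection over all $n$ is null. You instead make a soft, measure-theoretic observation: path continuity combined with $\tau_n\uparrow s$ and $|B^z(\tau_n)|=aR\sum_{j\le n}j^{-2}\uparrow R/4$ forces $|B^z(s)|=R/4$, so the event is contained in $\{B^z(s)\in\partial B_{R/4}(z)\}$, which is null because the one-time marginal has a Lebesgue density (the heat kernel, via \eqref{e:heatkernelboundstaut}). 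Your argument is shorter and avoids the quantitative tail estimate entirely, at the cost of invoking absolute continuity of the marginal; the paper's version keeps to the machinery it has already built (the same Lemma~\ref{exp-decay-time} is reused immediately afterward to bound $P(H_n)$), and as a by-product produces a quantitative decay rate. Both are valid proofs of the lemma.
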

\begin{proof}
\begin{eqnarray*}
\{\omega\in{\bf \Omega}:  \tau_n \leq s\ \forall (n\geq1),\quad \omega\notin \cup G_n\}
&=&\{\omega\in{\bf \Omega}:  \tau_n \leq s\ \forall (n\geq1),\quad \omega\notin \cup B_n\}\\
&=&
\{\omega\in{\bf \Omega}:  s\geq \tau_{n}\geq (1-2^{-n})s \ \forall(n>1)\}\\
&\subset&
\{\omega\in{\bf \Omega}:  \tau_n-\tau_{n-1}\leq 2^{-n}s \ \forall(n>1)\}
\end{eqnarray*}
However the set $\{\omega\in{\bf \Omega}:  \tau_n-\tau_{n-1}\leq 2^{-n}s \}$
has probability decaying super-exponentially in $n$ by Lemma \ref{exp-decay-time}.
\end{proof}
We now continue with the proof of Proposition \ref{p:non-smooth-kernel_estimates}; case of
$\localTangentBall$ with metric at least $\mathcal{C}^2$.
Define $H_n=G_n\setminus (\cup_1^{n-1}G_i)$.
We now have a disjoint partition of $\{\omega\in {\bf \Omega}:\tau_n \leq s\ \forall (n\geq1)\}$ (up to measure $0$) by the collection $\{H_i\}$.
Set $\tilde K^D_s(\cdot,\cdot):=\localPK^\localTangentBall_s(\cdot,\cdot)$.
For $|y|> \frac{R}4$
 we have
\begin{equation*}
\localPK^D_s(z,y_0)=\sum\limits_{n=1}^\infty \E_\omega(\chi_{H_n} \localPK^D_{s-\tau_n}(B^{z}(\tau_n),y))
\end{equation*}
Taking gradient and using equation \eqref{s-large-grad-tilde-K2}  we get
\begin{equation*}
\begin{aligned}
|\grad_x\localPK^D_s(z,y)|
&=
|\sum\limits_{n=1}^\infty \E_\omega(\chi_{H_n} \grad_x\localPK^D_{s-\tau_n}(B^{z}(\tau_n),y))| \\
&\lesssim_{d,c_{\min},c_{\max},||g||_{\alphawedgeone},{\alphawedgeone}} %
\sum\limits_{n=1}^\infty \frac{R}s (2^{-n}sR^{-2})^{-2\efk-1}s^\frac{-d}2 P(H_n)\\
&\lesssim_{d,c_{\min},c_{\max},||g||_{\alphawedgeone},{\alphawedgeone}} %
\sum\limits_{n=1}^\infty \frac{R}s (2^{-n/2}s^\frac12R^{-1})^{-4\efk-2}s^\frac{-d}2  \exp\left(-c'  \frac{a2^{-1/2}}{n^2} 2^{n/2}s^{-\frac12}R\right)\\
&\lesssim_{d,c_{\min},c_{\max},||g||_{\alphawedgeone},{\alphawedgeone}} %
\sum\limits_{n=1}^\infty \frac{R}t(2^{-n/2}t^\frac12R^{-1})^{-4\efk-2}t^\frac{-d}2  \exp\left(-c'  \frac{a2^{-1/2}}{n^2} 2^{n/2}t^{-\frac12}R\right)\\
&\leq
\frac{R}t t^\frac{-d}2\sum\limits_{n=1}^\infty (2^{-n/2}t^\frac12R^{-1})^{-4\efk-2}  \exp\left(-c'  \frac{a2^{-1/2}}{n^2} 2^{n/2}t^{-\frac12}R\right)\\
&\lesssim_{d,c_{\min},c_{\max},||g||_{\alphawedgeone},{\alphawedgeone}} %
\frac{R}t t^\frac{-d}2
\end{aligned}
\end{equation*}
where we may replace $s$ with $t$ above, since $s\leq t$, and each of the summands is increasing in $s$  as long as it is sufficiently small with respect to $R^2$ (independently of $n$ when $n>1$).
This proves the second and third parts of \eqref{e:kernel_estimates_s} for $\localTangentBall$ with metric at least $\mathcal{C}^2$.
\end{proof}

\begin{remark}
The proof below makes no assumption on the volume of $\mathcal{M}$, and works for the case of $\mathcal{M}$ having infinite volume as well.
\end{remark}
\begin{proof}[Proof of Proposition \ref{p:non-smooth-kernel_estimates} for the heat kernel of $\mathcal{M}$, with metric at least $\mathcal{C}^2$]
As for the Neumann heat kernel, the starting point is Proposition \ref{p:LocalToGlobalHeatKernel}, which allows us to localize.
We use Proposition \ref{p:non-smooth-kernel_estimates} for the ball $B_{2\delta_0R_z}(z)$  with metric at least $\mathcal{C}^2$.
For this proof, we denote by  $C_2[B]$ be the $C_2$ constant for the Dirichlet ball case, and
set $ K^D_s(\cdot,\cdot):=K^{2\delta_0R_z}_s(\cdot,\cdot)$,
the heat kernel for the ball $\ball(z,2\delta_0R_z)$ with Dirichlet boundary conditions.
For $s\le t$,
\begin{equation*}
\begin{aligned}
|K_s(x,y)- \tilde K_s^D(x,y)|&=
\left|\sum_{n=1}^{+\infty}\mathbb{E}_\omega\left[ \tilde K_{s-\tau_n}^D(x_n(\omega),y)|\tau_n<s\right]P_{\omega}(\tau_n(\omega)<s)\right|\\
&\lesssim_{C_2[B]}
\sum_{n=1}^\infty t^{-\frac d2} \underbrace{e^{-n\frac{\left(\frac{\delta_0R_z}2\right)^2}{Ms}}}_{\mathrm{eqn.\ } \eqref{e:taunexpbound}}\\
& \lesssim_{C_2[B],\delta_0,\delta_1} t^{-\frac d2} e^{-\frac{\left(\frac{\delta_0R_z}2\right)^2}{Ms}}
\end{aligned}
\end{equation*}
This proves \eqref{e:kernel_estimates} and the first part of \eqref{e:kernel_estimates_s} (see Remark \ref{r:expsmall}).
For the gradient estimates, i.e.  \eqref{e:gradient_kernel_estimates}, \eqref{e:gradient_kernel_estimates_q},  and the second and third part of \eqref{e:kernel_estimates_s},
\begin{equation*}
\begin{aligned}
\|\grad_y K_s(x,y)- \grad_y  \tilde K_s^D(x,y)\|
&\le \sum_{n=1}^{+\infty}\left\|\grad_y \mathbb{E}_\omega\left[ \tilde K_{s-\tau_n}^D(x_n(\omega),y)|\tau_n<s\right]\right\|P_{\omega}(\tau_n(\omega)<s)\\
&\lesssim_{C_2'[B]}
\sum_{n=1}^\infty t^{-\frac d2}\frac{\delta_0R_z}t \underbrace{e^{-n\frac{\left(\frac{\delta_0R_z}2\right)^2}{Ms}}}_{\mathrm{eqn.\ } \eqref{e:taunexpbound}}\\
& \lesssim_{C_2'[B],\delta_0,\delta_1}  t^{-\frac d2}\frac{\delta_0R_z}t e^{-\frac{\left(\frac{\delta_0R_z}2\right)^2}{Ms}}
\end{aligned}
\end{equation*}
giving us $C_9$.
By Remark \ref{r:expsmall} the exponential term from equation \eqref{e:taunexpbound} can be made small enough so that we obtain estimate \eqref{e:gradient_kernel_estimates}
as well as the second and third parts of \eqref{e:kernel_estimates_s}.
\end{proof}

\section{The proof of Theorem \ref{t:heatkernelmapping}}
We remind the reader of Remark \ref{r:inf-vol} which notes that the proof
 of Proposition \ref{p:non-smooth-kernel_estimates} for the heat kernel of $\mathcal{M}$, with metric at least $\mathcal{C}^2$ (appearing  at the end of section \ref{s:HeatKernelEstimatesManifold}),
 made no assumptions on the finiteness of the volume of $\cM$ and the existence of $\Cweyl$.

 \subsection{The case $g\in\mathcal{C}^2$}

We  appropriately choose heat kernels $\{K_t(z,y_i)\}_{i=1,\dots,d}$, with $t\sim R_z^2$,
that provide a local coordinate chart with the properties claimed in the Theorem \ref{t:heatkernelmapping}:

\begin{proof}[Proof of Theorem \ref{t:heatkernelmapping}  for $g\in\mathcal{C}^2$]
Without loss of generality we may assume $\rho=R_z=1$, and thus, by Remark \ref{r:inf-vol}, we may apply Proposition \ref{p:non-smooth-kernel_estimates}.
Let us consider the Jacobian $\tilde J(x)$, for $x\in\ball_{c_1R_z}(z)$, of the map
\begin{equation*}
\tilde \Phi := R_z^{-d} t^{d/2}(t/R_z^2) \Phi\,.
\end{equation*}
By 
\eqref{e:gradient_kernel_estimates_q} we have $|\tilde J_{ij}(x)-C_2'\langle p_i,\frac{x-y_j}{||x-y_j||}\rangle R_z^{-1}|\le C_9 R_z^{-1}$.
As dictated by Proposition \ref{p:non-smooth-kernel_estimates},
by choosing $\delta_0,\delta_1$ appropriately (and, correspondingly, $c_1$ and $c_6$), we can make the constant $C_9$ smaller than any chosen $\epsilon$,
for all entries, and for all $x$ at distance no greater than $c_1R_z$
from $z$, where we use $t=t_z=c_6R_z^2$ for $\tilde \Phi$.
Therefore for $c_1$ small enough compared to $c_4$ we can write $R_z \tilde J(x)=G_d+E(x)$ where $G_d$ is the Gramian matrix $\langle p_i,p_j\rangle$ (indepedent of $x$!),
and $|E_{ij}(x)|<\epsilon$, for $x\in\ball_{c_1R_z}(z)$.
This implies that $R_z^{-1}(\sigma_{\min}-C_d\epsilon)||v||\le ||\tilde J(x)v||\le R_z^{-1}(\sigma_{\max}+C_d\epsilon)||v||$, with $C_d$ depending linearly on $d$,
where $\sigma_{\max}$ and $\sigma_{\min}$ are the largest and, respectively, smallest eigenvalues of $G_d$.
At this point we choose $\epsilon$ small enough, so that the above bounds imply that the Jacobian is essentially constant in $\ball_{c_1R_z}(z)$, and
by integrating along a path from $x_1$ to $x_2$ in $\ball_{c_1R_z}(z)$, we obtain
the Theorem  ($\Phi$ and $\tilde \Phi$ differ only by scalar multiplication).
We note that  $\epsilon\sim\frac1d$ suffices.
\end{proof}

We discuss the proof for $g\in\mathcal{C}^\alpha$, and $\mathcal{M}$ has possibly infinite volume in Section \ref{s:infinitevolume}.
Such proof is based on approximation arguments via heat kernels corresponding to smooth metrics on finite volume submanifolds.

\subsection{The case $g\in\mathcal{C}^\alpha$}
\label{s:infinitevolume}

In this section we discuss heat kernel estimates and the heat kernel triangulation Theorem in the case when $\mathcal{C}^\alpha$.
The key ingredient for the proof of Theorem \ref{t:heatkernelmapping} for the case of $g\in\mathcal{C}^\alpha$, are the heat kernel estimates similar to those of  Proposition \ref{p:non-smooth-kernel_estimates}.

Before
we  turn to the proof
of Theorem \ref{t:heatkernelmapping} for the case $g\in\mathcal{C}^\alpha$,
we need one more statement about the case $g\in\mathcal{C}^2$.
Consider the following   variant of Proposition \ref{p:LocalToGlobalHeatKernel}.

\begin{proposition}[Variant of Proposition \ref{p:LocalToGlobalHeatKernel}]
Assume $g\in\mathcal{C}^2$.
Let $w\in\Omega$ and $R_w\le\mathrm{dist}(w,\partial\Omega)$, or $w\in\mathcal{M}$ and $R_w\le \injRad(w)$.
Let $z$ and $R_z$ be similarly defined.
Assume 
$z\notin\ball_{ R_w}(w)$,
and
$w\notin\ball_{R_z}(z)$.
For each path $B_\omega^z$ (starting at $z$), we define $\tau_1(\omega)\le \tau_2(\omega)\le\dots$ as follows.
Let $\tau_1(\omega)$ be the first time that $B_\omega^z$ enters $\ball(w,\frac34 R_w)$
(if this does not happen, let $\tau_1(\omega)=+\infty$). Let $z_1=B_\omega^z(\tau_1)$.
By induction, for $n>1$ let $\tau_n(\omega)$ be the first time after $\tau_{n-1}(\omega)$ that $B_\omega^z$ re-enters $\ball(w,\frac34 R_w)$ after having exited $\ball(w,\frac12 R_w)$,
or $+\infty$ otherwise. Let $z_n(\omega)=B_\omega^z(\tau_n)$. If $\tau_n(\omega)=+\infty$, let $\tau_{n+k}(\omega)=+\infty$ for all $k\ge0$.
Then
\begin{equation}
K_s(z,w)= \sum_{n=1}^{+\infty}
    \mathbb{E}_\omega\left[K_{s-\tau_n(\omega)}^D(z_n(\omega),w)\Bigg|\tau_n<s\right]P_\omega(\tau_n<s)\,,
\label{e:variant-PetersMagic}
\end{equation}
where
\begin{equation*}
K^D_s=K^{Dir(\ball_{\frac12 R_w}(w))}_s\,.
\end{equation*}
Moreover there exists an $M=M(c_{\min},c_{\max})$ such that
\begin{equation}
P(\tau_n<s)\lesssim_{d,M,c_{\min},c_{\max}}
\exp\{-(n-1)\left(\frac{R_w}{8}\right)^2(2Ms)^{-1} -{\left(\frac{R_z}8\right)^2}(2Ms)^{-1} \}\,.
\label{e:variant-Ptaunlessthant}
\end{equation}
\label{p:variant-LocalToGlobalHeatKernel}
\end{proposition}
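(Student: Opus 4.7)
The approach is to adapt the proof of Proposition \ref{p:LocalToGlobalHeatKernel} to the case in which Brownian motion starts \emph{outside} the localization ball around $w$. The key structural change is that, since $z\notin\ball_{R_w}(w)$ and in particular $z\notin\ball(w,\frac12 R_w)$, a Brownian path started at $z$ and killed on exiting $\ball(w,\frac12 R_w)$ has zero density inside that ball; hence the direct Dirichlet term $K_s^D(z,w)$ that appeared as the leading summand of \eqref{e:PetersMagic} vanishes in the variant, and the decomposition \eqref{e:variant-PetersMagic} begins at $n=1$.

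To prove \eqref{e:variant-PetersMagic} I would run the same ``last entry'' argument used in the proof of Proposition \ref{p:LocalToGlobalHeatKernel}. For $y$ near $w$ and $\epsilon$ small, expand $P^\Omega(B_\omega^z(s)\in\ball_\epsilon(y))$ by partitioning on the largest $n$ with $\tau_n<s$. At each step the strong Markov property applied at $\tau_n$, together with the definition of $\tau_{n+1}$, identifies the contribution of $\{\tau_n<s\le\tau_{n+1}\}$ with $\int_{\ball_\epsilon(y)}\E_\omega[K_{s-\tau_n}^D(z_n(\omega),y')\mid\tau_n<s]\,dy'\cdot P(\tau_n<s)$, after which dividing by $|\ball_\epsilon(y)|$ and sending $\epsilon\to 0^+$ gives the claimed formula. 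The absence of an ``$n=0$'' term is exactly the vanishing observed in the previous paragraph.

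The probability bound \eqref{e:variant-Ptaunlessthant} is obtained from two kinds of factors chained through the telescoping identity $P(\tau_n<s)=P(\tau_1<s)\prod_{l=2}^n P(\tau_l<s\mid\tau_{l-1}<s,\dots,\tau_1<s)$. For the excursions with $n>1$, conditional on $\tau_{n-1}<s$ and the position $z_{n-1}\in\partial\ball(w,\frac34 R_w)$, the path must subsequently traverse an annular region of width $R_w/4$ between $\ball(w,\frac12 R_w)$ and $\ball(w,\frac34 R_w)$, so Lemma \ref{Ptauleqt} (applied as in \eqref{e:taunexpbound}) yields a conditional factor $\lesssim\exp\{-(R_w/8)^2/(2Ms)\}$. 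For the first hit $\tau_1$, the two hypotheses together force $|z-w|\ge\max(R_z,R_w)$, and a short case analysis shows that in either case $|z-w|-\frac34 R_w\ge R_z/4$: if $R_z\ge R_w$ then $|z-w|-\frac34 R_w\ge R_z-\frac34 R_z=R_z/4$, whereas if $R_w> R_z$ then $|z-w|-\frac34 R_w\ge R_w-\frac34 R_w= R_w/4>R_z/4$. Hence $\ball(z,R_z/4)\subseteq\Omega\setminus\ball(w,\frac34 R_w)$, and Lemma \ref{Ptauleqt} applied to this ball gives $P(\tau_1<s)\lesssim\exp\{-(R_z/8)^2/(2Ms)\}$. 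Combining yields \eqref{e:variant-Ptaunlessthant}.

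The main new obstacle is precisely the first-hit estimate: in Proposition \ref{p:LocalToGlobalHeatKernel} the Brownian motion starts at the center of the localization ball, so $\tau_1$ is already an excursion time and the lemma applies uniformly. Here one instead needs the factor $\exp\{-(R_z/8)^2/(2Ms)\}$ to come from a true hitting time, and I expect the small case analysis above---which is what extracts $R_z$ rather than $R_w$ from the two hypotheses---to be the only genuinely new ingredient in the argument; the rest is a cosmetic modification of the proof of Proposition \ref{p:LocalToGlobalHeatKernel}.
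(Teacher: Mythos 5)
Your proposal is correct and takes exactly the approach the paper intends: the authors do not spell out a proof of Proposition \ref{p:variant-LocalToGlobalHeatKernel}, writing only that it ``is along the same lines as that of Proposition \ref{p:LocalToGlobalHeatKernel},'' and your reconstruction is the natural way to make that statement precise. You correctly identify the one genuinely new ingredient: since $z$ lies outside the localization ball around $w$, the direct Dirichlet summand of \eqref{e:PetersMagic} vanishes, and the $n=1$ term now carries a first-hit estimate rather than an excursion estimate. Your short case analysis extracting $|z-w|-\tfrac34 R_w\ge R_z/4$ from the two hypotheses $z\notin\ball_{R_w}(w)$ and $w\notin\ball_{R_z}(z)$, followed by Lemma \ref{Ptauleqt} applied to the exit time from $\ball(z,R_z/4)$, is exactly what produces the $\exp\{-(R_z/8)^2(2Ms)^{-1}\}$ factor (your bound with $(R_z/4)^2$ in the exponent is in fact slightly stronger than the stated one, which is fine). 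The remaining $n>1$ factors then come from the same excursion-width argument via the strong Markov property and the telescoping product, as in \eqref{e:taunexpbound}.
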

The proof of this Proposition is along the same lines as that of Proposition \ref{p:LocalToGlobalHeatKernel}.
%


\begin{proof}[Proof of Theorem \ref{t:heatkernelmapping} for $g\in\mathcal{C}^\alpha(\mathcal{M})$ with $\vol{\mathcal{M}}\leq\infty$]
Consider a sequence of metrics $\{g_k\}\subseteq\mathcal{C}^2(\mathcal{M})$, with increasing compact supports $\{\mathcal{M}_k\}$, converging to $g$ in $\mathcal{C}^\alpha$
(and therefore bounded in $\mathcal{C}^\alpha$),  and such that $g_k$ is uniformly elliptic with constants $\frac12c_{\min},2c_{\max}$ (which is possible
since $c_{\min}$ and $c_{\max}$ are continuous functions of the components of the metric tensor).
Let $K_k$ be the heat kernel associated with $g_k$. Note that for this heat kernel and its gradient we have bounds, from above 
with
constants uniform in $k$ for any fixed compact $\mathcal{E}$ away from $\partial \mathcal{M}$.
We proceed as in the proof of Theorem II.3.1 in \cite{Stroock:DiffusionSemigroups}.
The key ingredients are uniform (in $k$, for a fixed compact) upper bounds on $K_k$ (which follow from Propositions \ref{p:LocalToGlobalHeatKernel} and \ref{p:variant-LocalToGlobalHeatKernel}), and that $\{K_k\}$ is equicontinuous, which follows from the uniform upper bounds on the gradient of $K_k$
(for a fixed compact we have uniform lower bounds on $R_z$ and $R_w$ and estimate \eqref{e:variant-Ptaunlessthant}).
It could also made
follow from Stroock's paper (Nash-Moser estimates that say the $K_k$ is H\"older of order and with constants depending only the ellipticity constants)).
The proof of Theorem II.3.1 in \cite{Stroock:DiffusionSemigroups} then implies that $K_k\rightarrow K_\mathcal{M}$ as $k\rightarrow+\infty$, uniformly on compacts.
Therefore the uniform (in $k$) bi-Lipschitz bounds on the map
$$x\to (K_{k,t}(x,y_1),...,K_{k,t}(x,y_d))$$
on $\localTangentBall$, imply the same bounds for
$$x\to (K_{\mathcal{M},t}(x,y_1),...,K_{\mathcal{M},t}(x,y_d))\,.$$
\end{proof}

\section{Examples}
\label{s:examples}

\subsection{Localized eigenfunctions}\label{s:localized-ef}

\begin{figure}[h]
\centering
\scalebox{0.5}{\includegraphics*[0in,0in][8in,8in]{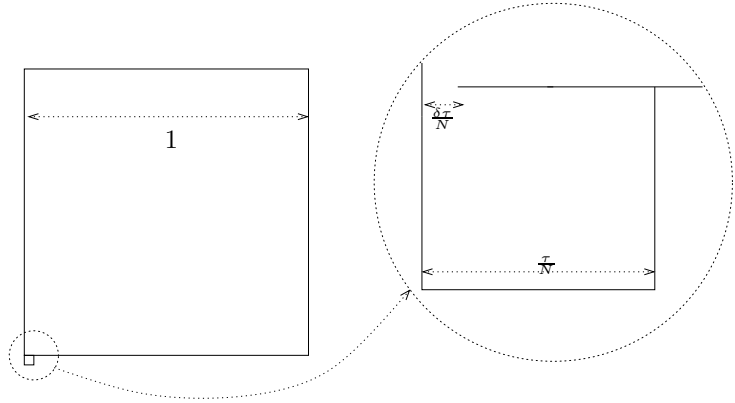}}
\put(-230,95){$1$}
\put(-90,50){\tiny $\frac{\tau}{N}$}
\put(-130,105){\tiny $\frac{\delta\tau}{N}$}
\caption{Example of localization}
\label{f:example-localization}
\end{figure}

The following example shows that the factors $\gamma_1,\dots,\gamma_d$ in Theorems \ref{t:datheorem} and \ref{t:datheoremmanifold} may in fact be required to be as small as
$R_z^{\frac d2}$.

Let $\tau$ below be the golden ratio.
Consider the domain $\Omega_\delta$ as in Figure \ref{f:example-localization}, with Dirichlet boundary conditions.
We will let $z$ to be the center of the small square.
Let $\globalEV_j^\delta$ and $\globalEF_j^\delta$ be the eigenvalues and eigenfunctions on $\Omega_\delta$.
Fix $A>CN$.
Let $\Lambda^\delta_A=\{j:\globalEV_j^\delta\le A^2\}$.
The cardinality of $\Lambda^\delta_A$ is uniformly bounded above by Weyl Lemma.
It is also bounded below, since in this case we can easily obtain a reverse Weyl Lemma. To see this, let
$(\partial\Omega_\delta)_A=\{x\in\Omega: d(x,\partial\Omega)\le a/A\}$, so that the heat kernel estimates in Lemma \ref{l:truncated_kernel} hold for
$t=b A^{-2}$, and observe that
\begin{equation*}
\begin{aligned}
\#\{j:\lambda_j^\delta\le A^2\}&\ge e^{+b}\int_{\Omega_\delta\setminus(\partial\Omega_\delta)_A} \sum_{\lambda_j^\delta\le A} |\globalEF_j^\delta(x)|^2e^{-\lambda_j^\delta t}\\
&\ge e^{+b}\left(\int_{\Omega_\delta\setminus(\partial\Omega_\delta)_A} K_t^\delta(x,x)-\int_{\Omega_\delta\setminus(\partial\Omega_\delta)_A} K_{t/2}^\delta(x,x)e^{-\frac{A^2t}2}\right)\\
&\gtrsim e^{+b} \left(\frac12-\frac aA\right)^2\left( C_1t^{-1}-C_2 e^{-\frac b2}t^{-1}\right)\gtrsim
e^{+b} b^{-1} A^2
\end{aligned}
\end{equation*}
where we choose $b$ so that the last inequality holds, and thus determine $a$ and $A$, which are chosen so that Lemma \ref{l:truncated_kernel} and Proposition \ref{p:non-smooth-kernel_estimates} hold.

Fix $j=1$. The sequence $\{\globalEV_j^\delta\}_{\delta>0}$ is bounded and hence the families $\{\globalEF_j^\delta\}_{\delta>0}$ and $\{\grad\globalEF_{j}^\delta\}_{\delta\ge0}$ are equicontinuous  (by
Proposition \ref{p:PhiLinftyEstimates}), therefore there exists a sequence $\delta_k\rightarrow0$ such
that $\globalEF_j^{\delta_k}\rightarrow\varphi_j$, $\grad\globalEF_j^{\delta_k}\rightarrow\grad\varphi_j$ and $\globalEV_j^{\delta_k}\rightarrow\globalEV_j$.
We can repeat this argument for any $j\le \liminf_k \#\{j:\lambda_j^{\delta_k}\le A^2\}:=j_{\max}(A)$, which is strictly positive and tending to $+\infty$ as $A\rightarrow+\infty$,
by the above.
By a diagonal argument, we can find a subsequence $\delta_l$ such that for any $j\le j_{\max}(A)$,
$\globalEF_j^{\delta_l}\rightarrow\varphi_j$, $\grad\globalEF_j^{\delta_l}\rightarrow\grad\varphi_j$.
Let us look at some properties of $\varphi_j$.
Clearly, $\varphi_j$ is an eigenfunction for $\Delta$ with Dirichlet boundary conditions on $\Omega_0$.
Since $(\frac{\tau}N n_1)^2+(\frac{\tau}N n_2)^2$ is irrational for any $n_1,n_2\in\mathbb{Z}$, every $\varphi_j$ is supported in either the small square, or the big square.
Recall that $z$ is the center of the small square. For any $j\le j_{\max}(A)$ if $\varphi_j$ has support in $S_0$ then $||\grad\varphi_j||\gtrsim (\tau/N)^{-2}$.
Let $\delta_l$ be small enough so that $||\grad\varphi_j^{\delta_l}||\gtrsim (\tau/N)^{-2}$, for all $j\le j_{\max}(A)$.
By choosing $A$ larger than $c_5(\tau/{2N})$, where $c_5$ is as in Theorem \ref{t:datheorem}, all possible eigenfunctions that may get chosen in
the Theorem will correspond to $j\le j_{\max}(A)$, and therefore the lower bound for the $\gamma_i$ is sharp.

See \url{http://pmc.polytechnique.fr/pagesperso/dg/recherche/localization_e.htm} for nice demonstrations of the above example.

\subsection{Non-simply connected domain}

\begin{figure}[h!]
\begin{minipage}[t]{\textwidth}
\includegraphics[width=0.32\textwidth,height=0.4\textwidth]{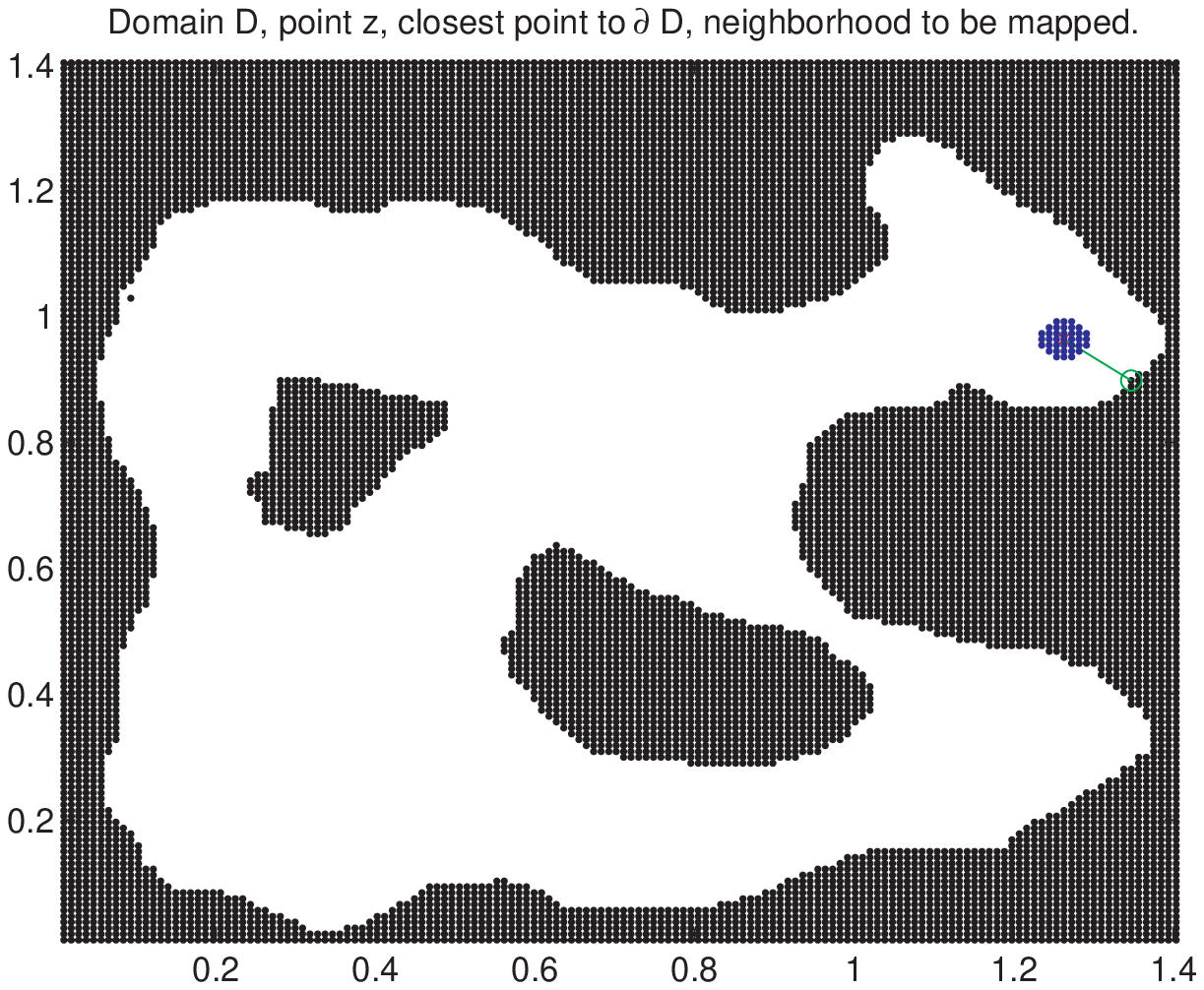}
\includegraphics[width=0.64\textwidth,height=0.4\textwidth]{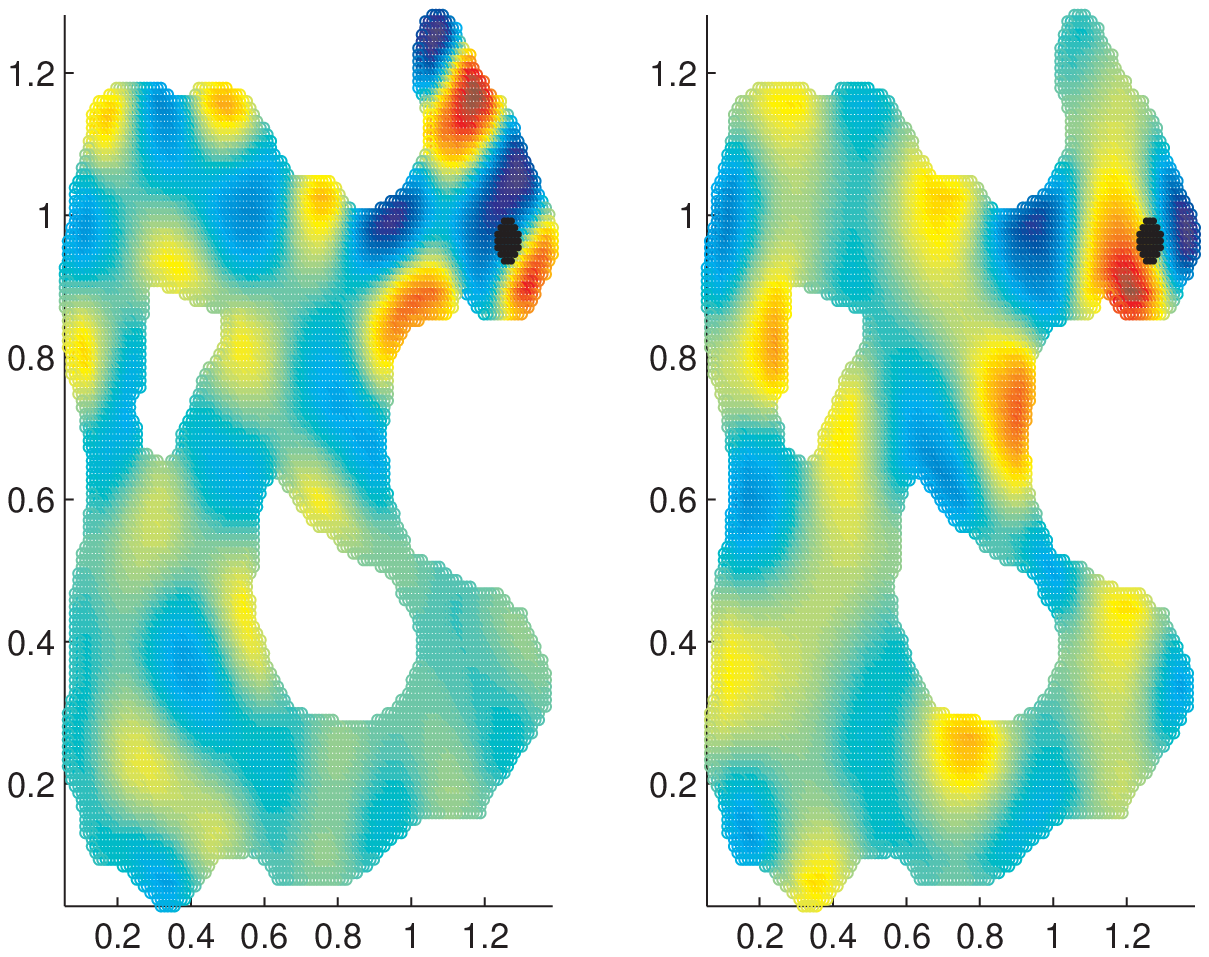}
\end{minipage}
\caption{Top left: a non-simply connected domain in $\mathbb{R}^2$, and the point $z$ with its neighborhood to be mapped. Top right: the image of the neighborhood under the map.
Bottom: Two eigenfunctions for mapping.}
\end{figure}

%
\newpage

\section{Appendix}\label{appendix-table-notation}
\subsection{Table of Notation/Symbols}\label{appendix_notation-table}
Below is a table of notation/symbols.  For each element, we list the first time it is defined mentioned,  repeat the definition if it is short, or explain what the symbol is typically used for.

\medskip

\begin{tabular}{l@{\extracolsep{.6in}}l}
{\bf Symbol}& {\bf Location}\\
\\
$A,\ A'$ & see \eqref{d:Lambda-L-H-definitions}    \\
$b_1$ & see Proposition \ref{p:PhiLinftyEstimates}\\
$\beta$, $\efk$ & see Proposition \ref{p:PhiLinftyEstimates} or Lemma \ref{lemma-ef-bound} \\
$\ball_r(x)$, $\ball(x,r)$ & open or closed metric ball of radius $r$ around the point $x$. \\
$c_0$ & see \eqref{d:Lambda-E-definitions}\\
$c_1,\ c_2,\ c_3,\ c_4,\ c_5,\ c_6$ & see Theorem \ref{t:datheorem} or \ref{t:datheoremmanifold}   \\
$C_1,\ C_2, C_1', C_2'$ & see Proposition \ref{p:non-smooth-kernel_estimates}   \\
$C_3,\ C_4$  & see Lemma \ref{l:truncated_kernel}   \\
$C_5,\ C_6$  & see Lemma \ref{l:truncated_truncated_kernel}   \\
$C_7,C_8$  & see Lemma \ref{l:bigpartials}   \\
$C_9$ &    see Proposition \ref{p:non-smooth-kernel_estimates} or \ref{p:non-smooth-kernel_estimates}\\
$c_{\max},\ c_{\min}$ & see \eqref{e:uniformlyelliptic}    \\
$\Cweyl$ & see \eqref{e:weyl-bounds-omega} \\
$\delta_0,\ \delta_1$ & preceding Proposition \ref{p:non-smooth-kernel_estimates}     \\
$d$ &  the dimension of the domain or manifold  \\
$\Delta$ &Euclidean Laplacian\\
$\Delta^\justB$ &Euclidean Laplacian with Dirichlet boundary\\
&conditions on a ball of radius $R$\\
$ \tilde\Delta^\justB$ & Laplace Beltrami operator  with Dirichlet boundary\\
& conditions on a ball of radius $R$\\
$\Delta_\mathcal{M}$ & Laplace Beltrami operator on $\mathcal{M}$\\
$\gamma_1,...,\gamma_d$ &  see Theorem \ref{t:datheorem} or \ref{t:datheoremmanifold} as well as equation \eqref{e:gamma-def}  \\
$\delta^{ij},\ g^{ij}$ & Euclidean and non-Euclidean metric tensor \\
$g_r(\cdot,\cdot)$ & see equation \eqref{e:gquadratic}\\
$G^r,\ \tilde  G^r$ &   Green function for the Euclidean or non-Euclidean\\
& ball of radius $r$(center is some fixed point).  In many\\
& place $r$ is $R$ which is assumed to have value $1$.\\
$\localEV,\ \localEF $ & eigenvalue and eigenfunction of a Euclidean ball\\
& (usually $\localTangentBall$ with $R=1$)     \\
$ \localPEV,\ \localPEF $ & eigenvalue and eigenfunction of a non-Euclidean ball \\
&(usually $\localTangentBall$ with $R=1$)     \\
$\globalEV,\ \globalEF$ &  eigenvalue and eigenfunction of the whole (Euclidean)\\
&  domain or manifold  \\
$\globalPEV,\ \globalPEF$ &  eigenvalue and eigenfunction of a the whole\\
&  domain or manifold,  with a perturbed metric\\
$\Lambda_L,\ \Lambda_H\ \Lambda_E$ & see \eqref{d:Lambda-L-H-definitions}, \eqref{d:Lambda-L-H-definitions} and    \eqref{d:Lambda-E-definitions}\\
$\Omega, \ \mathcal{M}$ &  a domain or a manifold  \\
$\efpoly, \ \gefpoly, \ \ggefpoly$&  see Proposition \ref{p:PhiLinftyEstimates}\\
$K^{Dir(B)},\ K^B$ &  heat kernel with Dirichlet boundary conditions on $B$,\\
& context dependent   \\
$K^{r}$ &  heat kernel with Dirichlet boundary conditions on a ball of radius $r$\\
&(with some fixed center),  context dependent   \\
$K^{\mathcal{N}}$ &heat kernel with Neumann boundary conditions,\\
& context dependent     \\
$K$ &   heat kernel, context dependent \\
$\partial_\dir K$ &   partial with respect to the second variable of a heat kernel\\
$\rho$ & see Theorem \ref{t:datheorem} and Theorem  \ref{t:datheoremmanifold}   \\
$R_z$ & written for $\rho$, as in  Theorem \ref{t:datheorem} and Theorem  \ref{t:datheoremmanifold}   \\
$\injRad$ &   see equation \eqref{e:rM} \\
$s$ &a time (for the heat kernel), usually smaller than $t$ (see\\
& below)    \\
$t$ &  a time (for the heat kernel), usually comparable to the\\
& square of the radius of a ball being discussed \\
$$ &    \\
$$ &    \\
$$ &    \\
$$ &    \\
$$ &    \\
\end{tabular}

\bibliographystyle{plain}
\bibliography{../../bibliography/MyPublications,../../bibliography/DiffusionBib,../../bibliography/bib-file-1}

\end{document}